\documentclass[11pt]{amsart}
\usepackage{amsmath}
\usepackage{amsfonts,amssymb}
\usepackage{graphicx}
\usepackage{enumerate}
\usepackage{amsthm}
\usepackage[all]{xy}
\usepackage{color}
\usepackage{lmodern}
\usepackage{shuffle}
\usepackage{lmodern}
\usepackage{hyperref}
\usepackage{rotating} 
\usepackage{scalefnt}
\usepackage{axodraw}
\usepackage{xspace}
\usepackage{bm}
 
 \font\sevenrm=cmr7

\usepackage{a4wide}

\newtheorem{theorem}{Theorem}[section]
\newtheorem {lemma}[theorem]{Lemma}
\newtheorem {proposition}[theorem]{Proposition}
\newtheorem {definition}[theorem]{Definition}
\newtheorem {corollary}[theorem]{Corollary}

\newtheorem {remark}[theorem]{Remark}

\setlength{\textheight}{8.8in}
\setlength{\topmargin}{-15pt}
\setlength{\textwidth}{7.3in}
\setlength{\oddsidemargin}{-30pt}
\setlength{\evensidemargin}{-30pt}

\newcommand{\nc}{\newcommand}
\nc{\ignore}[1]{\relax}
\nc{\mrm}[1]{{\rm #1}}
\nc{\dirlim}{\displaystyle{\lim_{\longrightarrow}}\,}
\nc{\invlim}{\displaystyle{\lim_{\longleftarrow}}\,}
\nc{\vep}{\varepsilon} \nc{\ep}{\epsilon}
\nc{\sigmat}{\widetilde\sigma}
\nc{\ostar}{\overline{*}}

\nc{\mchar}{\mrm{Char}}
\nc{\Hom}{\mrm{Hom}}
\nc{\id}{\mrm{id}}


 \nc{\grad}[1]{^{({#1})}}
 \nc{\fil}[1]{_{#1}}

\nc{\BA}{{\Bbb A}} \nc{\CC}{{\Bbb C}} \nc{\DD}{{\Bbb D}}
\nc{\EE}{{\Bbb E}} \nc{\FF}{{\Bbb F}} \nc{\GG}{{\Bbb G}}
\nc{\HH}{{\Bbb H}} \nc{\LL}{{\Bbb L}} \nc{\NN}{{\Bbb N}}
\nc{\PP}{{\Bbb P}} \nc{\QQ}{{\Bbb Q}} \nc{\RR}{{\Bbb R}}
\nc{\TT}{{\Bbb T}} \nc{\VV}{{\Bbb V}} \nc{\ZZ}{{\Bbb Z}}
\nc{\Cal}[1]{{\mathcal {#1}}}
\nc{\mop}[1]{\mathop{\hbox {\rm #1} }}
\nc{\smop}[1]{\mathop{\hbox {\sevenrm #1} }}
\nc{\mopl}[1]{\mathop{\hbox {\rm #1} }\limits}
\nc{\frakg}{{\mathfrak g}}
\nc{\g}[1]{{\mathfrak {#1}}}
\nc{\wt}{\widetilde}
\nc{\wh}{\widehat}
\nc{\un}{\hbox{\bf 1}}
\nc{\redtext}[1]{\textcolor{red}{#1}}
\nc{\bluetext}[1]{\textcolor{blue}{#1}}

\nc\fleche[1]{\mathop{\hbox to #1 mm{\rightarrowfill}}\limits}
\def\semi{\mathrel{\times}\kern -.85pt\joinrel\mathrel{\raise
    1.4pt\hbox{${\scriptscriptstyle |}$}}}

\def\shu{\joinrel{\!\scriptstyle\amalg\hskip -3.1pt\amalg}\,}
\def\sshu{\joinrel{\,\scriptscriptstyle\amalg\hskip -2.5pt\amalg}\,}
\def\fleche#1{\mathop{\hbox to #1 mm{\rightarrowfill}}\limits}
\def\gfleche#1{\mathop{\hbox to #1 mm{\leftarrowfill}}\limits}
\def\inj#1{\mathop{\hbox to #1 mm{$\lhook\joinrel$\rightarrowfill}}\limits}
\def\ginj#1{\mathop{\hbox to #1 mm{\leftarrowfill$\joinrel\rhook$}}\limits}
\def\surj#1{\mathop{\hbox to #1 mm{\rightarrowfill\hskip 2pt\llap{$\rightarrow$}}}\limits}
\def\gsurj#1{\mathop{\hbox to #1 mm{\rlap{$\leftarrow$}\hskip 2pt
      \leftarrowfill}}\limits}
\def \restr#1{\mathstrut_{\textstyle |}\raise-6pt\hbox{$\scriptstyle #1$}}
\def \srestr#1{\mathstrut_{\scriptstyle |}\hbox to
-1.5pt{}\raise-4pt\hbox{$\scriptscriptstyle #1$}}

\newcommand{\lbutcher}{{\raise 3.9pt\hbox{$\circ$}}\hskip -1.9pt{\scriptstyle \searrow}}
\def\qshu{\!\joinrel{\!\scriptstyle\amalg\hskip -3.1pt\amalg}\,\hskip -8.2pt\hbox{-}\hskip 4pt}
\def\sqshu{\joinrel{\scriptscriptstyle\amalg\hskip -2.5pt\amalg}\,\hskip -7pt\hbox{-}\hskip 4pt}
\def\cto{\joinrel{\hbox{$\nearrow$}\hskip -3.65mm\raise 2pt\hbox{$\nearrow$}}}
\def\scto{\joinrel{\scalebox {0.8}{\hbox{$\nearrow$}}\hskip -2.54mm{\scalebox{0.8}{\raise 2pt\hbox{$\nearrow$}}}}}




\def\racine{{\scalebox{0.3}{
\begin{picture}(12,12)(38,-29)
\SetWidth{0.5} \SetColor{Black} \Vertex(45,-18){5.66}
\end{picture}}}}
  
 \def\arbrea{\,{\scalebox{0.15}{ 
  \begin{picture}(8,55) (370,-248)
    \SetWidth{2}
    \SetColor{Black}
    \Line(374,-244)(374,-200)
    \Vertex(374,-197){9}
    \Vertex(375,-245){12}
  \end{picture}
}}\,}

 \def\arbreba{\,{\scalebox{0.15}{ 
\begin{picture}(8,106) (370,-197)
    \SetWidth{2}
    \SetColor{Black}
    \Line(374,-193)(374,-149)
    \Vertex(374,-146){9}
    \Vertex(375,-194){12}
    \Line(374,-142)(374,-98)
    \Vertex(374,-95){9}
  \end{picture}
}}\,}

 \def\arbrebb{\,{\scalebox{0.15}{ 
  \begin{picture}(48,48) (349,-255)
    \SetWidth{2}
    \SetColor{Black}
    \Vertex(375,-252){12}
    \Line(376,-250)(395,-215)
    \Line(373,-251)(354,-214)
    \Vertex(353,-211){9}
    \Vertex(395,-213){9}
  \end{picture}
}}}

\def\arbreca{\,{\scalebox{0.15}{
\begin{picture}(8,156) (370,-147)
    \SetWidth{2}
    \SetColor{Black}
    \Line(374,-143)(374,-99)
    \Vertex(374,-96){9}
    \Vertex(375,-144){12}
    \Line(374,-92)(374,-48)
    \Vertex(374,-45){9}
    \Line(374,-42)(374,2)
    \Vertex(374,5){9}
  \end{picture}
}}\,}

\def\arbrecb{\,{\scalebox{0.15}{
\begin{picture}(48,94) (349,-255)
\SetWidth{2}
\SetColor{Black}
\Line(376,-204)(395,-169)
\Line(373,-205)(354,-168)
\Vertex(353,-165){9}
\Vertex(395,-167){9}
\Vertex(374,-205){9}
\Line(374,-246)(374,-209)
\Vertex(374,-252){12}
\end{picture}}}\,}

\def\arbrecc{\,{\scalebox{0.15}{
 \begin{picture}(48,98) (349,-205)
    \SetWidth{2}
    \SetColor{Black}
    \Vertex(375,-202){12}
    \Line(376,-200)(395,-165)
    \Line(373,-201)(354,-164)
    \Vertex(353,-161){9}
    \Vertex(395,-163){9}
    \Line(353,-160)(353,-113)
    \Vertex(353,-111){9}
  \end{picture}
}}\,}

\def\arbreccc{\,{\scalebox{0.15}{
 \begin{picture}(48,98) (349,-205)
    \SetWidth{2}
    \SetColor{Black}
    \Vertex(375,-202){12}
    \Line(376,-200)(395,-165)
    \Line(373,-201)(354,-164)
    \Vertex(353,-161){9}
    \Vertex(395,-163){9}
    \Line(395,-160)(395,-113)
    \Vertex(395,-111){9}
  \end{picture}
}}\,}

\def\arbrecd{\,{\scalebox{0.15}{
\begin{picture}(48,52) (349,-251)
    \SetWidth{2}
    \SetColor{Black}
    \Vertex(375,-248){12}
    \Line(376,-246)(395,-211)
    \Line(373,-247)(354,-210)
    \Vertex(353,-207){9}
    \Vertex(395,-209){9}
    \Line(375,-247)(375,-206)
    \Vertex(376,-203){9}
  \end{picture}
 }}\,}

 \def\arbreadec #1#2{\,{\scalebox{0.35}{ 
  \begin{picture}(8,55) (370,-248)
    \SetWidth{2}
    \SetColor{Black}
    \Line(374,-244)(374,-210)
    \Vertex(374,-210){5}
    \Vertex(374,-245){5}
     \Text(385,-240)[lb]{\Huge{\Black{$#1$}}}
    \Text(385,-205)[lb]{\Huge{\Black{$#2$}}}
  \end{picture}
}}\,}

\def\arbrebbLab{\,{\scalebox{0.32}{
  \begin{picture}(48,48) (349,-255)
    \SetWidth{2}
    \SetColor{Black}
    \Vertex(375,-252){8}
    \Line(376,-250)(395,-215)
    \Line(373,-251)(354,-214)
    \Vertex(353,-211){6}
    \Vertex(395,-211){6}
    \Text(370,-285)[lb]{\Huge{\Black{$u$}}}
    \Text(320,-215)[lb]{\Huge{\Black{$w$}}}
    \Text(415,-215)[lb]{\Huge{\Black{$v$}}}
  \end{picture}
}}}

\def\arbreebz{\,{\scalebox{0.25}{
\begin{picture}(90,120) (350,-275)
    \SetWidth{2}
    \SetColor{Black}
    \Vertex(400,-290){12}
    \Line(470,-130)(470,-210)
    \Vertex(470,-130){9}
    \Line(398,-299)(330,-210)
    \Vertex(330,-210){9}
    \Line(320,-205)(270,-130)
    \Vertex(270,-130){9}
    \Line(400,-299)(470,-210)
    \Vertex(470,-210){9}
    \Line(335,-210)(370,-130)
    \Vertex(370,-130){9}
    \Line(475,-210)(520,-130)
    \Vertex(520,-130){9}
    \Line(465,-210)(420,-130)
    \Vertex(420,-130){9}
    \Line(520,-130)(520,-40)
    \Vertex(520,-40){9}
    \Line(270,-130)(270,-40)
    \Vertex(270,-40){9}
    \Text(397,-325)[lb]{\Huge{\Black{$1$}}}
    \Text(465,-245)[lb]{\Huge{\Black{$2$}}}
    \Text(325,-245)[lb]{\Huge{\Black{$7$}}}
    \Text(250,-120)[lb]{\Huge{\Black{$9$}}}
    \Text(465,-115)[lb]{\Huge{\Black{$5$}}}
    \Text(365,-115)[lb]{\Huge{\Black{$8$}}}
    \Text(530,-120)[lb]{\Huge{\Black{$3$}}}
    \Text(415,-115)[lb]{\Huge{\Black{$6$}}}
    \Text(515,-25)[lb]{\Huge{\Black{$4$}}}
    \Text(260,-25)[lb]{\Huge{\Black{$10$}}}
  \end{picture}}}\,}


\def\bx{\mathbb X}
\def\diagramme #1{\vskip 4mm \centerline {#1} \vskip 4mm}

\begin{document}


\title[Rough differential equations on homogeneous Spaces]{Planarly branched Rough Paths and\\ rough differential equations on Homogeneous Spaces}


\author[C.~Curry]{C.~Curry}
\address{Dept.~of Mathematical Sciences, 
		Norwegian University of Science and Technology (NTNU), 
		7491 Trondheim, Norway.}
\email{charles.curry@ntnu.no}

\author[K.~Ebrahimi-Fard]{K.~Ebrahimi-Fard}
\address{Dept.~of Mathematical Sciences, 
		Norwegian University of Science and Technology (NTNU),
		7491 Trondheim, Norway.}
         \email{kurusch.ebrahimi-fard@ntnu.no}         
         \urladdr{https://folk.ntnu.no/kurusche/}

\author[D.~Manchon]{D.~Manchon}
\address{Univ. Blaise Pascal,
         	C.N.R.S.-UMR 6620,
         	63177 Aubi\`ere, France}       
         \email{manchon@math.univ-bpclermont.fr}
         \urladdr{http://math.univ-bpclermont.fr/~manchon/}

\author[H.~Z.~Munthe-Kaas]{H.~Z.~Munthe-Kaas}
\address{Dept.~of Mathematics,
         	University of Bergen,
         	Postbox 7800,
         	N-5020 Bergen, Norway}       
         \email{hans.munthe-kaas@uib.no}
         \urladdr{http://hans.munthe-kaas.no}
         
\begin{abstract}         
This work studies rough differential equations (RDEs) on homogeneous spaces. We provide an explicit expansion of the solution at each point of the real line using decorated planar forests. The notion of planarly branched rough path is developed, following Gubinelli's branched rough paths. The main difference being the replacement of the Butcher--Connes--Kreimer Hopf algebra of non-planar rooted trees by the Munthe-Kaas--Wright Hopf algebra of planar rooted forests. The latter underlies the extension of Butcher's $B$-series to Lie--Butcher series known in Lie group integration theory. Planarly branched rough paths admit the study of RDEs on homogeneous spaces, the same way Gubinelli's branched rough paths are used for RDEs on finite-dimensional vector spaces. An analogue of Lyons' extension theorem is proven. Under analyticity assumptions on the coefficients and when the H\"older index of the driving path is one, we show convergence of the planar forest expansion in a small time interval.         
\end{abstract}

%

\maketitle


\noindent {\footnotesize{\bf Keywords}: rough paths; rough differential equations; homogeneous spaces; Lie--Butcher series; Hopf algebras; post-Lie algebras; planar rooted forests.}

\smallskip
\noindent {\footnotesize{\bf MSC Classification}: (Primary) 16T05; 16T15; 34A34 (Secondary) 16T30; 60H10.}


\tableofcontents


\section{Introduction}
\label{sec:intro}

Given a set of vector fields $\{f_i\}_{i=1}^d$ on some $n$-dimensional smooth manifold $\Cal M$, we are interested in the controlled differential equation:
\begin{align}\label{eq:control1}
	dY_{st} &= \sum_{i=1}^d f_i(Y_{st}) dX_t^i,
\end{align}
with initial condition $Y_{ss} =y$, where the controls $t\mapsto X_t^i$ are differentiable, or even only H\"older-continuous real-valued functions\footnote{We have chosen any real $s$ as initial time rather than zero, whence the two-variable notation. Derivation is always understood with respect to the variable $t$, the first variable $s$ remaining inert. The two positive integers $n$ and $d$ are a priori unrelated.}. When $\Cal M$ is an affine space $\mathbb R^n$, rough path theory on $\mathbb R^d$, together with its branched version introduced by M.~Gubinelli \cite{Gubi2010}, is the correct setting to express the solutions of \eqref{eq:control1} when the controls are not differentiable. An important case of the latter situation is given by Brownian motion on $\mathbb R^d$, of which sample paths are almost surely nowhere differentiable\footnote{Brownian motion is almost surely of H\"older regularity $\gamma$ for any $\gamma <1/2$.}. The existence of a solution in a small interval around the point $s$ has been proven by Gubinelli, using the notion of controlled path in the branched setting \cite[Section 8]{Gubi2010} (see also  \cite[Section 3]{HaiKel2015}). The Taylor expansion of such a solution at any point is expressed by means of choosing a branched rough path $\mathbb X$ over the driving path $X=(X^1,\ldots, X^d)$. See, for example, the introduction of reference \cite{HaiKel2015} by M.~Hairer and D.~Kelly for a concise account.

\medskip

The theory of rough paths was introduced and developed by T.~Lyons' \cite{L98}. It is based on Chen's theory of iterated integrals \cite{Chen77} and provides an integration theory for solving differential equations driven by irregular signals. The intuitive idea of prescribing the path together with its iterated integrals is encapsulated by the definition of a rough path as a two-parameter family of Hopf algebra characters of the shuffle Hopf algebra $\Cal H_{{\shuffle}}^A$ over the finite alphabet $A=\{a_1,\ldots,a_d\}$, subject to precise estimates as well as to Chen's lemma. The latter is a lifting of the chain rule for integration. Gubinelli's branched rough paths are based on J.~Butcher's $B$-series from numerical integration theory, and are defined similarly to Lyons' rough paths, with the exception that the Hopf algebra at hand is the Butcher--Connes--Kreimer Hopf algebra $\Cal H_{\smop{BCK}}^A$ of $A$-decorated non-planar rooted trees.

\medskip

In a first step, this article introduces and develops the theory of rough paths on $\mathbb R^d$ for any connected graded Hopf algebra fulfilling rather mild assumptions with respect to its combinatorics. An analogue of Lyons' extension theorem is proven (Theorem \ref{extension-generalized}), using the Sewing Lemma as in the classical case (Proposition \ref{prop:fdpsewing}). In particular, following Gubinelli's approach we use the notion of Lie--Butcher series from Lie group integration theory to define \textsl{planarly branched rough paths} on $\mathbb R^d$ as rough paths in that generalised sense, for which the Hopf algebra at hand is the Hopf algebra of Lie group integrators $\Cal H_{\smop{MKW}}^A$ introduced in \cite{MunWri2008} by W.~Wright and one of the current authors. In a nutshell, this combinatorial Hopf algebra is 
linearly spanned by planar ordered rooted forests, possibly with decorations on the vertices. The product in this commutative Hopf algebra is the shuffle product of the forests, which are considered as words with planar rooted trees as letters. The coproduct is based on the notion of \textsl{left admissible cuts} on forests. We then argue that planarly branched rough paths provide the correct setting for understanding controlled differential equations on a homogeneous space, i.e., a manifold acted upon transitively by a finite-dimensional Lie group. To be more precise, it provides the means to write the Taylor expansion of a solution at each time, particularly suited to the underlying geometric setting.

\medskip

We conclude with a first discussion of the analytic aspects of differential equations driven by planarly branched rough paths. In this article, we restrict to considering the convergence of the full Taylor series on a small time interval (Corollary \ref{Taylor-cv}). This necessarily assumes analyticity of the vector fields, and makes use of Cauchy estimates in a similar manner to \cite[Section 5]{Gubi2010}. On the other hand, this method is limited to considering driving paths for which the H\"older index $\gamma$ of the control path $X$ is equal to one (Lipschitz case). A much more promising approach is to consider instead truncations of the Taylor expansion with controlled remainder, following Davie \cite{D08}, see also \cite{CW17} for the extension of this method to Lie series for the pullback flow. The main obstacle to this technique is the lack of results showing that iterative applications of approximate flows can be concatenated to give an approximation of controlled error on a larger compact time interval. This is equivalent to the existence of global error estimates for Lie group integrators. Such results are established in the forthcoming work \cite{CS2018}, the ramifications of which will be explored in a future sequel on the existence and uniqueness of solutions under much less restrictive assumptions.\\

The paper is organised as follows: in Section \ref{sect:fs} we write down the Taylor expansion of the solution of \eqref{eq:control1} on a homogeneous space in the case of differentiable controls, using a Picard iteration. \textcolor{blue}{We then introduce} a suitable class of combinatorial Hopf algebras in Section \ref{sect:comb-fact}, defining a notion of factorial adapted to this general setting. Then we define in Section \ref{sect:H-rough} a functorial notion of $\gamma$-regular rough path associated to any combinatorial Hopf algebra in the above sense, and we prove Lyons' extension theorem in this setting, along the lines of reference \cite{FP2006}. After giving a brief account on Lie--Butcher theory in Section \ref{sec:flows}, we recall the Munthe-Kaas--Wright Hopf algebra $\Cal H_{\smop{MKW}}^A$ of Lie group integrators. We recall in Section \ref{sec:MKWHA} two relevant combinatorial notions associated with planar forests, namely three partial orders on the set of vertices \cite{A14}, and the planar forest factorial $\sigma\mapsto \sigma!$ of \cite{MF17}, which matches the general notion of factorial mentioned above. Planarly branched rough paths are then defined as rough paths associated to the particular Hopf algebra $\Cal H_{\smop{MKW}}^A$. Section \ref{sec:quotient} is devoted to a canonical surjective Hopf algebra morphism $\mathfrak a_\ll$ from $\Cal H_{\smop{MKW}}^A$ onto the shuffle Hopf algebra $\Cal H^A_{\sshu}$, the \textsl{planar arborification}, in the sense of J.~Ecalle's notion of arborification. A contracting version of planar arborification is also given, where the shuffle Hopf algebra is replaced by a quasi-shuffle Hopf algebra. Finally, Section \ref{sec:RDEHom} deals with rough differential equations on homogeneous spaces driven by a H\"older-continuous path $X$. Any planarly branched rough path above $X$ yields a corresponding formal solution. Following the lines of thought of \cite[Section 5]{Gubi2010} (see also \cite[Proposition 1.8]{BS2016}), we prove convergence of the planar forest expansion in a small interval at each time, under an appropriate analyticity assumption on the coefficients $f_i$, when the driving path is Lipschitz, i.e., of H\"older regularity $\gamma=1$. \textcolor{blue}{An account of the sewing lemma is given in the Appendix.}\\

\noindent\textbf{Acknowledgements:} We would like to thank Lorenzo Zambotti and Ilya Chevyrev for crucial discussions and comments which led to substantial improvements of this paper, in particular by pointing us to the recent article \cite{B17}. We also thank \textcolor{blue}{Igor Mencattini,} Alexander Schmeding and Rosa Preiss for helpful comments. The third author greatly acknowledges the warm hospitality and stimulating working conditions which he experienced at NTNU in Trondheim and at Bergen University during his visits in May 2017. He also would like to thank Fr\'ed\'eric Fauvet for illuminating discussions on J.~Ecalle's notion of arborification. \textcolor{blue}{Finally, we thank the referee for very pertinent suggestions and remarks.} The article received support from Campus France, PHC Aurora 40946NM.


\section{Formal series expansion of the solution}
\label{sect:fs}

The theory of numerical integration algorithms on Lie groups and manifolds \cite{Iserles00} has been developed over the last two decades. In this context new algebraic structures were revealed which combine Butcher's $B$-series \cite{HWL_02} and Lie-series into Lie--Butcher series on manifolds \cite{LMK_13}. Brouder's work \cite{Brouder_00} showed that Hopf and pre-Lie algebras of non-planar rooted trees provide the algebraic foundation of $B$-series. For Lie--Butcher series the new concepts of post-Lie algebras and the Munthe-Kaas--Wright Hopf algebra are the foundations. These are examples of algebraic combinatorial structures which arise naturally from the geometry of connections on homogenous spaces.

\smallskip

We rewrite the differential equation \eqref{eq:control1} in the following form:
\begin{equation}
\label{rde}
	dY_{st}=\sum_{i=1}^d \# f_i(Y_{st})\,dX_t^i
\end{equation}
with initial condition $Y_{ss}=y$, where the unknown is a path $Y_s \colon t \mapsto Y_{st}$ in a homogeneous space $\Cal M$, with transitive action $(g,x)\mapsto g.x$ of a Lie group $G$ on it. The control path $X \colon t \mapsto X_t = (X_t^1,\ldots,X_t^d)$ with values in $\mathbb R^d$ is given, and the $f_i$'s are smooth maps from $\Cal M$ into the Lie algebra $\frak g=\mop{Lie}(G)$, which in turn define smooth vector fields $x \mapsto \# f_i(x)$ on $\Cal M$:
\begin{equation*}
	\# f_i(x):=\frac{d}{dt}{\restr{t=0}}\exp \big(tf_i(x)\big).x\in T_x \Cal M.
\end{equation*}
In the language of Lie algebroids, considering the tangent vector bundle and the trivial vector bundle $E=\Cal M\times\mathfrak g$, the map $\# \colon C^\infty(\Cal M,\mathfrak g)\to\chi(\Cal M)$ is the composition on the left with the anchor map $\rho \colon E\to T\Cal M$ defined by $\rho (x,X):=\frac{d}{dt}{\restr{t=0}}(\exp tX).x$.

\medskip

The central point of our approach is based on formally lifting the differential equation \eqref{rde} to the space $C^\infty\big(\Cal M,\mathcal U(\frak g)\big)[[h]]$, where $\mathcal U(\mathfrak g)$ is the universal enveloping algebra of $\mathfrak g$. This is achieved as follows: setting $t=s+h$, we denote by $\varphi_{st}$ the formal diffeomorphism defined by $\varphi_{st}(Y_{ss}):=Y_{st}$,  where $t\mapsto Y_{st}$ is the solution of the initial value problem \eqref{rde}. This formal diffeomorphism can be expressed as:
\begin{equation*}
	\varphi_{st}=\# \bm Y_{st}
\end{equation*}
with $\bm Y_{st} \in C^\infty\big(\Cal M,\mathcal U(\frak g)\big)[[h]]$. It turns out that there exists a non-commutative associative product $*$ on $C^\infty\big(\Cal M,\mathcal U(\frak g)\big)$, distinct from the pointwise product in $\Cal U(\mathfrak g)$, which reflects the composition product of differential operators on $\Cal M$, in the sense that:
\begin{equation*}
	\#(u*v)=\# u\circ\# v
\end{equation*}
for any $u,v\in C^\infty\big(\Cal M,\mathcal U(\frak g)\big)$. See reference \cite{MunWri2008} for details. The unit is the constant function $\bm 1$ on $\Cal M$ equal to $1\in\Cal U(\mathfrak g)$, and $\#\bm 1$ is the identity operator. The existence of this product is a direct consequence of the post-Lie algebra structure on $C^\infty\big(\Cal M,\frak g)$. The reader may consult \cite{KAH2015} for details. Extending this product to formal series, our lifting of \eqref{rde} is written as:
\begin{equation}
\label{rde-lifted}
	d\bm Y_{st} = \sum_{i=1}^d \bm Y_{st}*f_i\,dX_t^i
\end{equation}
with initial condition $\bm Y_{ss}=\bm 1$. The non-commutative product $*$ is the extension of the Grossman--Larson product on the post-associative algebra $C^\infty \big(\Cal M,\mathcal U(\frak g)\big)$ to formal series, which reflects the composition of differential operators \cite{MunWri2008}. A full account of the post-Lie algebra structure on $C^\infty \big(\Cal M,\frak g)$ and the post-associative algebra structure on $C^\infty \big(\Cal M,\mathcal U(\frak g)\big)$ will be provided further below in Section \ref{sec:flows}. Let us just mention at this stage that for any $f,g\in C^\infty \big(\Cal M,\frak g)$ we have (Leibniz' rule):
\begin{equation}
\label{GL-1}
	f*g=fg+f\rhd g,
\end{equation}
where $fg$ stands for the pointwise product in $C^\infty \big(\Cal M,\mathcal U(\frak g)\big)$, and where $f\rhd g$ stands for $\#(f).g$. The solution of \eqref{rde-lifted} is a formal diffeomorphism, i.e., it verifies $\bm Y_{st}\rhd(\rho\psi)=(\bm Y_{st}\rhd \rho)(\bm Y_{st}\rhd\psi)$ for any $\rho,\psi\in C^\infty(\Cal M)$. The formal path $Y_{st}$ solving the initial value problem \eqref{rde}, with initial condition $Y_{ss}=y$, is then the character of $C^\infty(\Cal M)$ with values in $\mathbb R[[h]]$ given for any $\psi \in C^\infty(\Cal M)$ by:
\begin{eqnarray}
\label{evaluation}
	Y_{st} \colon C^\infty(\Cal M)&\longrightarrow &\mathbb R[[h]]\nonumber\\
		   \psi &\longmapsto      &\psi(Y_{st})=(\bm Y_{st}\rhd\psi)(y).
\end{eqnarray}
Plugging \eqref{rde-lifted} into \eqref{evaluation} yields:
\allowdisplaybreaks
\begin{eqnarray*}
	\frac{d}{dt}\psi(Y_{st})
	&=&\frac{d}{dt}(\bm Y_{st}\rhd\psi)(y)\\
	&=&\big(\big(\bm Y_{st}*F(t)\big)\rhd\psi\big)(y)\\
	&=&\big(\bm Y_{st}\rhd\big(F(t)\rhd\psi\big)\big)(y)\\
	&=&\big(F(t)\rhd\psi\big)(Y_{st}),
\end{eqnarray*}
which proves this assertion, and therefore justifies viewing \eqref{rde-lifted} as a lift of \eqref{rde}. We refer to $\psi(Y_{st})$ as the evaluation of $\psi$ on the formal path $Y_{st}$. Equation \eqref{rde-lifted} can be written in integral form:
\begin{eqnarray}
\label{rde-gl-int}
	\bm Y_{st}  &=&\bm 1+\int_s^t \bm Y_{su}*F(u)\,du\nonumber\\
			  &=&\bm 1+\sum_{i=1}^d\int_s^t \bm Y_{su}*f_i\,dX^i_u.
\end{eqnarray}
A simple Picard iteration gives the formal expansion:
\begin{align}
	\bm Y_{st}
	&=\bm 1+\sum_{n\ge 1}\,\sum_{1\le i_1,\ldots,i_n\le d}\left(\int\cdots
	\int_{s\le t_n\le\cdots\le t_1\le t}f_{i_n}*\cdots *f_{i_1}\,dX^{i_1}_{t_1}\cdots dX^{i_n}_{t_n}\right) \nonumber\\ 
	&=\bm 1+\sum_{n\ge 1}\,\sum_{1\le i_1,\ldots,i_n\le d}\left(\int\cdots
	\int_{s\le t_n\le\cdots\le t_1\le t}dX^{i_1}_{t_1}\cdots dX^{i_n}_{t_n}\right)f_{i_n}* \cdots * f_{i_1}. \label{formalsol}
\end{align}
Using word notation, where $f_w$ stands for the monomial $f_{i_n}*\cdots *f_{i_1}$ when the word $w$ is given by $a_{i_1}\cdots a_{i_n}$, the formal expansion \eqref{formalsol} will be written as a word series
\begin{equation}
\label{expansion}
	\bm Y_{st}=\sum_{w\in A^*}\langle\bx_{st},w\rangle f_w.
\end{equation}
Using \eqref{GL-1}, the first terms of the expansion are:
\allowdisplaybreaks
\begin{align*}
	\lefteqn{\bm Y_{st}=\bm 1+\sum_{i=1}^d \langle\bx_{st},a_i\rangle f_i
	+\sum_{i,j=1}^d\langle\bx_{st},a_ia_j\rangle (f_jf_i+f_j\rhd f_i)}\\
	&+\sum_{i,j,k=1}^d\langle\bx_{st},a_ia_ja_k\rangle 
	\Big(f_kf_jf_i
	+(f_k\rhd f_j)f_i
	+f_k(f_j\rhd f_i)
	+f_j(f_k\rhd f_i)
	+(f_kf_j)\rhd f_i
	+(f_k\rhd f_j)\rhd f_i\Big)\\
	&+O(h^4).
\end{align*}
We observe that the number of components in the term of order three on the right-hand side can be reduced from six to five:
\begin{align}
\label{six-to-five}
	\sum_{i,j,k=1}^d
	&\bigg[\langle\bx_{st},a_ia_ja_k\rangle \Big(f_kf_jf_i
				+(f_k\rhd f_j)f_i+(f_kf_j)\rhd f_i+(f_k\rhd f_j)\rhd f_i\Big)\nonumber\\
	&+\langle\bx_{st},a_ia_ja_k+a_ia_ka_j\rangle f_j(f_k\rhd f_i)\bigg],
\end{align}
which corresponds to the five planar rooted decorated forests with three vertices, displayed in the following order:
\begin{equation}
\label{five-forests}
	\racine_k\racine_j\racine_i\hskip 	8mm 
	\arbrea_j^k\racine_i\hskip 			8mm 
	\arbrebb_i^{\,j\hskip -7mm k}\hskip 	8mm
	{\arbreba_i^j}^{\hskip -4.5pt k}\hskip 	8mm 
	\racine_j\arbrea_i^k.
\end{equation}
The appearance of planar rooted forests relates to a natural further step in abstraction, namely using the \textsl{Lie--Butcher series formalism}. It consists in an additional lifting of equation \eqref{rde-lifted} to the free post-associative algebra, i.e., the universal enveloping algebra over the free post-Lie algebra in $d$ generators, more precisely to its completion $(\Cal H_{\smop{MKW}}^A)^*$. We obtain then the so-called fundamental differential equation:
\begin{equation}
\label{rde-postlifted}
	d\mathbb Y_{st} = \sum_{i=1}^d \mathbb Y_{st}*\racine_i\,dX_t^i
\end{equation}
with initial condition $\mathbb Y_{ss}=\bm 1$, where $*$ is now the non-commutative convolution (Grossman--Larson) product of two linear forms on $\Cal H_{\smop{MKW}}^A$. Suppose for the moment that the path $X$ in $\mathbb{R}^d$ is differentiable. Equation \eqref{rde-postlifted} can then be re-written as:
\begin{equation}
\label{rde-gl}
	\dot{\mathbb Y}_{st}=\frac{d}{dt}{\mathbb Y}_{st}=\sum_{i=1}^d\dot X_t^i\mathbb Y_{st}*\racine_i,
\end{equation}
with initial condition $\mathbb Y_{ss}=\bm 1$. For any $s,t$ the so-called fundamental  solution $\mathbb Y_{st}$ of \eqref{rde-postlifted} is given by
\begin{equation}
\label{expansion-word}
	\mathbb Y_{st}=\sum_{\ell\ge 0}\sum_{w=a_1\cdots a_\ell\in A^*}\langle\bx_{st},w\rangle \racine_{a_\ell}*\cdots*\racine_{a_1}.
\end{equation}

\medskip

The coefficient of the last component in \eqref{six-to-five} is obtained by integrating $dX^{i_1}_{t_1}dX^{i_2}_{t_2}dX^{i_3}_{t_3}$ on the union of two simplices $\{(t_1,t_2,t_3),\ s\le t_3\le t_1,t_2\le t\}$. This domain is associated to the decorated forest $\racine_j\arbrea_i^k$ by means of a partial order $\ll$ on the vertices described in Subsection \ref{ssec:partialorders}, which is closely related to the notion of left-admissible cuts for the coproduct in $\Cal H_{\smop{MKW}}^A$. The order $\ll$ is total on the four other planar forests of degree three appearing in \eqref{five-forests}, hence the corresponding coefficients are obtained by integrating on a single simplex. Integrating over these domains lifts $\bx_{st}$ to a two-parameter family of characters of the Hopf algebra $\Cal H_{\smop{MKW}}^A$, which still verifies Chen's lemma. This calls for considering rough differential equations defined on the homogeneous space $\Cal M$ driven by planarly branched rough paths.

\medskip

Further below we will use J.~Ecalle's notion of arborification to write the Taylor expansion of the solution \eqref{expansion}, or rather its abstract counterpart \eqref{expansion-word} in its planar arborified form:
\begin{equation}
\label{expansion-lie-butcher}
	\mathbb Y_{st}=\sum_{\sigma\in F^A_{\smop{pl}}}\langle\wt\bx_{st},\sigma\rangle \sigma
\end{equation}
with $\wt\bx_{st}:=\bx_{st}\circ \mathfrak a_\ll$ where $(\mathbb X_{st})_{s,t\in\mathbb R}$ is the signature of the path $X$, and where $ F^A_{\smop{pl}}$ stands for the set of $A$-decorated planar rooted forests.


\section{Factorials in combinatorial Hopf algebras}
\label{sect:comb-fact}

We consider the notion of factorial in the context of a fairly general class of combinatorial algebras. This concept will encompass the usual factorial of positive integers, the tree and forest factorials as well as a planar version of the latter.


\subsection{Inverse-factorial characters in connected graded Hopf algebras}
\label{sect:inv-fact}

Let $\mathcal H = \bigoplus_{n \ge 0} \mathcal H_n$ be any connected graded Hopf algebra over some field $\mathbf k$ of characteristic zero, and let $\alpha \colon \mathcal H_1\to \mathbf{k}$ be a nonzero linear map. The degree of an element $x \in \mathcal H$ is denoted $|x|$. The \textsl{inverse-factorial character} $q_\alpha$ associated to these data is defined by
\begin{itemize}
	\item $q_\alpha(\mathbf{1})=1$,
	\item $q_\alpha\restr{\mathcal H_1}=\alpha$,
	\item $q_\alpha*q_\alpha(x)=2^{|x|}q_\alpha(x)$ for any $x\in\mathcal H$.
\end{itemize}
It is indeed given for any homogeneous $x$ of degree $|x| \ge 2$ by the recursive procedure:
\begin{equation}\label{rec-inv-fact}
	q_\alpha(x)=\frac{1}{2^{|x|}-2}\sideset{}{'}\sum_{(x)}q_\alpha(x')q_\alpha(x'').
\end{equation}
See \cite[Section 7]{Gubi2010} in the particular case of the Butcher--Connes--Kreimer Hopf algebra. Here $\Delta'(x):=\sideset{}{'}\sum_{\!\!(x)}x' \otimes x''$ denotes the reduced coproduct of $\mathcal H$ (in Sweedler's notation), and the full coproduct is $\Delta(x)=\Delta'(x)+x\otimes \mathbf{1} + \mathbf{1} \otimes x=\sum_{(x)}x_{1}\otimes x_{2}$. The multiplicativity property $q_\alpha(xy)=q_\alpha(x)q_\alpha(y)$ is verified recursively with respect to $\ell=|x|+|y|\ge 2$ (the cases $\ell=0$ and $\ell=1$ are immediately checked):
\begin{eqnarray*}
	q_\alpha(xy)
		&=&\frac{1}{2^{|x|+|y|}}\sum_{(x),(y)}q_\alpha(x_{1}y_{1})q_\alpha(x_{2}y_{2})\\
		&=&\frac{1}{2^{|x|+|y|}}\left(\sum_{(x),(y)}q_\alpha(x_{1})q_\alpha(y_{1})q_\alpha(x_{2})q_\alpha(y_{2})
				-2q_\alpha(x)q_\alpha(y)+2q_\alpha(xy)\right)\\
		&=&q_\alpha(x)q_\alpha(y)-\frac{1}{2^{|x|+|y|-1}}\Big(q_\alpha(x)q_\alpha(y)-q_\alpha(xy)\Big),
\end{eqnarray*}
hence $q_\alpha(x)q_\alpha(y)-q_\alpha(xy)=0$.

In general the linear form $\alpha$ is fixed once and for all, and $q_\alpha$ will be abbreviated to $q$. We remark that in concrete situations there is a natural linear basis for the degree one component $\mathcal H_1$ (and more generally for $\mathcal H$, see Paragraph \ref{sect:cha} below for a precise setting), and $\alpha$ will be the linear form on $\mathcal H_1$ which takes the value $1$ on each element of the basis. Taking as a simple example the shuffle algebra on an alphabet $A$, the binomial formula:
\begin{equation*}
	\frac{2^n}{n!}=\sum_{p=0}^n\frac{1}{p!}\frac{1}{(n-p)!}
\end{equation*}
shows that $q_\alpha(w)=1/|w|!$ where $\alpha(a)=1$ for each letter $a\in A$. This example justifies the terminology chosen.

\begin{proposition}
For any $h\in\mathbf k$ the $h$-th power convolution of $q=q_\alpha$ makes sense as a character of $\mathcal H$, and admits the following explicit expression:
\begin{equation}\label{q-to-h}
	q^{*h}(x)=h^{|x|}q(x).
\end{equation}
\end{proposition}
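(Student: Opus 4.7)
The idea is to identify $q=q_\alpha$ as the convolution exponential of an infinitesimal character concentrated in degree one; once this is done, the formula $q^{*h}(x)=h^{|x|}q(x)$ drops out of the exponential series.

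First, I would introduce the linear map $\xi_0:\mathcal H\to\mathbf k$ which extends $\alpha$ by zero outside $\mathcal H_1$, i.e.\ $\xi_0\restr{\mathcal H_1}=\alpha$ and $\xi_0\restr{\mathcal H_n}=0$ for $n\ne 1$. Because $\xi_0$ vanishes on $\mathbf 1$ and on any product of two elements of positive degree (such products all live in degrees $\ge 2$), it is an infinitesimal character of $\mathcal H$, i.e.\ a primitive element of the convolution algebra. Connectedness of the grading ensures that
\begin{equation*}
\exp(\xi_0):=\sum_{k\ge 0}\frac{\xi_0^{*k}}{k!}
\end{equation*}
is a well-defined character of $\mathcal H$. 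The key observation driving the whole argument is that $\xi_0^{*k}$ is itself concentrated in degree $k$: since $\xi_0^{*k}(x)=(\xi_0\otimes\cdots\otimes\xi_0)\Delta^{(k-1)}(x)$ vanishes unless each of the $k$ tensor factors of the iterated coproduct has degree one, $\xi_0^{*k}$ is supported on $\mathcal H_k$.

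The next step is to show that $\exp(\xi_0)=q_\alpha$ by verifying the three properties characterizing the inverse-factorial character. The conditions $\exp(\xi_0)(\mathbf 1)=1$ and $\exp(\xi_0)\restr{\mathcal H_1}=\alpha$ are immediate. For the doubling relation, primitivity of $\xi_0$ yields $\exp(\xi_0)*\exp(\xi_0)=\exp(2\xi_0)$, and the degree-concentration observation collapses the exponential series evaluated at $x\in\mathcal H_n$ to the single term
\begin{equation*}
\exp(2\xi_0)(x)=\frac{(2\xi_0)^{*n}(x)}{n!}=\frac{2^n\xi_0^{*n}(x)}{n!}=2^n\exp(\xi_0)(x).
\end{equation*}
Uniqueness of $q_\alpha$ (implicit in the recursion \eqref{rec-inv-fact}) then gives $q=\exp(\xi_0)$.

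Finally, for any $h\in\mathbf k$ one sets $q^{*h}:=\exp(h\xi_0)$: this is a character, depends polynomially on $h$ on each homogeneous component of $\mathcal H$, satisfies $q^{*1}=q$, and obeys the one-parameter group law $q^{*(h+k)}=q^{*h}*q^{*k}$. Evaluating at $x$ homogeneous of degree $n$, only the $k=n$ term of the exponential series survives, giving
\begin{equation*}
q^{*h}(x)=\frac{(h\xi_0)^{*n}(x)}{n!}=\frac{h^n\xi_0^{*n}(x)}{n!}=h^n q(x),
\end{equation*}
as claimed. The whole argument hinges on the degree-concentration of $\xi_0^{*k}$; that observation makes the exponential series collapse to a single term on each $\mathcal H_n$ and is the mechanism giving the formula its simple form. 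I do not anticipate any further serious obstacle, since everything else reduces to routine manipulations of the exponential.
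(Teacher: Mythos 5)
Your proof is correct, but it follows a genuinely different route from the paper's. The paper writes $q=\varepsilon+\varphi$ and \emph{defines} $q^{*h}:=\sum_{p\ge 0}\binom{h}{p}\varphi^{*p}$ (a finite sum by conilpotence); it then establishes both the character property and the identity $q^{*h}(x)=h^{|x|}q(x)$ by a polynomial-interpolation trick: each discrepancy is polynomial in $h$ and vanishes on a Zariski-dense set of values (non-negative integers for multiplicativity, powers of $2$ for the formula, using the defining relation $q*q=2^{|\cdot|}q$ iteratively), hence vanishes identically. You instead \emph{identify} $q$ explicitly as $\exp_*(\xi_0)$ for the degree-one infinitesimal character $\xi_0$ extending $\alpha$ by zero; the degree-concentration of $\xi_0^{*k}$ on $\mathcal H_k$ then collapses the exponential series to a single term on each homogeneous component, and the formula, the character property, and the one-parameter group law all drop out by direct computation. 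Your argument is more structural and makes the mechanism transparent (it pins down the convolution logarithm of $q$), whereas the paper's argument is shorter and avoids exhibiting the logarithm at the cost of an indirect interpolation step. Both are valid; it is worth noting that your $\exp(h\xi_0)$ and the paper's binomial expression define the same object, since both are polynomial in $h$ on each $\mathcal H_n$ and agree with the genuine $n$-fold convolution power at all non-negative integers $h=n$. One minor point of rigor: where you assert $\xi_0$ is an infinitesimal character because it "vanishes on products of elements of positive degree," you should also observe that $\xi_0(\mathbf 1)=0$ takes care of the case where one factor is a scalar — as written the reader has to supply this, though it is immediate.
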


\begin{proof}
One can express $q$ as $\varepsilon+\kappa$ with $\kappa(\mathbf 1)=0$. Then we have for any $x\in\mathcal H$
\begin{equation*}
	q^{*h}(x)=(\varepsilon+\kappa)^{*h}(x)=\sum_{p\ge 0}{h\choose p}\kappa^{*p}(x).
\end{equation*}
The right-hand side is a finite sum, owing to the co-nilpotence of the coproduct. The expression $q^{*h}(xy)-q^{*h}(x)q^{*h}(y)$ is polynomial in $h$ and vanishes at any non-negative integer $h$, hence vanishes identically. Similarly, the expression $q^{*h}(x)-h^{|x|}q(x)$ is polynomial in $h$ and vanishes at any $h=2^N$ where $N$ is a non-negative integer, hence vanishes identically.
\end{proof}

The following corollary generalises both the binomial formula and Gubinelli's branched binomial formula \cite[Lemma 4.4]{Gubi2010}.

\begin{corollary}\label{hopf-binom}
For any $h,k\in \mathbf k$ and for any homogeneous element $x\in\mathcal H$, the following Hopf-algebraic binomial formula holds:
\begin{equation*}
	q(x)(h+k)^{|x|}=\sum_{(x)}q(x_1)q(x_2)h^{|x_1|}k^{|x_2|}.
\end{equation*}
\end{corollary}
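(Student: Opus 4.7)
The plan is to derive the corollary directly from the explicit formula $q^{*h}(x)=h^{|x|}q(x)$ established in the preceding proposition, combined with the fact that $h\mapsto q^{*h}$ is a one-parameter convolution subgroup, i.e., $q^{*(h+k)}=q^{*h}*q^{*k}$.

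First I would verify the one-parameter subgroup identity. Writing $q=\varepsilon+\varphi$ with $\varphi(\mathbf 1)=0$, one has $q^{*h}=\sum_{p\ge 0}\binom{h}{p}\varphi^{*p}$, the sum being finite on any fixed $x$ by co-nilpotence of $\Delta$. Convolving and using the Vandermonde identity $\binom{h+k}{n}=\sum_{p+q=n}\binom{h}{p}\binom{k}{q}$ yields $q^{*h}*q^{*k}=q^{*(h+k)}$ as linear forms on $\Cal H$. Alternatively, both sides of $q^{*(h+k)}(x)-(q^{*h}*q^{*k})(x)$ are polynomial in $(h,k)$ and vanish on $\NN\times\NN$ (where it is just the trivial identity for integer convolution powers of the character $q$), hence vanish identically; this is the same trick used in the proof of the proposition above.

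Next I would evaluate both sides of the identity $q^{*(h+k)}=q^{*h}*q^{*k}$ at a homogeneous element $x\in\Cal H$. The left-hand side equals $(h+k)^{|x|}q(x)$ by the proposition. The right-hand side, by definition of the convolution product on $\Cal H^*$, equals
\begin{equation*}
\sum_{(x)}q^{*h}(x_1)\,q^{*k}(x_2)=\sum_{(x)}h^{|x_1|}q(x_1)\,k^{|x_2|}q(x_2),
\end{equation*}
where the second equality uses the proposition applied to each Sweedler component (note that $x_1$ and $x_2$ are themselves homogeneous since $\Cal H$ is graded and $\Delta$ respects the grading). Equating the two expressions yields the claimed formula.

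I do not foresee any real obstacle: the work has essentially already been done in the proposition preceding the corollary, and the only additional ingredient is the one-parameter subgroup property of $h\mapsto q^{*h}$, which is standard and follows from Vandermonde (or a polynomial-identity argument identical in spirit to the one the authors used). The main point worth emphasising in the write-up is that the identity extends from non-negative integer $h,k$ (where it is essentially the statement that $q^{*n}$ is a character for each $n$) to arbitrary $h,k\in\mathbf k$ via polynomiality.
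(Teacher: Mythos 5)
Your proof is correct and follows the same route as the paper: evaluate the group identity $q^{*(h+k)}=q^{*h}*q^{*k}$ at a homogeneous $x$ and substitute $q^{*h}(x)=h^{|x|}q(x)$ from the preceding proposition. The paper states this in one line, taking the group property for granted, while you also supply its (standard) justification via Vandermonde or polynomiality, which is a harmless amplification.
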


\begin{proof}
It is a straightforward application of \eqref{q-to-h} together with the group property $q^{*h}*q^{*k}=q^{*(h+k)}$.
\end{proof}

Inverse-factorial characters are functorial, that is, if $\Cal H$ and $\Cal H'$ are two connected graded Hopf algebras and if $\Phi:\Cal H\to\Cal H'$ is a morphism of Hopf algebras preserving the degree, then for any linear map $\alpha':\Cal H'\to \mathbf{k}$ we have:
\begin{equation}
\label{funct}
	q_{\alpha}=q_{\alpha'}\circ \Phi
\end{equation}
where $\alpha:=\alpha'\circ \Phi\restr{\mathcal H_1}$.


\subsection{A suitable category of combinatorial Hopf algebras}
\label{sect:cha}

Although the theory of combinatorial Hopf algebras constitutes an active field of research, with duly acknowledged applications in discrete mathematics, analysis, probability, control, and quantum field theory, no general consensus has yet emerged on a proper definition of those Hopf algebras. Saying this, we propose here a definition which will match our purpose, i.e., give estimates which will ensure convergence of the formal solutions of our singular differential equations in some particular cases. A different proposal for a definition of combinatorial Hopf algebras can also be found in \cite{DS2016} (see Definition 3.1 therein). In both definitions, a privileged linear basis is part of the initial data.\\

\begin{definition}
A combinatorial Hopf algebra is a graded connected Hopf algebra $\mathcal H=\bigoplus_{n\ge 0}\mathcal H_n$ over a field $\mathbf{k}$ of characteristic zero, together with a basis $\mathcal B=\bigsqcup_{n\ge 0}\mathcal B_n$ of homogeneous elements, such that
\begin{enumerate}
	\item There exist two positive constants $B$ and $C$ such that the dimension of $\mathcal H_n$ is bounded by $BC^n$ (in other words, the Poincar\'e--Hilbert series of $\mathcal H$ converges in a small disk around the origin).
	
	\item The structure constants $c_{\sigma\tau}^\rho$ and $c_{\rho}^{\sigma\tau}$ of the product and the coproduct, defined for any $\sigma,\tau,\rho \in \mathcal B$ respectively by
$$
	\sigma\tau=\sum_{\rho\,\in\mathcal B}c_{\sigma\tau}^\rho \rho,\hskip 12mm \Delta \rho
	=\sum_{\sigma,\tau\,\in\mathcal B}c_{\rho}^{\sigma\tau}\sigma\otimes\tau$$
are non-negative integers (which vanish unless $|\sigma|+|\tau|=|\rho|$)\ignore{, and are bounded by $B'C'^{|\rho|}$ where $B'$ and $C'$ are two positive constants}.
\end{enumerate}
\end{definition}

In any combinatorial Hopf algebra in the above sense, the inverse-factorial character $q$ will be chosen such that $q(\tau)=1$ for any $\tau\in \mathcal B$ of degree one. We adopt the natural shorthand notation:
\begin{equation}\label{gen-factorial}
	\tau!=\frac{1}{q(\tau)}
\end{equation}
for any $\tau\in \mathcal B$. The two main examples are the shuffle Hopf algebra $\mathcal H^A_{\sshu}$ and the Butcher--Connes--Kreimer Hopf algebra $\mathcal H^A_{\smop{BCK}}$ over a finite alphabet $A$. In the former case the basis $\mathcal B$ is given by words with letters in $A$, whereas in the latter case we have non-planar rooted forests decorated by $A$. On $\mathcal H^A_{\sshu}$ the corresponding factorial is the usual factorial of the length of a word. On $\mathcal H^A_{\smop{BCK}}$ it is the usual forest factorial \cite[Lemma 4.4]{Gubi2010}. A third major example is the Hopf algebra $\mathcal H^A_{\smop{MKW}}$ of Lie group integrators described in Paragraph \ref{sect:hmkw} below.

\medskip

Let $(\mathcal H,\mathcal B)$ and $(\mathcal H',\mathcal B')$ be two combinatorial hopf algebras in the above sense. A Hopf algebra morphism $\Phi:(\mathcal H,\mathcal B)\to (\mathcal H',\mathcal B')$ is \textsl{combinatorial} if it is of degree zero and if, for any $\tau\in\mathcal B$, the element $\Phi(\tau)\in \mathcal H'$ is a linear combination of elements of the basis $\mathcal B'$ with non-negative integer coefficients. Combinatorial Hopf algebras in the above sense together with combinatorial morphisms form a category. The forgetful functor  $(\mathcal H,\mathcal B)\mapsto \mathcal H$ into the category of connected graded Hopf algebras is given by forgetting the basis.

\begin{remark}\label{nondeg}\rm
The inverse-factorial character $q$ may vanish on some elements $\tau$ of the basis, yielding $\tau!=+\infty$. This happens if and only if $\tau$ is primitive of degree $n\ge 2$. We therefore call a combinatorial Hopf algebra \textsl{non-degenerate} if
\begin{equation}
	\mathcal B\cap\mop{Prim}(\mathcal H)=\mathcal B_1.
\end{equation}
The three combinatorial Hopf algebras $\mathcal H^A_{\sshu}$, $\mathcal H^A_{\smop{BCK}}$ and $\mathcal H^A_{\smop{MKW}}$ happen to be non-degenerate. Examples of degenerate combinatorial Hopf algebras can easily be found among Hopf algebras of Feynman graphs, as primitive multiloop Feynman graphs do exist.
\end{remark}


\section{Rough paths and connected graded Hopf algebras}
\label{sect:H-rough}

We show that Lyons' definition of rough paths \cite{L98} extends straightforwardly when replacing the shuffle Hopf algebra with any commutative connected graded Hopf algebra. In particular a naturally extended version of the extension theorem \cite[Theorem 2.2.1]{L98} is available.

\subsection{Chen iterated integrals and rough paths}
\noindent Let $d$ be a positive integer, and let us consider a smooth path in $\mathbb R^d$
\allowdisplaybreaks
\begin{eqnarray*}
	X:\mathbb R 	&\longrightarrow
				& \mathbb R^d\\
			      t &\longmapsto & X(t)=\big(X_1(t),\ldots,X_d(t)\big).
\end{eqnarray*}
Let $\Cal H^A$ be the algebra of the free monoid $A^*$ generated by the alphabet $A:=\{a_1,\ldots,a_d\}$, and augmented with the empty word $\bm 1$ as unit. Let $\bx_{st}\in(\Cal H^A)^\star$ be defined for any $s,t\in\mathbb R$ and word $w=a_{j_1}\cdots a_{j_n} \in A^*$ by $n$-fold iterated integrals:
\begin{eqnarray}\label{signature}
	\langle \bx_{st},\,a_{j_1}\cdots a_{j_n}\rangle 
	&:=&\int\cdots\int_{s\le t_n\le\cdots \le t_1\le t}\,\dot X_{j_1}(t_1)\cdots \dot X_{j_n}(t_n)\,dt_1\cdots dt_n\nonumber\\
	&=&\int\cdots\int_{s\le t_n\le\cdots \le t_1\le t}\,dX_{j_1}(t_1)\cdots dX_{j_n}(t_n).
\end{eqnarray}
This is extended to the empty word $\bm 1$ by $\langle \bx_{st},\bm 1\rangle:=1$. Suppose moreover that the derivative $\dot X$ is bounded, i.e., $\mop{sup}_{j=1}^d\mop{sup}_{t\in\mathbb R}|\dot X_j(t)|=C<+\infty$. The volume of the simplex
$$
	\Delta^n_{[s,t]}:=\{(t_1,\ldots, t_n),\  s \le t_n \le \cdots \le t_1 \le t\}
$$
over which the iterated integration \eqref{signature} of length $n$ is performed is equal to $|t-s|^n/n!$, which yields the following estimate for any word $w\in A^*$:
\begin{equation}
\label{sp-estimates}
	\mop{sup}_{s\neq t}\frac{\vert\langle \bx_{st},\,w\rangle\vert}{\vert t-s\vert^{\vert w\vert}}\le\frac{C^{|w|}}{|w|!},
\end{equation}
where $\vert w\vert$ stands for the length of the word $w$, i.e., its number of letters. It turns out \cite{Chen54, Chen57, Chen67, Chen77, Ree58} that $\bx_{st}$ is a two-parameter family of characters with respect to the shuffle product of words, namely:
\begin{equation}\label{shuffle}
	\langle \bx_{st},\,v\rangle\langle \bx_{st},\,w\rangle=\langle \bx_{st},\,v\shu w\rangle.
\end{equation}
The shuffle product $\shu$ is defined inductively by $w \shu \bm 1 =\bm 1\shu w=w$ and
\begin{equation}
\label{shuffleproduct}
	(a_i v) \shu (a_j w)=a_i(v\shu a_j w) + a_j(a_i v \shu w),
\end{equation}
for all words $v,w \in A^\ast$ and letters $a_i,a_j\in A$. The resulting shuffle algebra is denoted $\Cal H_{\sshu}^A$. For instance, $a_i \shu a_j = a_ia_j + a_ja_i$ and
\begin{equation*}
	a_{i_1} a_{i_2}\shu a_{i_3}a_{i_4} = a_{i_1}a_{i_2}a_{i_3}a_{i_4}
	+ a_{i_1}a_{i_3}a_{i_2}a_{i_4}+a_{i_1}a_{i_3}a_{i_4}a_{i_2}
	+ a_{i_3}a_{i_1}a_{i_2}a_{i_4} + a_{i_3}a_{i_1}a_{i_4}a_{i_2} + a_{i_3}a_{i_4}a_{i_1}a_{i_2}.
\end{equation*} 
Moreover, the following property, now widely referred to as ``Chen's lemma", is verified:
\begin{equation}\label{chen-lemma}
	\langle\bx_{st},\,v\rangle=\sum_{v'v''=v}\langle\bx_{su},\,v'\rangle\langle\bx_{ut},\,v''\rangle.
\end{equation}
The sum on the righthand side extends over all splittings of the word $v \in A^*$ into two words, $v'$ and $v''$, such that the concatenation $v'v''$ equals $v$. Both properties are easily shown by a suitable decomposition of the integration domain into smaller pieces with Lebesgue-negligible mutual intersections: a product of two simplices is written as a union of simplices for proving \eqref{shuffle}, and a simplex of size $t-s$ is written as a union of products of simplices of respective size $u-s$ and $t-u$ for proving \eqref{chen-lemma} when $s\le u\le t$. The latter is advantageously re-written in terms of the convolution product associated to the deconcatenation coproduct $\Delta: \Cal H_{\sshu}^A \to \Cal H_{\sshu}^A \otimes \Cal H_{\sshu}^A$ defined on words, $w\mapsto \Delta(v)=\sum_{v'v''=v}v'\otimes v''$. The latter turns the shuffle algebra $\Cal H_{\sshu}^A$ into a connected graded commutative Hopf algebra $(\Cal H_{\sshu}^A,\shu,\Delta)$ with convolution product defined on the dual $({\Cal H_{\sshu}^A})^\star$:
\begin{equation}\label{chen-lemma-hopf}
	\bx_{st}=\bx_{su}\ast\bx_{ut}=m_{\mathbb{R}}(\bx_{su}\otimes\bx_{ut})\Delta.
\end{equation}
It has long ago been proposed by K.~T.~Chen to call ``generalized path" \cite{Chen67} any two-parameter family of characters of the shuffle algebra $\Cal H_{\sshu}^A$ verifying \eqref{shuffle} and \eqref{chen-lemma} together with a mild continuity assumption. Lyons introduced the seminal notion of rough path \cite{L98}, which can be defined as follows \cite[Definition 1.2]{HaiKel2015}: a \textsl{geometric rough path\footnote{To be precise: \textsl{weak geometric rough path}, see \cite[Remark 1.3]{HaiKel2015}.} of regularity $\gamma$}, with $0<\gamma\le 1$, is a generalized path in the sense of Chen, satisfying moreover the estimates:
\begin{equation}\label{rp-estimates}
	\mop{sup}_{s\neq t}\frac{\vert\langle \bx_{st},\,w\rangle\vert}{\vert t-s\vert^{\gamma\vert w\vert}}<C(w)
\end{equation}
for any word $w$ of length $\vert w\vert$, where $C(w)$ is some positive constant. The evaluation on length one words is then given by the increments of a $\gamma$-H\"older continuous path:
\begin{equation}
	\langle\bx_{st},\,a_j\rangle:=X_j(t)-X_j(s)
\end{equation}
with $X_j(t):=\langle\bx_{t_0t},\,a_j\rangle$ for some arbitrary choice of $t_0\in\mathbb R$. Iterated integrals \eqref{signature} cannot be given any sense for any $n>0$ if the path is only of regularity $\gamma\le 1/2$. Lyons' extension theorem \cite{L98,HaiKel2015}, however, stipulates that the collection of coefficients $\langle \bx_{st},\,w\rangle$ for the words $w$ of length up to $[1/\gamma]$ completely determines the $\gamma$-regular rough path $\bx$. This result is a particular case of Theorem \ref{extension-generalized} below, the proof of which also uses the sewing lemma (Proposition \ref{prop:fdpsewing}).


\subsection{Rough paths generalized to commutative combinatorial Hopf algebras}
\label{sect:rpha}

We have briefly indicated in the Introduction how to adapt the notion of rough path to any connected graded Hopf algebra. Here is the precise definition:

\begin{definition}\label{rough}
Let $\Cal H=\bigoplus_{n\ge 0}\Cal H_n$ be a commutative graded Hopf algebra with unit $\bm 1$, connected in the sense that $\Cal H_0$ is one-dimensional, and let $\gamma\in]0,1]$. We suppose that $\mathcal H$ is endowed with a homogeneous basis $\mathcal B$ making it combinatorial and non-degenerate in the sense of Section \ref{sect:cha}. A {\rm $\gamma$-regular $\Cal H$-rough path\/} is a two-parameter family $\bx=(\bx_{st})_{s,t\in\mathbb R}$ of linear forms on $\Cal H$ such that $\langle \bx_{st},\bm 1\rangle=1$ and
\begin{enumerate}[I)]
\item \label{lyons-one} for any $s,t\in\mathbb R$ and for any $\sigma,\tau$ in $\Cal H$, the following equality holds 
$$
	\langle\bx_{st},\,\sigma\tau\rangle=\langle\bx_{st},\,\sigma \rangle\langle\bx_{st},\, \tau\rangle,
$$

\item \label{lyons-two} for any $s,t,u\in\mathbb R$, Chen's lemma holds
$$
	\bx_{su}*\bx_{ut}=\bx_{st},
$$ 
where the convolution $\ast$ is the usual one defined in terms to the coproduct on $\Cal H$, 

\item \label{lyons-three} for any $n\ge 0$ and for any $\sigma \in \Cal B_n$, we have the estimates
\begin{equation}\label{plrp-estimates}
	\mop{sup}_{s\neq t}\frac{\vert\langle \bx_{st},\,\sigma\rangle\vert}{\vert t-s\vert^{\gamma |\sigma|}}<C(\sigma).
\end{equation}
\end{enumerate}
\end{definition}

The notion of $\gamma$-regular branched rough path \cite{Gubi2010, HaiKel2015} is recovered by choosing for $\Cal H$ the Butcher--Connes--Kreimer Hopf algebra $\Cal H_{\smop{BCK}}^A$ of $A$-decorated (non-planar) rooted forests. Recall that the product in this Hopf algebra is given by the disjoint union of rooted trees.
\textcolor{blue}
{\begin{remark}\rm
Theorem \ref{extension-generalized} below will permit to give precise expressions of the constants $C(w)$ and $C(\sigma)$ in Estimates \eqref{rp-estimates} and \eqref{plrp-estimates}, respectively.
\end{remark}}
\smallskip

\noindent The truncated counterpart of $\Cal H$-rough paths is defined as follows.

\begin{definition}\label{rough-truncated}
Let $N$ be a positive integer and let $\Cal H^{(N)}:=\bigoplus_{k=0}^N \Cal H_k$. Let $\gamma\in]0,1]$. A {\rm $\gamma$-regular $N$-truncated $\Cal H$-rough path\/} is a two-parameter family $\bx=(\bx_{st})_{s,t\in\mathbb R}$ of linear forms on $\Cal H^{(N)}$ such that:
\begin{enumerate}[i)]
\item \label{item-one-truncated} the multiplicativity property \eqref{lyons-one} above holds for any $\sigma\in \Cal H_{p}$ and $\tau \in \Cal H_{q}$ with $p+q\le N$,

\item \label{item-two-truncated} Chen's lemma \eqref {lyons-two} holds, where the convolution refers to the restriction of the coproduct to $\Cal H^{(N)}$,

\item \label{item-three-truncated} the estimates \eqref{lyons-three} hold for any $\sigma \in \Cal H_n$ with $n\le N$.
\end{enumerate}
\end{definition}

For later use we also recall Sweedler's notation $\Delta(\sigma)=\sum_{(\sigma)}\sigma_{1}\otimes \sigma_{2},$ for the full coproduct $\Delta$ in $\Cal H$, as well as its iterated versions $\Delta^{(k-1)}(\sigma)=\sum_{(\sigma)}\sigma_1\otimes\cdots\otimes \sigma_k.$ For the reduced coproduct, we also adopted a Sweedler-type notation:
$$
	\Delta'(\sigma) := \Delta(\sigma) - \sigma \otimes\bm 1 - \bm 1 \otimes \sigma = \sideset{}{'}\sum_{(\sigma)} \sigma' \otimes \sigma''.
$$

Lyons' extension theorem \cite[Theorem 2.2.1]{L98} can be generalised to this setting, with basically the same proof:

\begin{theorem}\label{extension-generalized}
Let $\gamma\in]0,1]$, and let $N:=[1/\gamma]$. Any $\gamma$-regular $N$-truncated $\Cal H$-rough path admits a unique extension to a $\gamma$-regular $\Cal H$-rough path. Moreover, there exists a positive constant $c$ such that the following estimate holds:
\begin{equation}\label{estimate-gamma-rough}
\vert\langle\mathbb X_{st},\sigma\rangle\vert\le c^{|\sigma|}q_\gamma(\sigma)|t-s|^{\gamma|\sigma|}
\end{equation}
for any $\sigma\in\mathcal B$, with $q_\gamma(\sigma)=q(\sigma)$ for $|\sigma|\le N$ and
\begin{equation}\label{recursive-q-gamma}
q_\gamma(\sigma):=\frac{1}{2^{\gamma|\sigma|}-2}\sideset{}{'}\sum_{(\sigma)}q_\gamma(\sigma')q_\gamma(\sigma'')
\end{equation}
for $|\sigma|\ge N+1$.
\end{theorem}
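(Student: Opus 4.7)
The plan is to extend $\mathbb X$ by induction on the degree $n \ge N+1$, using the sewing lemma (Proposition \ref{prop:fdpsewing}) to produce the level-$n$ component from the lower ones. Suppose inductively that $\mathbb X_{st}$ has been constructed on $\mathcal H^{(n-1)}$, satisfying I), II), III), together with the estimate \eqref{estimate-gamma-rough} for all $\tau \in \mathcal B$ of degree $<n$. For each $\sigma \in \mathcal B_n$, I would define the two-variable candidate
$$
\mu_\sigma(s,t) := \sideset{}{'}\sum_{(\sigma)} \langle \mathbb X_{s,m}, \sigma_1\rangle \langle \mathbb X_{m,t}, \sigma_2\rangle, \qquad m := (s+t)/2,
$$
well-defined since $|\sigma_1|, |\sigma_2| < n$. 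The induction estimate yields $|\mu_\sigma(s,t)| \le K_\sigma |t-s|^{\gamma n}$.

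Chen's lemma applied at the midpoint of $[s,t]$ would force the recursion $\mathbb X_{st}(\sigma) = \mu_\sigma(s,t) + \mathbb X_{s,m}(\sigma) + \mathbb X_{m,t}(\sigma)$, which motivates the iterated dyadic formula
$$
\langle\mathbb X_{st}, \sigma\rangle := \lim_{N\to\infty} \sum_{j=0}^{N-1}\sum_{k=0}^{2^j-1} \mu_\sigma(t_{k,j}, t_{k+1,j}), \qquad t_{k,j} := s + k(t-s)/2^j.
$$
Convergence follows from the size bound on $\mu_\sigma$ combined with $\gamma n > 1$, which makes the $j$-th layer bounded by a constant times $|t-s|^{\gamma n} 2^{j(1-\gamma n)}$, hence geometrically summable. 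Equivalently, the three-point defect $\mu_\sigma(s,t) - \mu_\sigma(s,u) - \mu_\sigma(u,t)$ is bounded trivially by $O(|t-s|^{\gamma n}) = O(|t-s|^{1+\varepsilon})$ with $\varepsilon := \gamma n - 1 > 0$, so Proposition \ref{prop:fdpsewing} applies and gives the refined estimate $|\langle\mathbb X_{st}, \sigma\rangle - \mu_\sigma(s,t)| \le C_\sigma (1-2^{-\varepsilon})^{-1} |t-s|^{\gamma n}$.

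The axioms are then verified as follows. Chen's lemma II) holds by construction at the midpoint $u=m$, and extends to arbitrary $u$ via a uniqueness argument: the map $D(s,t) := \langle\mathbb X_{st},\sigma\rangle - \langle\mathbb X_{su},\sigma\rangle - \langle\mathbb X_{ut},\sigma\rangle - G_\sigma(s,u,t)$, where $G_\sigma(s,u,t) := \sideset{}{'}\sum_{(\sigma)} \langle\mathbb X_{su},\sigma_1\rangle\langle\mathbb X_{ut},\sigma_2\rangle$, is an $(s,t)$-additive function bounded by $|t-s|^{\gamma n}$ with $\gamma n > 1$, hence identically zero. Multiplicativity I) follows analogously: for $\sigma=\rho\tau$ with $|\rho|,|\tau|>0$, the function $\langle\mathbb X_{st},\rho\tau\rangle - \langle\mathbb X_{st},\rho\rangle\langle\mathbb X_{st},\tau\rangle$ is $(s,t)$-additive (using commutativity of $\mathcal H$ to check that both expressions produce the same Chen defect at level $n$) and of order $|t-s|^{\gamma n}$, hence zero. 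The estimate \eqref{estimate-gamma-rough} follows inductively: the constant $K_\sigma = c^n \sideset{}{'}\sum_{(\sigma)} q_\gamma(\sigma_1)q_\gamma(\sigma_2)$ combined with the sewing multiplier $(1-2^{-\varepsilon})^{-1} = 2^{\gamma n}/(2^{\gamma n}-2)$ exactly reproduces the recursion \eqref{recursive-q-gamma} defining $q_\gamma(\sigma)$, closing the induction.

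The main obstacle is the uniqueness step that promotes additive $(s,t)$-maps of order $|t-s|^{\gamma n}$ to zero (where $\gamma n > 1$ is essential, as it forces the underlying one-variable primitive to be constant): this standard argument is what makes both Chen's lemma at arbitrary $u$ and multiplicativity I) work, and it also underlies the uniqueness of the entire extension level by level, given the normalisation $\mathbb X_{ss}=\varepsilon$.
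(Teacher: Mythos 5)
Your overall strategy (inductive extension level by level, dyadic subdivision, estimates closing the $q_\gamma$-recursion, additivity argument for multiplicativity and uniqueness) is the right one, and your arithmetic is correct: summing the dyadic layers of the bound $|\mu_\sigma(s,t)|\le c^n(1-2^{1-\gamma n})q_\gamma(\sigma)|t-s|^{\gamma n}$ over $j\ge 0$ recovers exactly $c^n q_\gamma(\sigma)|t-s|^{\gamma n}$, matching \eqref{recursive-q-gamma}. However, there is a genuine gap at the step from Chen at midpoints to Chen at an arbitrary intermediate point $u$. Your midpoint-based $\mu_\sigma(s,t)=\sideset{}{'}\sum_{(\sigma)}\langle\mathbb X_{sm},\sigma_1\rangle\langle\mathbb X_{mt},\sigma_2\rangle$ encodes the Chen defect only at $u=m$, and the resulting dyadic sum $\nu(s,t)$ satisfies only the semi-additive recursion $\nu(s,t)=G_\sigma(s,m,t)+\nu(s,m)+\nu(m,t)$. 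To get Chen at general $u$ you invoke the claim that $D(s,t):=\langle\mathbb X_{st},\sigma\rangle-\langle\mathbb X_{su},\sigma\rangle-\langle\mathbb X_{ut},\sigma\rangle-G_\sigma(s,u,t)$ is ``$(s,t)$-additive''; but with $u$ held fixed, $D$ is manifestly not additive in $(s,t)$ (for $D(s,v)$ and $D(v,t)$ the fixed $u$ need not even lie in the subinterval), so the cited uniqueness mechanism does not apply to $D$ directly. Since multiplicativity and uniqueness both rely on Chen at all $u$, the gap propagates. Note also that your ``equivalently'' is slightly off: Proposition \ref{prop:fdpsewing} produces an \emph{additive} $\Phi(s,t)=\varphi(t)-\varphi(s)$, which is not your $\nu$ (whose defect against $\mu_\sigma$ at the midpoint must equal $\nu(s,m)+\nu(m,t)$, not zero); one cannot simply set $\langle\mathbb X_{st},\sigma\rangle:=\varphi(t)-\varphi(s)$, as that would annihilate the Chen defect instead of reproducing it.

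The paper avoids exactly this difficulty by the fixed-origin device: it sets $\mu(s,t):=-\sideset{}{'}\sum_{(\sigma)}\langle\mathbb X_{os},\sigma'\rangle\langle\mathbb X_{st},\sigma''\rangle$ for an arbitrary but fixed $o$, and a direct computation using Chen at levels $\le N$ shows $\mu(s,t)-\mu(s,u)-\mu(u,t)=\sideset{}{'}\sum_{(\sigma)}\langle\mathbb X_{su},\sigma'\rangle\langle\mathbb X_{ut},\sigma''\rangle$ for \emph{every} $u$, so after sewing the correction $\varphi$, the definition $\langle\wt{\mathbb X}_{st},\sigma\rangle:=\varphi(s)-\varphi(t)-\mu(s,t)$ automatically has the correct Chen defect for arbitrary $u$; multiplicativity and uniqueness then follow by the same subdivision argument you sketch. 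If you wish to keep your midpoint/dyadic construction, the cleanest repair is to observe that your $\nu$ and the paper's $\wt{\mathbb X}$ both satisfy the same semi-additive recursion at midpoints and the same bound $c^n q_\gamma(\sigma)|t-s|^{\gamma n}$, so their difference is semi-additive and $O(|t-s|^{1+\varepsilon})$, hence vanishes by dyadic subdivision; this identifies $\nu=\wt{\mathbb X}$ and imports Chen at all $u$ from the paper's construction, but in effect re-proves it.
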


\begin{proof}
Notice that $\varepsilon:=\gamma(N+1)-1$ is (strictly) positive. If the element $\sigma\in\Cal H$ is homogeneous of degree $n$, then $\sigma'$ and $\sigma''$ in its reduced coproduct, $\Delta'(\sigma)$, can be taken homogeneous with respective degree $p$ and $q$ with $p+q=n$ and $p,q\le n-1$. Now let $(\bx_{st})_{s,t\in\mathbb R}$ be a $\gamma$-regular $N$-truncated $\Cal H$-rough path. We extend it trivially to $\Cal H^{(N+1)}$ by setting $\langle\bx_{st},\,\sigma\rangle=0$ for any $\sigma\in\Cal H_{N+1}$.\\

\noindent Now let $\sigma\in \Cal B_{N+1}$. Fix an arbitrary $o\in\mathbb R$, and consider the function of two real variables:
\begin{equation}
	\mu(s,t):=\sideset{}{'}\sum_{(\sigma)}\langle\bx_{os},\sigma'\rangle\langle\bx_{st},\sigma''\rangle.
\end{equation}
Let $s,t,u\in\mathbb R$. A simple computation yields:
\begin{align}
\lefteqn{\mu(s,u)+\mu(u,t)-\mu(s,t) 
	=\sideset{}{'}\sum_{(\sigma)}-\langle\bx_{os},\sigma'\rangle\langle\bx_{st},\sigma''\rangle+\langle\bx_{os},\sigma'\rangle\langle\bx_{su},\sigma''\rangle
	+ \langle\bx_{ou},\sigma'\rangle\langle\bx_{ut},\sigma''\rangle}											\nonumber\\
	&=-\sideset{}{'}\sum_{(\sigma)}\langle\bx_{os},\sigma'\rangle\langle\bx_{su},\sigma''\rangle\langle\bx_{ut},\sigma'''\rangle
	-\sideset{}{'}\sum_{(\sigma)}\langle\bx_{os},\sigma'\rangle\langle\bx_{su},\sigma''\rangle
	-\sideset{}{'}\sum_{(\sigma)}\langle\bx_{os},\sigma'\rangle\langle\bx_{ut},\sigma''\rangle \nonumber\\
	&\hskip 10mm +\sideset{}{'}\sum_{(\sigma)}\langle\bx_{os},\sigma'\rangle\langle\bx_{su},\sigma''\rangle\nonumber\\
	&\hskip 10mm +\sideset{}{'}\sum_{(\sigma)}\langle\bx_{os},\sigma'\rangle\langle\bx_{su},\sigma''\rangle\langle\bx_{ut},\sigma'''\rangle
	+\sideset{}{'}\sum_{(\sigma)}\langle\bx_{os},\sigma'\rangle\langle\bx_{ut},\sigma''\rangle
	+\sideset{}{'}\sum_{(\sigma)}\langle\bx_{su},\sigma'\rangle\langle\bx_{ut},\sigma''\rangle	\nonumber\\
	&=\sideset{}{'}\sum_{(\sigma)}\langle\bx_{su},\sigma'\rangle\langle\bx_{ut},\sigma''\rangle.\label{mu}
\end{align}
Hence if $s\le u\le t$ or $t\le u\le s$, we can estimate, using \eqref{recursive-q-gamma}:
\begin{eqnarray*}
	\vert\mu(s,t)-\mu(s,u)-\mu(u,t)\vert
	&\le& \sideset{}{'}\sum_{(\sigma)}\vert\langle\bx_{su},\sigma'\rangle\langle\bx_{ut},\sigma''\rangle\vert\\
	&\le& \sideset{}{'}\sum_{(\sigma)}q_\gamma(\sigma')c^{|\sigma'|}|u-s|^{\gamma|\sigma'|}q_\gamma(\sigma'')c^{|\sigma''|}|t-u|^{\gamma|\sigma''|}.\\
	\ignore{
	&\le& c^{|\sigma|}(2^{\gamma|\sigma|}-2)q_\gamma(\sigma)\mop{sup}_{0\le a\le \gamma|\sigma|}\hskip 2mm\mop{sup}_{u\in[\smop{min}(s,t),\,\smop{max}(s,t)]}|t-u|^a|u-s|^{\gamma|\sigma|-a}\\
	&\le & c^{|\sigma|}(2^{\gamma|\sigma|}-2)q_\gamma(\sigma)\mop{sup}_{0\le a\le \gamma|\sigma|} \left(\frac{a}{\gamma|\sigma|}\right)^a\left(\frac{\gamma|\sigma|-a}{\gamma|\sigma|}\right)^{\gamma|\sigma|-a}|t-s|^{\gamma|\sigma|}\\
	&\le&c^{|\sigma|}(2^{\gamma|\sigma|}-2)\textcolor{blue}{2^{-\gamma\sigma}\hskip -7mm //\hskip 4mm}q_\gamma(\sigma)|t-s|^{\gamma|\sigma|}.}
\end{eqnarray*}
Here we have chosen the constant $c$ such that the estimates \eqref{estimate-gamma-rough} hold for any $\sigma\in\mathcal B_n,\, n\le N$. This is possible since the sets $\mathcal B_n$ are finite. It follows from $\gamma|\sigma|=\gamma(N+1)=1+\varepsilon$ and the Sewing Lemma (Proposition \ref{prop:fdpsewing}) that there exists a unique map $\varphi$ defined on $\mathbb R$, up to an additive constant, such that:
\begin{align}
\label{estimate-phi}
	\vert\varphi(t)-\varphi(s)-\mu(s,t)\vert
	&\le \frac{c^{|\sigma|}}{2^{\gamma|\sigma|}-2}\sum_{(\sigma)}q_\gamma(\sigma')q_\gamma(\sigma'')|t-s|^{\gamma|\sigma|}\\
	&\phantom{==} =c^{|\sigma|}q_\gamma(\sigma)|t-s|^{\gamma|\sigma|}. 
\end{align}
Now defining:
\begin{eqnarray}
	\langle\wt\bx_{st},\sigma\rangle &:=& 
	\begin{cases} 
	\langle\bx_{st},\sigma\rangle \hbox{,\ for }\sigma\in\Cal H_n,\, n\le N,\\
	\varphi(s)-\varphi(t)-\mu(s,t) \hbox{,\ for }\sigma\in\Cal H_{N+1},\label{def-xtilde}
	\end{cases}
\end{eqnarray}
we immediately get from \eqref{mu}:
\begin{equation}
\label{chen-N-plus-one}
	\langle\wt\bx_{st}-\wt\bx_{su}-\wt\bx_{ut},\,\sigma\rangle 
	=\sideset{}{'}\sum_{(\sigma)}\langle\bx_{su},\sigma'\rangle\langle\bx_{ut},\sigma''\rangle.
\end{equation}
From \eqref{estimate-phi} and \eqref{def-xtilde} we then have
\begin{equation}
\label{estimate-wtxst}
	\vert\langle\wt\bx_{st},\sigma\rangle\vert\le c^{|\sigma|}q_\gamma(\sigma)|t-s|^{\gamma|\sigma|}.
\end{equation}
Let us now check item \eqref{item-one-truncated} in Definition \ref{rough-truncated} for $\wt\bx_{st}$ for any $\sigma\in\Cal H_p$ and $\tau\in\Cal H_q$ with $p+q=N+1$. We split the interval $[s,t]$ (or $[t,s]$) into $k$ sub-intervals $[s_j,s_{j+1}]$ of equal length $|t-s|/k$, with $s_0:=\mop{inf}(s,t)$ and $s_k:=\mop{sup}(s,t)$. From Chen's lemma up to degree $N+1$ stemming from \eqref{chen-N-plus-one}, we can compute, supposing $s\le t$ here:
\begin{align*}
	\lefteqn{\langle\wt\bx_{st},\sigma\tau\rangle-\langle\wt\bx_{st},\sigma\rangle\langle\wt\bx_{st},\tau\rangle}\\
	&=\langle\wt\bx_{s_0s_1}\ast\cdots\ast\wt\bx_{s_{k-1}s_k},\,\sigma\tau\rangle
	-\langle\wt\bx_{s_0s_1}\ast\cdots\ast\wt\bx_{s_{k-1}s_k},
	\,\sigma\rangle\langle\wt\bx_{s_0s_1}\ast\cdots\ast\wt\bx_{s_{k-1}s_k},\,\tau\rangle\\
	&=\sum_{(\sigma),(\tau)}\bigg(\prod_{j=0}^{k-1}\langle \wt\bx_{s_js_{j+1}},\sigma_j\tau_j\rangle-\prod_{j=0}^{k-1}\langle
	 \wt\bx_{s_js_{j+1}},\sigma_j\rangle\langle \wt\bx_{s_js_{j+1}},\tau_j\rangle\bigg).
\end{align*}
The term under the summation sign vanishes unless there is a $j \in \{0,\ldots,k-1\}$ such that $\sigma_j=\sigma$ and $\tau_j=\tau$, in which case we have $\sigma_i=\tau_i=\bm 1$ for $i\neq j$. Hence,
\begin{equation*}
	\langle\wt\bx_{st},\sigma\tau\rangle-\langle\wt\bx_{st},\sigma\rangle\langle\wt\bx_{st},\tau\rangle
	=\sum_{j=0}^{k-1}
	\langle\wt\bx_{s_js_{j+1}},\sigma\tau\rangle-\langle\wt\bx_{s_js_{j+1}},\sigma\rangle\langle\wt\bx_{s_js_{j+1}},\tau\rangle.
\end{equation*}
From \eqref{estimate-wtxst} and item \eqref{item-three-truncated} of Definition \ref{rough-truncated} we get
\begin{equation*}
	\vert\langle\wt\bx_{st},\sigma\tau\rangle-\langle\wt\bx_{st},\sigma\rangle\langle\wt\bx_{st},\tau\rangle\vert
	\le kc^{|\sigma|}q_\gamma(\sigma)\left \vert \frac{t-s}{k}\right\vert^{1+\varepsilon}
	= k^{-\varepsilon} c^{|\sigma|}q_\gamma(\sigma)\vert t-s\vert^{1+\varepsilon}.
\end{equation*}
Hence $\langle\wt\bx_{st},\sigma\tau\rangle=\langle\wt\bx_{st},\sigma\rangle\langle\wt\bx_{st},\tau\rangle$ by letting $k$ go to $+\infty$. The same argument works \textsl{mutatis mutandis} in the case $s>t$. Hence, estimate \eqref{estimate-gamma-rough} is proven for $\langle \wt\bx,\sigma\rangle$ for any $\sigma\in\mathcal B_n,\, n\le N+1$.\\

Uniqueness can be proven by a similar argument. Indeed, suppose that $\overline \bx$ is another $(N+1)$-truncated $\gamma$-regular $\Cal H$-rough path extending $\bx$, and let $\delta_{st}:=\wt\bx_{st}-\overline\bx_{st}$ for any $s,t\in\mathbb R$. For any $\sigma\in\Cal H_{N+1}$ we have then the following:
\allowdisplaybreaks
\begin{eqnarray*}
	\langle \delta_{st},\sigma\rangle
	&=&\langle \wt\bx_{ss_1}\ast\cdots\ast \wt\bx_{s_{k-1}t}
	-\overline\bx_{ss_1}\ast\cdots\ast \overline\bx_{s_{k-1}t},\,\sigma\rangle\\
	&=&\langle \delta_{ss_1}+\cdots+\delta_{s_{k-1}t},\,\sigma\rangle.
\end{eqnarray*}
As we have $\vert\langle \delta_{s_js_{j+1}},\sigma\rangle\vert\le \overline C \vert \frac{t-s}{k}\vert^{1+\varepsilon}$ for some constant $\overline C$, we get $\vert\langle \delta_{st},\sigma\rangle\vert\le \overline C \vert t-s\vert^{1+\varepsilon}k^{-\varepsilon}$, hence $\langle \delta_{st},\sigma\rangle=0$ by letting $k$ go to infinity.

Iterating this process at any order finally yields a fully-fledged $\gamma$-regular $\Cal H$-rough path $\wt\bx$ extending $\bx$. 
\end{proof}

\begin{remark}\rm
Considering the striking similarity of the map $q_\gamma$ with the inverse-factorial character, which is nothing but $q_\gamma$ for $\gamma=1$, Gubinelli conjectured \cite[Remark 7.4]{Gubi2010} the following comparison, in the special case of the Butcher--Connes--Kreimer Hopf algebra (corresponding to branched rough paths):
\begin{equation}\label{estimate-gubi}
	q_\gamma(\sigma)\le \frac{BC^{|\sigma|}}{(\sigma!)^\gamma}
\end{equation}
for any $\sigma$ in $\mathcal B$ (i.e., any decorated forest in this particular case of $\Cal H_{\smop{BCK}}^A$), where $B$ and $C$ are positive constants. This conjecture has been recently proven by H.~Boedihardjo \cite{B17}. In the case of the shuffle Hopf algebra (corresponding to geometric rough paths), it happens to be a consequence of Lyons' neoclassical inequality (\cite[Theorem 2.1.1]{L98}, see also \cite[Remark 7.4]{Gubi2010}). It would be interesting to prove a similar result for a general class of combinatorial Hopf algebra, in particular for the Hopf algebra of Lie group integrators $\Cal H_{\smop{MKW}}^A$ defined in paragraph \ref{sect:hmkw} below, corresponding to the notion of rough paths we call \textsl{planarly branched rough paths}, defined in Paragraph \ref{sect:pbrp-def}. Our definition of $q_\gamma$ differs form that of Gubinelli's in the initial conditions $q_\gamma(\sigma)=q(\sigma)$ versus $q_\gamma^{\smop{Gub}}(\sigma)=1$ for any $\sigma\in \mathcal B_n$, $\,n\le N$. This choice is dictated by the functorial considerations of Paragraph \ref{sect:rpcha} below. In practice, one has very often $q(\sigma)\le 1$ for any element $\sigma\in \mathcal B_n,$ $n\le N$, which yields $q_\gamma(\sigma)\le q_\gamma^{\smop{Gub}}(\sigma)$ for any $\sigma\in\mathcal B$ by induction. Hence the majorations for $q_\gamma^{\smop{Gub}}$ obtained in \cite{B17} in the branched case also hold for our $q_\gamma$.
\end{remark}


\subsection{Factorial decay estimates}
\label{ssect:factdecay}

The linear map $q_\gamma$ defined in the statement of Theorem \ref{extension-generalized} is uniquely defined by $q_\gamma(\sigma)=1$ for any $\sigma\in\mathcal B_1\cup\{\mathbf 1\}$ and the recursive equations
\begin{eqnarray*}
	q_\gamma(\sigma):= 
	\begin{cases}
	\frac{1}{2^{|\sigma|}}q_\gamma* q_\gamma(\sigma) \hbox{,\ for }2\le|\sigma|\le N,\\
	\frac{1}{2^{\gamma|\sigma|}}q_\gamma* q_\gamma(\sigma)\hbox{,\ for }|\sigma|\ge N+1.
	\end{cases}
\end{eqnarray*}
As a consequence, $q_\gamma$ has the same functorial properties than the inverse-factorial character $q$, namely if $(\Cal H,\Cal B)$ and $(\Cal H', \Cal B')$ are two connected graded Hopf algebras and if $\Phi:\Cal H\to\Cal H'$ is a morphism of Hopf algebras preserving the degree, then we have for any $\gamma\in]0,1]$, with self-explanatory notations:
\begin{equation}
\label{funct-qgamma}
	q_{\gamma}=q'_{\gamma}\circ \Phi.
\end{equation}

\begin{proposition}\cite[Remark 7.4]{Gubi2010}
Let $\Cal H^A_{\sshu}$ be the shuffle Hopf algebra on a finite alphabet $A$. Then the following estimates hold: for any word $w\in A^*$,
\begin{equation}
\label{estimate-qgamma-shuffle}
	|q_\gamma(w)|\le \frac{C_\gamma^{|w|-1}}{(|w|!)^\gamma}
\end{equation}
where $C_\gamma$ is a positive real number depending only on $\gamma$.
\end{proposition}

\begin{proof}
Recall Gubinelli's variant of Lyons' neo-classical inequality: there exists $c_\gamma>0$ such that
\begin{equation}
\label{neoclassical}
	\sum_{k=0}^n\frac{a^{\gamma k}b^{\gamma(n-k)}}{(k!)^\gamma[(n-k)!]^\gamma}
	\le c_\gamma\frac{(a+b)^{n\gamma}}{(n!)^\gamma}.
\end{equation}
Now set 
$$
	C_\gamma:=\mop{sup}\left(1,\,\frac{2^{\gamma(N+1)}}{2^{\gamma(N+1)}-2}c_\gamma\right),
$$
and proceed by induction on the length of the word $w$. The case $|w|\le N$ being obvious, suppose $|w|\ge N+1$. We can compute, using \eqref{neoclassical} in the particular case $a=b=1$:
\begin{eqnarray*}
	q_\gamma(w)	
	&=&\frac{1}{2^{\gamma|w|}-2}\sideset{}{'}\sum_{(w)}q_\gamma(w')q_\gamma(w'')\\
	&\le&\frac{1}{2^{\gamma|w|}-2}\sideset{}{'}\sum_{(w)}\frac{C_\gamma^{|w'|-1}}{(|w'|!)^\gamma}\frac{C_\gamma^{|w''|-1}}{(|w''|!)^\gamma}\\
	&\le&\frac{1}{2^{\gamma|w|}-2}\sum_{(w)}\frac{C_\gamma^{|w_1|-1}}{(|w_1|!)^\gamma}\frac{C_\gamma^{|w_2|-1}}{(|w_2|!)^\gamma}\\
	&\le& \frac{C_\gamma^{|w|-2}}{2^{\gamma|w|}-2}\, c_\gamma \frac{2^{|w|\gamma}}{(|w|!)^\gamma}\\
	&\le & \frac{C_\gamma^{|w|-1}}{(|w|!)^\gamma}.
\end{eqnarray*}
\end{proof}

\begin{corollary}\label{estimate-qgamma-comb}
Let $(\Cal H,\Cal B)$ be a combinatorial Hopf algebra endowed with a combinatorial morphism $\Phi:(\Cal H,\Cal B)\to (\Cal H^A_{\sshu}, \Cal B')$, where $A$ is a finite alphabet and $\Cal B'=A^*$ is the standard basis of words. Then for any $\sigma\in\Cal B$ the following estimate holds:
\begin{equation}\label{estimate-qgamma-general}
	|q_\gamma(\sigma)|\le C_\gamma^{|\sigma|-1}\frac{(|\sigma|!)^{1-\gamma}}{\sigma!}
\end{equation}
with the same $C_\gamma$ as above.
\end{corollary}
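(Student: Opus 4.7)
The plan is to combine the functoriality statement \eqref{funct-qgamma} with the shuffle-algebra estimate \eqref{estimate-qgamma-shuffle}, and then convert the resulting sum of multiplicities into the factorial ratio using the analogous functoriality for $\gamma=1$ (i.e.\ for the inverse-factorial character $q$ itself).

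Concretely, write $\Phi(\sigma)=\sum_{w\in A^*}b^\sigma_w\,w$ with non-negative integers $b^\sigma_w$ (since $\Phi$ is combinatorial). As $\Phi$ preserves the grading, only words $w$ with $|w|=|\sigma|$ contribute. By functoriality \eqref{funct-qgamma} applied to $\gamma$ and then the shuffle bound \eqref{estimate-qgamma-shuffle},
\begin{equation*}
|q_\gamma(\sigma)|
=\Bigl|\sum_{w}b^\sigma_w\,q'_\gamma(w)\Bigr|
\le\sum_{w}b^\sigma_w\,|q'_\gamma(w)|
\le\frac{C_\gamma^{|\sigma|-1}}{(|\sigma|!)^\gamma}\sum_{w}b^\sigma_w,
\end{equation*}
where the last sum is finite because $\mathcal H_{|\sigma|}$ is finite-dimensional (property (1) of a combinatorial Hopf algebra).

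It remains to identify $\sum_w b^\sigma_w$. Specialising \eqref{funct-qgamma} to $\gamma=1$ (so that $q_\gamma$ is the inverse-factorial character $q$, and $q'$ is the inverse-factorial character on $\mathcal H^A_{\sshu}$, whose value on a word $w$ is $1/|w|!$ by the binomial computation recalled in Section \ref{sect:inv-fact}), one has
\begin{equation*}
\frac{1}{\sigma!}=q(\sigma)=q'\bigl(\Phi(\sigma)\bigr)=\sum_w b^\sigma_w\,q'(w)=\frac{1}{|\sigma|!}\sum_w b^\sigma_w,
\end{equation*}
so that $\sum_w b^\sigma_w=|\sigma|!/\sigma!$. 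Substituting yields exactly \eqref{estimate-qgamma-general}.

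The only real point to check is that the two functorialities (of $q$ and of $q_\gamma$) are both available here: the first is \eqref{funct}, the second is \eqref{funct-qgamma}, and both apply because $\Phi$ is a degree-preserving Hopf algebra morphism. Non-negativity of the coefficients $b^\sigma_w$ is essential in passing from an equality to an inequality after taking absolute values; this is guaranteed by the definition of a combinatorial morphism. There is no serious obstacle; the argument is essentially a two-line chain of equalities and inequalities, with the slightly delicate observation being that the \emph{same} multiplicity vector $(b^\sigma_w)_w$ governs both the $\gamma=1$ identity (which computes $\sigma!$) and the general-$\gamma$ estimate.
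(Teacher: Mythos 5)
Your argument is correct and gives exactly \eqref{estimate-qgamma-general}, by a more direct route than the one in the paper. Both proofs start from the same ingredients --- functoriality of $q_\gamma$ to expand $q_\gamma(\sigma)=\sum_w b_w^\sigma\,q'_\gamma(w)$, the shuffle-algebra bound \eqref{estimate-qgamma-shuffle}, non-negativity of the $b_w^\sigma$, and the normalisation $\sum_w b_w^\sigma=|\sigma|!/\sigma!$ deduced from the $\gamma=1$ functoriality \eqref{funct} --- but diverge in how they combine them. You apply the triangle inequality directly and observe that, $\Phi$ being degree-preserving, every word $w$ in the support of $\Phi(\sigma)$ has length $|\sigma|$, so the factor $C_\gamma^{|w|-1}/(|w|!)^\gamma$ is constant over the sum and can be pulled out, leaving only $\sum_w b_w^\sigma$. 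The paper instead raises $|q_\gamma(\sigma)|$ to the power $1/\gamma$, reads the rescaled multiplicities $\{\tfrac{\sigma!}{|\sigma|!}b_w^\sigma\}_w$ as a probability measure, invokes monotonicity of $L^p$ norms (i.e.\ Jensen's inequality for the convex map $x\mapsto x^{1/\gamma}$), substitutes the per-word bound, and then takes $\gamma$-th roots, landing on the same estimate. Because the shuffle bound being substituted is uniform over words of the fixed length $|\sigma|$, the Jensen detour neither gains nor loses anything in the final estimate, so your version is shorter and more transparent; the convexity step would only become relevant if one had word-dependent bounds on $q'_\gamma(w)$ that one wished to aggregate more finely.
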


\begin{proof}
For any $\sigma\in\Cal B$ we have $\Phi(\sigma)=\sum_{w\in A^*}b_w^\sigma w$, and we have by functoriality of the inverse factorial:
\begin{equation*}
	\sum_{w\in A^*}b_w^\sigma=\frac{|\sigma|!}{\sigma!}.
\end{equation*}
The proof relies on a simple computation using functoriality of $q_\gamma$ as well as the non-negativity of the coefficients $b_w^\sigma$, together with the fact that the $L^p$ norms are nondecreasing with respect to  $p\in]0,1]$ for probability measures:
\allowdisplaybreaks
\begin{eqnarray*}
	|q_\gamma(\sigma)|^{1/\gamma}
	&=&\left(\sum_{w\in A^*}b_w^\sigma q_\gamma(w)\right)^{1/\gamma}\\
	&=&\left(\frac{|\sigma|!}{\sigma!}\right)^{1/\gamma}\left(\frac{\sigma!}{|\sigma|!}\sum_{w\in A^*}b_w^\sigma q_\gamma(w)\right)^{1/\gamma}\\
	&\le & \left(\frac{|\sigma|!}{\sigma!}\right)^{1/\gamma-1}\sum_{w\in A^*}b_w^\sigma q_\gamma(w)^{1/\gamma}\\
	&\le& \left(\frac{|\sigma|!}{\sigma!}\right)^{1/\gamma-1}\sum_{w\in A^*}C_\gamma^{(|w|-1)/\gamma}\frac{b_w^\sigma}{|w|!}\\
	&\le&C_\gamma^{(|\sigma|-1)/\gamma}\left(\frac{|\sigma|!}{\sigma!}\right)^{1/\gamma-1}\frac{1}{\sigma!}\\
	&\le&\left(C_\gamma^{|\sigma|-1}\frac{(|\sigma|!)^{1-\gamma}}{\sigma!}\right)^{1/\gamma}.
\end{eqnarray*}
\end{proof}
We remark that H.~Boedihardjo recently obtained a much better estimate in the context of Gubinelli's branched rough paths, i.e., for the Butcher--Connes--Kreimer Hopf algebra, see \cite[Theorem 4]{B17}.


\subsection{Rough paths and combinatorial Hopf algebras}
\label{sect:rpcha}

We shall examine further properties of rough paths in the generalised sense given in Paragraph \ref{sect:rpha}, i.e., when the Hopf algebra at hand is combinatorial.

\begin{proposition}\label{funct-rough}
\begin{enumerate}
	\item Let $(\mathcal H,\mathcal B)$ be a combinatorial Hopf algebra in the sense of Paragraph \ref{sect:cha} and let $q$ be the associated inverse factorial character. Then $q(x)=\frac{1}{x!}$ is a (possibly vanishing) non-negative rational number for any $x\in\mathcal B$.

	\item Let $(\mathcal H,\mathcal B)$ and $(\mathcal H',\mathcal B')$ be two combinatorial Hopf algebras, and let $\Phi:(\mathcal H,\mathcal B)\to(\mathcal H',\mathcal B')$ be a combinatorial Hopf algebra morphism. Then the pull-back $\mathbb X_{st}:=\mathbb X'_{st}\circ\Phi$ of any $\gamma$-regular $\mathcal H'$-rough path $\mathbb X'_{st}$ is a $\gamma$-regular $\mathcal H$-rough path.
\end{enumerate}
\end{proposition}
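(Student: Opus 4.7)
The proposition splits into two essentially independent verifications, and the plan is to treat them in turn.

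\textbf{Part (1): rationality and non-negativity of $q$.} I would prove this by induction on the degree $n=|x|$, using the recursion \eqref{rec-inv-fact}. For $n=0$ one has $q(\mathbf{1})=1$, and for $n=1$ the hypothesis $q(\tau)=1$ for $\tau\in\mathcal{B}_1$ settles the base case. For $n\ge 2$, expand the reduced coproduct in the basis as
\[
\Delta'(x)=\sideset{}{'}\sum_{\sigma,\tau\in\mathcal{B}} c_x^{\sigma\tau}\,\sigma\otimes\tau,
\]
with $c_x^{\sigma\tau}\in\mathbb{Z}_{\ge 0}$ by the combinatorial-Hopf-algebra axiom; only terms with $1\le|\sigma|,|\tau|\le n-1$ contribute. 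By induction each $q(\sigma)$ and $q(\tau)$ is a non-negative rational, the prefactor $1/(2^n-2)$ is a positive rational for $n\ge 2$, hence $q(x)\in\mathbb{Q}_{\ge 0}$. This immediately gives $x!=1/q(x)\in\mathbb{Q}_{>0}\cup\{+\infty\}$, the latter value occurring precisely on primitives of degree $\ge 2$.

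\textbf{Part (2): pull-back along a combinatorial morphism.} I would check the three items of Definition \ref{rough} one by one for $\mathbb{X}_{st}:=\mathbb{X}'_{st}\circ\Phi$. The normalisation $\langle\mathbb{X}_{st},\mathbf{1}\rangle=1$ follows from $\Phi(\mathbf{1})=\mathbf{1}$. Multiplicativity (\ref{lyons-one}) comes directly from the fact that $\Phi$ is an algebra morphism:
\[
\langle\mathbb{X}_{st},\sigma\tau\rangle
=\langle\mathbb{X}'_{st},\Phi(\sigma)\Phi(\tau)\rangle
=\langle\mathbb{X}'_{st},\Phi(\sigma)\rangle\langle\mathbb{X}'_{st},\Phi(\tau)\rangle
=\langle\mathbb{X}_{st},\sigma\rangle\langle\mathbb{X}_{st},\tau\rangle.
\]
Chen's lemma (\ref{lyons-two}) is a straightforward consequence of the coalgebra morphism property $\Delta_{\mathcal{H}'}\circ\Phi=(\Phi\otimes\Phi)\circ\Delta_{\mathcal{H}}$: unwinding the convolution,
\[
\langle\mathbb{X}_{su}*\mathbb{X}_{ut},\sigma\rangle
=\sum_{(\sigma)}\langle\mathbb{X}'_{su},\Phi(\sigma_1)\rangle\langle\mathbb{X}'_{ut},\Phi(\sigma_2)\rangle
=\langle\mathbb{X}'_{su}*\mathbb{X}'_{ut},\Phi(\sigma)\rangle
=\langle\mathbb{X}'_{st},\Phi(\sigma)\rangle.
\]

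\textbf{The estimate.} This is the only item that really uses the \emph{combinatorial} nature of the morphism. Since $\Phi$ has degree zero and maps $\mathcal{B}$ to $\mathbb{Z}_{\ge 0}$-linear combinations of $\mathcal{B}'$, for $\sigma\in\mathcal{B}_n$ one can write $\Phi(\sigma)=\sum_{\tau\in\mathcal{B}'_n}m^\tau_\sigma\,\tau$ with $m^\tau_\sigma\in\mathbb{Z}_{\ge 0}$; the sum is finite because $\dim\mathcal{H}'_n<\infty$ (axiom (1) of a combinatorial Hopf algebra). Then
\[
\bigl|\langle\mathbb{X}_{st},\sigma\rangle\bigr|
\le\sum_{\tau\in\mathcal{B}'_n}m^\tau_\sigma\,\bigl|\langle\mathbb{X}'_{st},\tau\rangle\bigr|
\le\Bigl(\sum_{\tau}m^\tau_\sigma\,C(\tau)\Bigr)|t-s|^{\gamma n},
\]
so (\ref{lyons-three}) holds with $C(\sigma):=\sum_{\tau}m^\tau_\sigma C(\tau)<\infty$. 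No step here is a real obstacle; the only point deserving attention is that the non-negativity of the $m^\tau_\sigma$ is what allows the triangle inequality to be bounded cleanly, and that the finiteness of $\mathcal{B}'_n$ is needed for the resulting constant to be finite, which is precisely where axiom (1) of the combinatorial Hopf algebra framework enters.
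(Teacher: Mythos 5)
Your proof is correct, and its overall structure — verify part (1) by induction using the recursion \eqref{rec-inv-fact}, then check multiplicativity, Chen's lemma, and the estimate for part (2) — matches the paper's. The only real difference is in how you handle the estimate. You simply take the (unspecified) constants $C(\tau)$ of $\mathbb{X}'$ from Definition \ref{rough}, item \eqref{lyons-three}, and set $C(\sigma):=\sum_{\tau}m^\tau_\sigma C(\tau)$; since $\mathcal{B}'_n$ is finite by axiom (1), this is a finite non-negative constant and nothing more is needed to satisfy the definition literally. The paper instead invokes the sharper decay estimate $\vert\langle\mathbb X'_{st},y\rangle\vert\le C^{|y|}q_\gamma(y)\vert t-s\vert^{\gamma|y|}$ from Theorem \ref{extension-generalized} and then appeals to the functoriality of $q_\gamma$ (the analogue of \eqref{funct}, stated as \eqref{funct-qgamma}) to conclude $\vert\langle\mathbb X_{st},x\rangle\vert\le C^{|x|}q_\gamma(x)\vert t-s\vert^{\gamma|x|}$. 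Your route is more elementary and proves exactly the stated proposition; the paper's route does slightly more work but delivers, as a byproduct, the uniform $q_\gamma$-decay estimate that is reused later in the convergence argument of Section \ref{sec:RDEHom}. Both are valid, and you correctly flagged the two points where the combinatorial hypotheses actually matter: non-negativity of the structure constants $m^\tau_\sigma$ of $\Phi$ for the triangle inequality, and finite-dimensionality of the graded pieces for finiteness of the resulting constant.
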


\begin{proof}
Recall that $q(x)=1$ for any $x\in \mathcal B_1$. The first assertion is then recursively derived from equation \eqref{rec-inv-fact}. Multiplicativity as well as Chen's Lemma are immediate consequences of the fact that $\Phi$ is a Hopf algebra morphism. We now check the estimate for any $x\in\mathcal B_n$ with $n\ge 0$:
\begin{eqnarray*}
	\vert\langle\mathbb X_{st},\,x\rangle \vert
	&=&\vert\langle\mathbb X'_{st},\,\Phi(x)\rangle\vert\\
	&\le&\sum_{y\in\mathcal B'_n}b^x_y\vert\langle\mathbb X'_{st},\,y\rangle\vert\\
	&\le&C^n\sum_{y\in\mathcal B'_n}q_\gamma(y)b^x_y\vert t-s\vert^{\gamma n}.
\end{eqnarray*}
The proof of functoriality of the inverse factorial character \eqref{funct} can be easily adapted to its counterpart $q_\gamma$, although it is generally not a character when $\gamma$ differs from $1$. Hence we can derive the desired estimate:
\begin{equation}
	\vert\langle\mathbb X_{st},\,x\rangle \vert\le {C^n}q_\gamma(x)\vert t-s\vert^{\gamma n}.
\end{equation}

Note that we have used the non-negativity of the coefficients $b^x_y$ of the matrix of $\Phi$ expressed in the bases $\mathcal B$ and $\mathcal B'$.
\end{proof}



\section{Lie--Butcher theory}
\label{sec:flows}

Butcher's \emph{B-series} are a special form of Taylor expansion indexed by trees. They have become a fundamental tool for analysing numerical integration algorithms. The numerical analysis of general Lie group methods requires the generalisation of the $B$-series theory to so-called Lie--Butcher series, which are based on planar rooted forest, possibly decorated.


\subsection{Rooted trees and forests}
\label{ssect:trees}

For any positive integer $n$, a rooted tree of degree $n$ is a finite oriented tree with $n$ vertices. One of them, called the root, is a distinguished vertex without any outgoing edge. Any vertex can have arbitrarily many incoming edges, and any vertex other than the root has exactly one outgoing edge. Vertices with no incoming edges are called leaves. A planar rooted tree is a rooted tree together with an embedding in the plane. A planar rooted forest is a finite ordered collection of planar rooted trees. Here are the planar rooted forests up to four vertices:
$$\emptyset\hskip 7mm
	\racine\hskip 7mm
	\arbrea\hskip 3mm 
	\racine\racine\hskip 7mm
	\arbreba\hskip 3mm
	\arbrebb\hskip 3mm
	\arbrea\racine\hskip 3mm
	\racine\arbrea\hskip 3mm
	\racine\racine\racine\hskip 7mm
	\arbreca\hskip 3mm
	\arbrecb\hskip 3mm
	\arbrecc\hskip 3mm
	\arbreccc\hskip 3mm
	\arbrecd\hskip 3mm
	\arbreba\racine\hskip 3mm
	\racine\arbreba\hskip 3mm
	\arbrebb\racine\hskip 3mm
	\racine\arbrebb\hskip 3mm
	\arbrea\arbrea\hskip 3mm
	\arbrea\racine\racine\hskip 3mm
	\racine\arbrea\racine\hskip 3mm
	\racine\racine\arbrea\hskip 3mm 
	\racine\racine\racine\racine
$$
Let $A$ be any set. An $A$-decorated planar rooted forest is a pair $\sigma=(\overline\sigma,\varphi)$ where $\overline\sigma$ is a planar forest, and where $\varphi$ is a map from the vertex set $V(\overline\sigma)$ into $A$. We denote by $T_A^{\smop{pl}}$ (respectively $F_A^{\smop{pl}}$) the set of all $A$-decorated planar rooted trees (respectively forests), and by $\Cal{T}_A^{\smop{pl}}$ (respectively $\Cal{F}_A^{\smop{pl}}$) the linear space spanned by the elements of $T_A^{\smop{pl}}$ (respectively $F_A^{\smop{pl}}$).\\

$A$-decorated non-planar rooted forests are denoted by $\wt\sigma=(\overline{\wt\sigma},\varphi)$, where $\varphi$ is the decoration and $\overline{\wt\sigma}$ is the underlying non-planar forest. When $A$ is reduced to one element the notion of $A$-decoration is superfluous. Hence any $A$-decorated forest can be identified with its overlined counterpart.\\

\noindent Every $A$-decorated planar rooted tree can be written as:
\begin{equation}
\label{B+}
	\sigma = B_a^+(\sigma_1\, \cdots \,\sigma_k),
\end{equation}
where $B_a^+$ is the operation on forests which grafts each connected component $\sigma_i$ of a planar rooted forest $\sigma_1 \cdots\sigma_k$ on a common root decorated by $a\in A$. Note that in numerical analysis the bracket notation for $\sigma = [\sigma_1\, \cdots \,\sigma_k]_a$ is often used instead of the $B_a^+$ operator.


\subsection{Post-Lie and post-associative algebras}
\label{ssect:postLie}

A \textsl{left post-Lie algebra} \cite{V2007, KAH2015} is a vector space $A$ (over some field $\mathbf{k}$) together with two bilinear maps $[-,-]$ and $\rhd$ from $A\otimes A$ to $A$ such that
\begin{itemize}
	\item $[-,-]$ is a Lie bracket, i.e., it is antisymmetric and verifies the Jacobi identity.
	\item For any $a,b,c\in A$ we have
$$
	a\rhd [b,c]=[a\rhd b,c]+[a,b\rhd c].
$$

\item For any $a,b,c\in A$ we have
$$
	[a,b]\rhd c=a\rhd(b\rhd c)-(a\rhd b)\rhd c-b\rhd(a\rhd c)+(b\rhd a)\rhd c.
$$
\end{itemize}
\textcolor{blue}{The bracket $[\![-,-]\!]$ defined by $[\![a,b]\!]:=[a,b]+a\rhd b-b\rhd a$ is another Lie bracket on $A$.} The particular case when the Lie bracket $[-,-]$ vanishes on $A$ is referred to as \textsl{left pre-Lie algebra}. See \cite{Manchon2011} for details. Associative counterparts of post-Lie algebras are referred to as \textsl{post-associative algebras}. They first appear under the terminology "D-algebras" in \cite{MunWri2008}. A post-associative algebra is a vector space $B$ endowed with two linear maps $\cdot$ and $\rhd$ from $B\otimes B$ to $B$, a filtration $B^0=\mathbf{k.1}\subset B^1\subset B^2\subset\cdots$ with $B=\bigcup_j B^j$, and an augmentation $\varepsilon: B\to\hskip -3mm\to \mathbf{k}$ such that
\begin{enumerate}

	\item $L_{\mathbf{1}}=\mop{Id}_B$, and $a\rhd \mathbf{1}=0$ for any $a\in\mop{Ker}\varepsilon$.
	
	\item The product $\cdot$ is associative with unit $\mathbf{1}$, and $B^p\cdot B^q\subset B^{p+q}$ for any $p,q\ge 0$.

	\item $A:=B^1\cap\mop{Ker}\varepsilon$ is stable \textcolor{blue}{under the product $\rhd$ as well as} under the Lie bracket obtained by anti-symmetrisation of the associative product, and generates the unital associative algebra $(B,\cdot$).\label{d-alg-gen}

	\item\label{d-alg-der} For any $a,b,c\in B$ with $a\in A$ we have
$$
	a\rhd (b \cdot c)=(a\rhd b)\cdot c+b\cdot (a\rhd c).
$$

\item\label{d-alg-induction} For any $a,b,c\in B$ with $a\in A$ we have
$$
	(a \cdot b)\rhd c=a\rhd(b\rhd c)-(a\rhd b)\rhd c.
$$
\end{enumerate}
In particular, $A$ is a post-Lie algebra. The other way round, the enveloping algebra of a post-Lie algebra is a post-associative algebra.

\smallskip

\noindent The \textsl{Grossman--Larson product} on a post-associative algebra is \textcolor{blue}{characterised} by the identity:
\begin{equation}
\label{gl-product}
	(a*b)\rhd c=a\rhd(b\rhd c)
\end{equation}
for any $a,b,c\in B$, in other words $L_{a*b}=L_a\circ L_b$. \textcolor{blue}{It is defined as follows: the map $M:A\to \mop{Der}\mathcal U(A,[-,-])$ defined by $M_ab:=ab+a\rhd b$ is easily seen to verify
$$
	[M_a,M_b]=M_{[\hskip -1pt [a,b]\hskip -1pt]}.
$$
Hence $M$ yields an associative algebra morphism, still denoted by $M$, from $\mathcal U(A,[\![-,-]\!])$ to $\mop{End}\mathcal U(A,[-,-])$. Using for example a Poincar\'e--Birkhoff--Witt basis, one can see that the morphism
\begin{eqnarray*}
	\Theta:\mathcal U(A,[\![-,-]\!])&\longrightarrow &\mathcal U(A,[-,-])\\
	v&\longmapsto& M_v.\mathbf{1}
\end{eqnarray*}
of $\mathcal U(A,[\![-,-]\!])$-modules is bijective, which yields a new associative product $*$ on $\mathcal U(A,[-,-])$ given by $u*v:=\Theta\big(\Theta^{-1}(u).\Theta^{-1}(v)\big)$. Now the identity of $A$ extends to a unique surjective post-associative algebra morphism $\kappa:\mathcal U(A,[-,-])\to\hskip -5pt \to B$. The ideal $\mop{Ker}\kappa$ is stable by both products $.$ and $\rhd$, hence by the product $*$, which thus descends to the quotient $B$.} In particular, for any $a,b,c\in B^1$ we have $a*b=ab+a\rhd b,$ and 
$$ 
	a*b*c=a \cdot b \cdot c+(a\rhd b) \cdot c+b \cdot (a\rhd c)+a \cdot (b\rhd c)+(a \cdot b)\rhd c+(a\rhd b)\rhd c.
$$

An important example of post-Lie algebra is given by $C^\infty(\Cal M,\mathfrak g)$. We suppose that the Lie group $G$, with Lie algebra $\mathfrak g$, acts transitively on the smooth manifold $\Cal M$. Any smooth map $f\in C^\infty(\Cal M,\mathfrak g)$ defines a smooth vector field $\# f$ on $\Cal M$ (i.e., a derivation of $C^\infty(\Cal M)$) via:
\begin{equation}
\label{vector-field}
	\# f(g):=\frac{d}{dt}{\restr{t=0}}\,g\Big(\exp \big(tf(x)\big).x\Big).
\end{equation}
In the language of Lie algebroids, considering the tangent vector bundle and the trivial vector bundle $E=\Cal M\times\mathfrak g$, the map $\#:C^\infty(\Cal M,\mathfrak g)\to\mop{Der} C^\infty(\Cal M)$ is the composition on the left with the anchor map $\rho: E\to T\Cal M$ defined by $\rho (x,X):=\frac{d}{dt}{\restr{t=0}}(\exp tX).x$.\\

Formula \eqref{vector-field} also makes sense for $g\in C^\infty(\Cal M,\mathfrak g)$ or $g\in C^\infty\big(\Cal M,\mathcal U(\mathfrak g)\big)$. It is shown in \cite{MunWri2008} that $C^\infty\big(\Cal M,\mathcal U(\mathfrak g)\big)$, endowed with the pointwise product in $\mathcal U(\mathfrak g)$ as well as the product $\rhd$ given by $f\rhd g:=\# f(g)$ is a post-associative algebra. The Grossman--Larson product reflects the composition of differential operators, in the sense that we have
$$
	\#(f*g)=\#f\circ\#g.
$$
Similarly $C^\infty(\Cal M,\mathfrak g)$, endowed with the pointwise Lie bracket in $\mathfrak g$ and the product $\rhd$ given by $f\rhd g:=\# f(g)$ is a post-Lie algebra.


\subsection{Free post-Lie algebras}
\label{ssect:freepostLie}

It is proven in \cite{MunWri2008} that the free post-associative algebra $\mathcal D_A$ generated by the set $A$ is the algebra of $A$-decorated planar forests endowed with concatenation and left grafting. The latter is defined for any $A$-decorated planar rooted tree $\sigma$ and forest $\tau$:
\begin{equation}
\label{searrow}
	\sigma \rhd \tau = \sum_{v \smop{vertex of}\tau}{\sigma \searrow_{v} \tau},
\end{equation}
where $\sigma \searrow_{v} \tau$ is the decorated forest obtained by grafting the planar tree $\sigma$ on the vertex $v$ of the planar forest $\tau$, such that $\sigma$ becomes the leftmost branch, starting from vertex $v$, of this new tree. It is also well-known that the usual grafting product ``$\to$" given for \textsl{non-planar} rooted trees $\wt\tau$ by the same formula \eqref{searrow}, satisfies the left pre-Lie identity:
$$ 
	\wt\sigma_1 \to ({\wt\sigma}_2 \to \wt\tau) - (\wt\sigma_1 \to {\wt\sigma}_2)\to \wt\tau
	= {\wt\sigma}_2 \to ({\wt\sigma}_1 \to \wt\tau) - ({\wt\sigma}_2 \to \wt\sigma_1)\to \wt\tau.
$$ 
\textcolor{blue}{One the other hand, the linear span of $A$-decorated planar rooted trees endowed with the operation $\rhd$ of \eqref{searrow} is the free magmatic algebra\footnote{The free magma $(\mathcal M_A,*)$ on a set $A$ is the set of well-parenthesised words with letters in $A$. The binary operation $*$ consists in putting each component between an extra pair of parentheses and concatenating them, e.g. $ab*c(de)=(ab)\big(c(de)\big)$. The free magmatic algebra $\langle \mathcal M_A\rangle$ is the vector space freely generated by $\mathcal M_A$, endowed by the bilinear extension of the product $*$. It is determined up to isomorphism by the universal property so that for any vector space $V$ endowed with a bilinear map $\#:V\times V\to V$ and for any set map $f:A\to V$, there is a unique linear map $\overline f:\langle \mathcal M_A\rangle\to V$ respecting both bilinear products.}, see \cite{EM14}}. The definition of $\sigma \rhd \tau$ when $\sigma$ is a decorated forest is given recursively with respect to the number of connected components of $\sigma$, using axiom \eqref {d-alg-induction}.\\

As a result, the free post-Lie algebra $\mathcal P_A$ generated by $A$ is the free Lie algebra generated by the linear span of $A$-decorated planar rooted trees (see also \cite{V2007}).


\subsection{Lie--Butcher series}
\label{ssect:LieButcher}

Let $\Cal M$ be a homogeneous space under the action of a Lie group $G$ with Lie algebra $\mathfrak g$, and let $f:=\{f_i\}_{i\in A}$ be a collection of smooth maps from $\Cal M$ to $\mathfrak g$ indexed by a set $A$. By freeness property, there is a unique post-Lie algebra morphism $\mathcal F_{f}: \mathcal P_A\to C^\infty(\Cal M,\mathfrak g)$ such that $\mathcal F_f(\racine_i)=f_i$. The vector fields $\#\mathcal F_f(\sigma)$, where $\sigma$ is a planar rooted $A$-decorated tree, are the so-called \textsl{elementary differentials}. Similarly, $\mathcal F_f$ extends uniquely to a post-associative algebra morphism  $\mathcal F_f: \mathcal D_A\to C^\infty\big(\Cal M,\mathcal U(\mathfrak g)\big)$. This extended morphism also respects the Grossman--Larson product of both sides.

\medskip

\noindent A \textsl{Lie--Butcher series} is an element of $C^\infty\big(\Cal M,\mathcal U(\mathfrak g)\big)[[h]]$ given by
\begin{equation}
	LB(\alpha,hf):=\sum_{k\ge 0}\ \sum_{\sigma \in F_{A,k}^{\smop{pl}}}h^k\alpha(\sigma)\mathcal F_{hf}(\sigma),
\end{equation}
where $F_{A,k}^{\smop{pl}}$ is the set of $A$-decorated planar rooted forests with $k$ vertices and $\alpha$ is a linear map from $\mathcal P_A$ to the field $\mathbf k$.


\subsection{Three partial orders on planar forests}
\label{ssec:partialorders}

Let $\overline\sigma$ be any planar rooted forest, and $v$, $w$ be two elements in its vertex set $V(\overline\sigma)$. Define a partial order $<$  on $V(\overline\sigma)$ as follows: $v<w$ if there is a path from one root to $w$ passing through $v$. Roots are the minimal elements, and leaves are the maximal elements. \ignore{The rooted forest $\wt{\sigma}$ (discarding its planar structure) is the Hasse diagram of the poset $(V(\overline\sigma),<)$.}\\

Following \cite{A14}, we define a refinement $\ll$ of this order to be the transitive closure of the relation $R$ defined by: $vRw$ if $v<w$, or both $v$ and $w$ are linked to a third vertex $u \in V(\overline\sigma)$, such that $v$ lies on the right of $w$, like this: 
$$
	\arbrebbLab,
$$
or both $v$ and $w$ are roots, with $v$ on the right of $w$.
\vskip 2mm
A further refinement $\lll$ on $V(\overline\sigma)$ is the total order defined as follows: $v \lll w$ if and only if $v$ occurs before $w$ on a path exploring the rooted forest from right to left, starting from the root of the rightmost connected component:
\vskip 6mm
\begin{eqnarray*}
	&\arbreebz&
\end{eqnarray*}
\centerline{\small{A planar rooted tree with its vertices labelled according to total order\ $\!\lll\!$.}}


\subsection{The Hopf algebra of Lie group integrators}
\label{sect:hmkw}

The universal enveloping algebra over the free post-Lie algebra $\mathcal P_A$ endowed with the Grossman--Larson product and deshuffle coproduct\footnote{Uniquely determined by the fact that any $A$-decorated planar rooted tree is primitive.} is a connected Hopf algebra, graded by the number of vertices. Its graded dual is the Hopf algebra of Lie group integrators $\mathcal H_{\smop{MKW}}^A$ introduced by Munthe-Kaas and Wright \cite{MunWri2008}. The convolution product on $\mathcal L(\mathcal H_{\smop{MKW}}^A,\mathbf k)$ is then the Grossman--Larson product naturally extended to series. The product is the shuffle product of planar forests (where the trees are the letters), and the coproduct is given in terms of left-admissible cuts \cite{MunWri2008}:
\begin{equation}
	\Delta(\tau)=\sum_{V'\sqcup V''= V(\tau) \atop V''\ll V'}(\tau\restr{V'})^{\sshu}\otimes \tau\restr{V''}.
\end{equation}
Here $(\tau\restr{V'})^{\sshu}$ is the shuffle product of the connected components of the poset $(V', \ll\restr{V'})$. Note that the restriction of the partial order $\ll$ to $V'$ is generally weaker than the partial order $\ll$ of the forest $\tau\restr {V'}$: the latter makes the poset $V'$ connected, which is generally not the case for the former.

\smallskip

We note that the Hopf algebra of Lie group integrators $\mathcal H_{\smop{MKW}}^A$, endowed with the basis of $A$-decorated planar forests, is a combinatorial Hopf algebra in the sense of Paragraph \ref{sect:cha}.


\section{Planarly branched rough paths}
\label{sec:MKWHA}

We prove in this section the non-degeneracy of the combinatorial Hopf algebra $\mathcal H_{\smop{MKW}}^A$ of Lie group integrators endowed with the basis of $A$-decorated rooted forests, and we define planarly branched rough paths as $\mathcal H_{\smop{MKW}}^A$-rough paths.


\subsection{Tree and forest factorials, volume computations}
\label{ssect:factorials}

For any $s\le t\in \mathbb R$ and any finite poset $P$ we consider the domain
$$
	\Omega^{st}_P:=\big\{(t_v)_{v\in P},\, s\le t_v\le t \hbox{ and } t_v\ge t_w
	\hbox{ for } v < w\big\}\subset\mathbb R^P.
$$
The factorial of the poset $P$ is uniquely determined  by
\begin{equation}
\label{poset-factorial}
	\mop{Volume}(\Omega_{st}^P)=\frac{|t-s|^{|P|}}{P!}.
\end{equation}
For any \textsl{planar} rooted forest $\tau$ (decorated or not), we set \cite{MF17}:
\begin{equation*}
	\tau!:=\big(V(\tau),\ll\big)!
\end{equation*}
In particular, the factorial of a poset is the product of factorials of its connected components. Note, however, that our definition differs from L.~Foissy's definition given in \cite[Definition 33]{F16}. In particular, our notion of factorial is invariant under the canonical involution reversing the partial order, which is not the case for the poset factorial of \cite{F16}.

\medskip

The factorial ${\wt\tau}!$ of a \textsl{non-planar} rooted forest ${\wt\tau}$ is the factorial of the underlying poset $\big(V({\wt\tau}),<\big)$. The notations will always make clear whether a planar or non-planar forest factorial is considered, hence the common notation $-!$ should not cause any confusion.

\begin{lemma}[\cite{MF17}]\label{factorials-volumes}
Let $\wt\tau$ (respectively $\sigma$) be a non-planar (respectively planar) rooted forest. Then:
\begin{enumerate}

\item For any $s \le t \in \mathbb R$, the volume of the domain $\Omega^{st}_{{\wt\tau}}:=\Omega^{st}_{V({\wt\tau}),<}$ is equal to $\frac{1}{{\wt\tau}!}(t-s)^{\vert {\wt\tau}\vert}$.

\item \label{deux} For any $s \le t \in \mathbb R$, the volume of the domain $\Omega^{st}_{V(\sigma),\ll}$
is equal to $\frac{1}{\sigma !}(t-s)^{\vert \sigma\vert}$.

\item \label{inv-fac-functorial} The following identity holds:
\begin{equation*}
	\frac{1}{{\wt\tau}!} = \mop{Sym}(\wt\tau)\sum_{\sigma\surj{4}{\wt\tau}}\frac{1}{\sigma !},
\end{equation*}
where the sum runs over all the planar representatives $\sigma$ of ${\wt\tau}$, and where $\mop{Sym}(\sigma)$ is the symmetry factor of the planar rooted forest $\sigma$.
\end{enumerate}
\end{lemma}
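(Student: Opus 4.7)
Items (1) and (2) are reformulations of the definition. By the linear change of variables $t_v \mapsto s + (t-s)u_v$, one has $\mop{Volume}(\Omega^{st}_P) = (t-s)^{|P|} \mop{Volume}(\Omega^{01}_P)$, so the characterization \eqref{poset-factorial} determines $P!$ consistently for all $s \le t$. Item (1) is the specialization to $P = (V(\wt\tau),<)$, and item (2) is the specialization to $P = (V(\sigma),\ll)$ combined with the definition $\sigma! := (V(\sigma),\ll)!$.

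The content of the statement lies in item (3), which I plan to prove by exhibiting an almost-disjoint decomposition
\begin{equation*}
\Omega^{st}_{\wt\tau} \;=\; \bigsqcup_{\sigma} \Omega^{st}_{V(\sigma),\ll}
\end{equation*}
modulo measure zero, where the disjoint union is taken over the \emph{labelled planar representatives} of $\wt\tau$, i.e., choices of a linear order on the children of each internal vertex together with a linear order on the set of roots of $\wt\tau$. Given any $(t_v) \in \Omega^{st}_{\wt\tau}$ with pairwise distinct coordinates, I would build a unique labelled planar representative $\sigma(t)$ by ordering, at every vertex and at the root level, the siblings left-to-right in increasing order of their timestamps $t_w$. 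Each generator of the partial order $\ll$ on $V(\sigma(t))$ recalled in Paragraph \ref{ssec:partialorders} --- ancestor relations, right-sibling relations, right-root relations --- then translates into an inequality $t_v \ge t_w$ which holds by construction; by transitivity, $(t_v)\in\Omega^{st}_{V(\sigma(t)),\ll}$. Conversely, membership of $(t_v)$ in $\Omega^{st}_{V(\sigma'),\ll}$ for a different labelled planar rep $\sigma'$ would force a coincidence $t_v = t_w$ for a pair of siblings (or of roots) whose left/right order differs between $\sigma(t)$ and $\sigma'$, contradicting genericity.

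Taking Lebesgue volumes in the decomposition and applying item (2) yields $\dfrac{(t-s)^{|\wt\tau|}}{\wt\tau!} = \sum_{\sigma} \dfrac{(t-s)^{|\sigma|}}{\sigma!}$, where $\sigma$ runs over labelled planar representatives of $\wt\tau$. Because $\sigma!$ depends only on the planar isomorphism class of $\sigma$, one groups labelled representatives into orbits under the natural action of $\mop{Aut}(\wt\tau)$ by relabelling; by orbit-stabiliser, each orbit has cardinality $|\mop{Aut}(\wt\tau)|/|\mop{Aut}^{\mathrm{pl}}(\sigma)|$, which is the symmetry factor $\mop{Sym}(\sigma)$ of the planar rooted forest $\sigma$. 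Rewriting the sum in terms of planar isomorphism classes and dividing by $(t-s)^{|\wt\tau|}=(t-s)^{|\sigma|}$ then gives the claimed identity (3).

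The main obstacle is verifying that the planar structure $\sigma(t)$ attached to a generic $(t_v)$ is indeed a legitimate labelled planar representative for which $(t_v)$ satisfies every $\ll$-inequality arising by transitive closure; this amounts to unwinding the generators of $\ll$ from Paragraph \ref{ssec:partialorders} and checking that no inequality is imposed beyond those already controlled by the left-to-right sibling and root ordering, so that the bookkeeping from labelled representatives to isomorphism classes via orbit-stabiliser cleanly produces $\mop{Sym}(\sigma)$.
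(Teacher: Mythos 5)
Your proposal takes essentially the same route as the paper. For item (3), the paper also decomposes $\Omega^{st}_{\wt\tau}$ into the almost-disjoint union of the domains $\Omega^{st}_{V(\wt\tau),\ll_\prec}$, indexed by choices $\prec$ of total orders on the children of each vertex, and then groups together those choices yielding isomorphic planar representatives via automorphisms of $\wt\tau$; you are in fact a shade more careful than the printed proof, which only mentions ordering the children $\mop{St}(v)$ and omits the ordering of the roots of the forest (needed because $\ll$ also compares roots), while you explicitly include a linear order on the roots. Your decomposition argument (assign to a generic $(t_v)$ the unique labelled planar representative obtained by ordering siblings and roots by timestamp, check the $\ll$-inequalities on generators and appeal to transitivity of $\ge$, and argue uniqueness by showing any other choice forces a coincidence of coordinates) is sound and fills in the last worry you raise. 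For items (1) and (2), you observe correctly that they are immediate from the volume definition \eqref{poset-factorial} of the poset factorial; the paper instead proves them by recursion on the number of vertices, which looks redundant given that definition but has the side benefit of establishing the recursion $\sigma!=|\sigma|\,\sigma'!\,\sigma''!$ used later in the text.
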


\begin{proof}
The volume of $\Omega_{{\wt\tau}}^{st}$ is multiplicative, i.e., it is the product of the volumes of $\Omega_c^{st}$ where $c$ runs over the connected components of ${\wt\tau}$. The inverse of the forest factorial shares the same property. Hence it is sufficient to check the result on trees. We proceed by induction on the number of vertices. The case of one vertex boils down to:
$$
	\mop{Vol}(\Omega_{\racine}^{st})=t-s=\frac{1}{\racine!}(t-s).
$$
Suppose that ${\wt\tau}=B^+(f)$ is a tree with at least two vertices. From \eqref{poset-factorial} we get:
$$
	\frac{1}{{\wt\tau}!}=\frac{1}{\vert {\wt\tau}\vert}\frac{1}{f!}.
$$
On the other hand, using the induction hypothesis, we have
\begin{eqnarray*}
	\mop{Vol}(\Omega_{{\wt\tau}}^{st})
	&=&\int_{s}^t\mop{Vol}(\Omega_{f}^{sz})\,dz\\
	&=&\frac{1}{f!}\int_{s}^t (z-s)^{\vert f\vert}\, dz\\
	&=&\frac{1}{f!}\frac{1}{\vert {\wt\tau}\vert}(t-s)^{\vert {\wt\tau}\vert}\\
	&=&\frac{1}{{\wt\tau}!}(t-s)^{\vert {\wt\tau}\vert}.
\end{eqnarray*}
Now let $\sigma$ be a planar rooted forest. The volume of $\Omega_{V(\sigma),\ll}^{st}$ is multiplicative for the shuffle product but not for the concatenation. However, any planar rooted forest $\sigma$ admits a natural unique decomposition:
\begin{equation*}
	\sigma=\sigma'\times\sigma''=\sigma'B^+(\sigma''),
\end{equation*}
where $\sigma'$ and $\sigma''$ are again (possibly empty) planar rooted forests. The poset $(V(\sigma),\ll)$ is obtained by considering the direct product of the two posets $(V(\sigma'),\ll)$ and $(V(\sigma''),\ll)$, and adding an extra element (the root of $\sigma)$ smaller than any other element. From \eqref{poset-factorial} again we get:
\begin{equation}
\label{rec-fac-one}
	\sigma !=\vert\sigma\vert\sigma'!\sigma''!
\end{equation}
The second assertion is then proved recursively with a computation analogous to one above:
\begin{eqnarray*}
	\mop{Vol}(\Omega_{V(\sigma),\ll}^{st})
	&=&\int_{z=s}^t\mop{Vol}(\Omega_{V(\sigma'),\ll}^{sz})\mop{Vol}(\Omega_{V(\sigma''),\ll}^{sz})\,dz\\
	&=&\frac{1}{\sigma'!\sigma''!}\int_{z=s}^t (s-z)^{\vert \sigma'\vert+\vert\sigma''\vert}\, dz\\
	&=&\frac{1}{\vert\sigma\vert\sigma'!\sigma''!}(t-s)^{\vert \sigma\vert}\\
	&=&\frac{1}{\sigma !}(t-s)^{\vert {\sigma}\vert}.
\end{eqnarray*}

Now let $\wt\tau$ be a non-planar rooted forest. For any $v \in V(\wt\tau)$, let $\mop{St}(v)$ be the set of vertices immediately above $v$, let $S_v$ be the set of total orders on $\mop{St}(v)$, and finally let $S_{\wt\tau}$ be the product of the sets $S_v$ for $v \in V(\wt\tau)$. Any element $\prec\ \in S_{\wt\tau}$ obviously defines a binary relation on $V(\wt\tau)$, also denoted by $\prec$: to be precise, $w' \prec w''$ if and only if there exists $v\in V(\wt\tau)$ such that $w',w''\in\mop{St}_v$ and $w'\prec w''$ inside $\mop{St}_v$. For any element $\prec\ \in S_{\wt\tau}$, let $\Cal R_\prec$ be the binary relation on $V(\wt\tau)$ defined by:
\begin{equation*}
	w'\Cal R w'' \hbox{ if and only if } w'<w'' \hbox{ or } w'\prec w'',
\end{equation*}
and let $\ll_\prec$ be the transitive closure of $\Cal R_\prec$. This is a partial order refining the forest order $<$, which, by ordering the branches at any vertex, defines a unique planar representative $\sigma_\prec$ of $\wt\tau$. 

The third assertion comes then from the following fact: the domain $\Omega_{{\wt\tau}}^{st}$ is the union of the domains $\Omega_{V(\wt\tau),\ll_\prec}^{st}$ (mutually disjoint apart from a Lebesgue-negligible intersection) where $\prec$ runs over $S_{\wt\tau}$. Now two elements $\prec$ and $\prec'$ give rise to the same planar representative $\sigma$ if and only if the unique permutation of $V(\tau)$ which induces an increasing map from $(\mop{St}(v),\prec)$ onto $(\mop{St}(v),\prec')$ is an automorphism of $\wt \tau$.
\end{proof}

\noindent As a consequence, we easily obtain an analogue of Lemma 4.4 in reference \cite{Gubi2010}:

\begin{corollary}\label{planar-forest-binomial}
For any rooted planar forest $\tau$ and for any $h,k \ge 0$ the following holds:
\begin{equation*}
	(h+k)^{\vert\tau\vert} = \sum_{V' \sqcup V'' = V(\tau),\atop V''\ll V'}\frac{\tau !}
	{(V',\ll\restr{V'})!\,\tau\restr{V''}!}h^{\vert V'\vert}k^{\vert V''\vert}.
\end{equation*}
\end{corollary}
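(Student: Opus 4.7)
The plan is to read the identity as an equality of volumes: dividing both sides by $\tau!$, Lemma \ref{factorials-volumes}(\ref{deux}) (with $t-s=h+k$) identifies the left-hand side with $\mop{Vol}(\Omega^{s,t}_{V(\tau),\ll})$, so the corollary amounts to computing this same volume by slicing along the intermediate time $u:=s+h$.

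First I would set up the decomposition. For every subset $V'\subseteq V(\tau)$ with complement $V''$, consider the subdomain of $\Omega^{s,t}_{V(\tau),\ll}$ on which $t_v\in[s,u]$ for $v\in V'$ and $t_v\in[u,t]$ for $v\in V''$. These pieces partition $\Omega^{s,t}_{V(\tau),\ll}$ up to a Lebesgue-negligible set. Next I would show that such a piece has positive measure precisely when $V''\ll V'$. Indeed, if $v\in V'$ and $w\in V''$ satisfied $v\ll w$, the constraint $t_v\ge t_w$ together with $t_v\le u\le t_w$ would force $t_v=t_w=u$, a measure-zero condition. Conversely, for $v\in V''$ and $w\in V'$ with $v\ll w$, the inequality $t_v\ge u\ge t_w$ is automatically satisfied, so no cross-constraint survives and the piece factors as a product
$$
\Omega^{s,u}_{V',\,\ll\restr{V'}}\times\Omega^{u,t}_{V'',\,\ll\restr{V''}}.
$$

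Applying Lemma \ref{factorials-volumes}(\ref{deux}) to each factor, the volume of the piece equals
$$
\frac{h^{|V'|}}{(V',\ll\restr{V'})!}\cdot\frac{k^{|V''|}}{(V'',\ll\restr{V''})!}.
$$
Summing over the admissible partitions and multiplying by $\tau!$ yields the claimed identity, provided one identifies $(V'',\ll\restr{V''})!$ with the planar-forest factorial $\tau\restr{V''}!$ that appears in the statement.

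This last identification is the point I expect to require the most care. The admissibility condition $V''\ll V'$ forces $V''$ to be a lower set for $\ll$; in particular it contains all the roots of $\tau$ in each connected component where it is nonempty, so $\tau\restr{V''}$ is a genuine planar subforest of $\tau$. One then has to check that the intrinsic $\ll$-order on the planar subforest $\tau\restr{V''}$ coincides with the restriction $\ll\restr{V''}$ from $\tau$, so that the two candidate factorials agree by definition. As a cleaner alternative, one can derive the corollary directly from the Hopf-algebraic binomial formula of Corollary \ref{hopf-binom} applied to $\mathcal H^A_{\smop{MKW}}$: the coproduct formula in Section \ref{sect:hmkw}, together with the character property $q(a\,\sshu\, b)=q(a)q(b)$ and the multiplicativity of the poset factorial over connected components, recasts the right-hand side of Corollary \ref{hopf-binom} in the exact form of the statement, once $q(\sigma)=1/\sigma!$ is known on $\mathcal H^A_{\smop{MKW}}$ (which itself is extracted from Lemma \ref{factorials-volumes} by verifying the recursion $q*q(\sigma)=2^{|\sigma|}q(\sigma)$ via the same slicing argument at $u=(s+t)/2$).
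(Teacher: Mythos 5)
Your proof is correct and takes essentially the same route as the paper's: both decompose $\Omega^{s,t}_{V(\tau),\ll}$ (with $u=s+h$) into the Lebesgue-a.e.\ disjoint union of products $\Omega^{s,u}_{(V',\ll\restr{V'})}\times\Omega^{u,t}_{(V'',\ll\restr{V''})}$ over admissible partitions $V''\ll V'$, then read off the factorials from the volume formula. The one place where you are more explicit than the paper — justifying that $V''$ is a $\ll$-lower set so that $(V'',\ll\restr{V''})!=\tau\restr{V''}!$, while no such identification is asserted (or needed) for $V'$ — is a genuine subtlety that the paper leaves implicit, and your handling of it is correct.
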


\begin{proof}
Let $s\le u\le t$ three real numbers, with $u-s=h$ and $t-u=k$. From the definition of the domain $\Omega_{\tau,\ll}^{st}$ we immediately can express it as the following union with Lebesgue-negligible pairwise intersections:
\begin{equation*}
	\Omega_{V(\tau),\ll}^{st}=\bigcup_{V(\tau)=V'\sqcup V'',\atop V''\ll V'}
	\Omega^{su}_{(V',\ll\srestr{V'})}\times\Omega^{ut}_{(V'',\ll\srestr{V''})}.
\end{equation*}
The conclusion then follows from item \ref{deux} of Lemma \ref{factorials-volumes}.
\end{proof}

\begin{remark}\label{gen-posets}\rm 
The inversion of the order in the definition of $\Omega_P^{st}$ is not really necessary, as this inversion amounts to a change of variables $t_v\mapsto s+t-t_v$, which does not change the volume. But it makes the proof slightly more direct.
\end{remark}

\begin{remark}\rm
Applying Corollary \ref{planar-forest-binomial} in the special case $h=k=1$ shows that $\tau\mapsto 1/\tau!$ extends linearly to the unique inverse-factorial character of the Hopf algebra $\mathcal H_{\smop{MKW}}^A$ taking value $1$ on the letters of $A$. As a consequence, the combinatorial Hopf algebra $\mathcal H^A_{\smop{MKW}}$ endowed with the decorated forest basis is non-degenerate. The analogue is true for non-planar forests and the $A$-decorated Butcher--Connes--Kreimer Hopf algebra, due to Lemma 4.4 in \cite{Gubi2010}. As a consequence, assertion \eqref{inv-fac-functorial} of Lemma \ref{factorials-volumes} can be derived in a purely algebraic way, using the Hopf algebra morphism $\Omega: \mathcal H_{\smop{BCK}}^A \to \mathcal H_{\smop{MKW}}^A$ of \cite{MunWri2008} given by  \eqref{omega} below, as well as the functoriality of inverse-factorial characters.
\end{remark}
\textcolor{blue}
{
\begin{remark}\rm
Another interesting example comes from the extraction-contraction Hopf algebra $\mathcal H^A_{\smop{EC}}$ of reference \cite{CEM2011}, in which the grading is given by the number of edges. In view of the recursive definition of the inverse factorial character given in Paragraph \ref{sect:inv-fact}, and due to the fact that the coproduct of a forest with $n$ edges contains exactly $2^n$ elements, the inverse factorial character is identically equal to $1$ on any forest. It is however not clear whether one can consider the rather exotic corresponding notion of rough path as a driving object for some kind of rough differential equation.
\end{remark}
}
\noindent By a straightforward iteration of \eqref{rec-fac-one} one obtains another recursive formula for the planar factorial:

\begin{proposition}
Let $\sigma=\sigma_1\cdots \sigma_k$ be a planar forest, decorated or not, with connected components $\sigma_j=B_+(\tau_j),\,j=1,\ldots k$. Then
\begin{equation}
\label{rec-fac-two}
	\sigma!=|\sigma_1|.|\sigma_1\sigma_2|\cdots |\sigma_1\cdots\sigma_k|\tau_1!\cdots\tau_k!
\end{equation}
\end{proposition}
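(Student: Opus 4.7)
The plan is to prove this by straightforward induction on the number $k$ of connected components, using the recursive identity \eqref{rec-fac-one} as the single ingredient. The key observation is that the decomposition $\sigma = \sigma' \times \sigma'' = \sigma' B_+(\sigma'')$ used to establish \eqref{rec-fac-one} selects the \emph{rightmost} tree of the forest and peels it off, which is exactly compatible with the left-to-right inductive structure of the claimed formula.

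For the base case $k=1$, we have $\sigma = \sigma_1 = B_+(\tau_1)$, so \eqref{rec-fac-one} applied with $\sigma' = \mathbf{1}$ (the empty forest, whose factorial is $1$) and $\sigma'' = \tau_1$ yields $\sigma_1! = |\sigma_1|\, \tau_1!$, matching the claim.

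For the inductive step, assume the formula holds for forests with $k-1$ components, and consider $\sigma = \sigma_1 \cdots \sigma_k$ with $\sigma_k = B_+(\tau_k)$. Writing $\sigma = \sigma' \times \sigma''$ with $\sigma' := \sigma_1 \cdots \sigma_{k-1}$ and $\sigma'' := \tau_k$, identity \eqref{rec-fac-one} gives
$$
\sigma! \;=\; |\sigma|\cdot (\sigma_1\cdots\sigma_{k-1})!\cdot \tau_k! \;=\; |\sigma_1\cdots\sigma_k|\cdot(\sigma_1\cdots\sigma_{k-1})!\cdot\tau_k!.
$$
Substituting the inductive hypothesis
$$
(\sigma_1\cdots\sigma_{k-1})! \;=\; |\sigma_1|\cdot|\sigma_1\sigma_2|\cdots|\sigma_1\cdots\sigma_{k-1}|\cdot\tau_1!\cdots\tau_{k-1}!
$$
and reordering the scalar factors yields \eqref{rec-fac-two}.

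There is no real obstacle here: the only subtlety is choosing to peel off the rightmost tree rather than the leftmost, since \eqref{rec-fac-one} is stated in terms of the decomposition $\sigma = \sigma' B_+(\sigma'')$ where $B_+(\sigma'')$ is naturally the rightmost connected component in the planar ordering. This is precisely what produces the ascending chain of prefix-lengths $|\sigma_1|, |\sigma_1\sigma_2|, \ldots, |\sigma_1\cdots\sigma_k|$ in the stated formula.
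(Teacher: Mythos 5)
Your proof is correct and follows precisely the route the paper intends: the paper simply asserts the result follows "by a straightforward iteration of \eqref{rec-fac-one}", and your induction on $k$ peeling off the rightmost component is exactly that iteration spelled out. No differences to note.
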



\subsection{Planarly branched rough paths}
\label{sect:pbrp-def}

The notion of planarly branched rough paths is given in the next definition. It is motivated from a Hopf algebraic point of view. Its significance  for controlled rough differential equations will become clear further below.

\begin{definition}\label{def:planarBRP}
Let $\gamma\in]0,1]$ and let $A$ be a finite alphabet. A $\gamma$-regular planarly branched rough path is a $\gamma$-regular $\mathcal H_{\smop{MKW}}^A$-rough path.
\end{definition}


\section{Simple and contracting arborification in the planar setting}
\label{sec:quotient}


\subsection{A projection onto the shuffle Hopf algebra: planar arborification}
\label{sect:planar-arb}

Let $A$ be an alphabet, and let $\Cal H_{\sshu}^A$ (resp.~$\Cal H_{\smop{MKW}}^A$) be the shuffle Hopf algebra with letters in $A$ (resp.~the Hopf algebra of $A$-decorated planar forests). The \textsl{planar arborification map} $\frak a_{\ll}:\Cal H_{\smop{MKW}}^A\to\Cal H_{\sshu}^A$ sends any planar decorated forest to the sum of its linear extensions. It is defined for any degree $n$ planar $A$-decorated forest $\tau=(\overline\tau,\varphi)$ as follows:
\begin{equation*}
	\frak a_{\ll}(\overline\tau,\varphi):=\sum_{\alpha:(\Cal V(\overline\tau),\ll)\nearrow \{1,\ldots,n\}}
	\varphi\circ\alpha^{-1}(1)\cdots \varphi\circ\alpha^{-1}(n),
\end{equation*}
where the sum runs over the increasing bijections from the poset $\big(\Cal V(\overline\tau),\ll)$ onto $\{1,\ldots,n\}$. 
As an example, we have:
\begin{equation*}
	\frak a_{\ll}(\arbreadec ab\ \racine^c)=bac,\hskip 12mm \frak a_{\ll}(\racine^c\arbreadec ab\;)=bca+cba.
\end{equation*}

This definition is directly inspired from the simple arborification map $\frak a:\Cal H_{\smop{BCK}}^A\to\Cal H_{\sshu}^A$ which sends any (non-planar) decorated forest to the sum of its linear extensions \cite{E92, FM17, F02}. It is defined for any degree $n$ non-planar $A$-decorated forest by:
\begin{equation*}
	\frak a(f,\varphi):=\sum_{\alpha:(\Cal V(f),<)\nearrow \{1,\ldots,n\} }
	\varphi\circ\alpha^{-1}(1)\cdots \varphi\circ\alpha^{-1}(n),
\end{equation*}
where the sum runs over the increasing bijections from the poset $\big(\Cal V(f),<)$ onto $\{1,\ldots,n\}$. 

\begin{lemma}\label{planar-decomp} (Canonical decomposition of planar forests)
\begin{enumerate}
\item Any non-empty $A$-decorated planar forest $\tau$ admits a unique decomposition:
\begin{equation*}
	\tau=\tau'\times_a \tau'',
\end{equation*}
where $\tau'$ and $\tau''$ are $A$-decorated planar forests, $a \in A$ and $\tau'\times_a \tau''$ stands for $\tau'B_a^+(\tau'')$.

\item The planar arborification map can be recursively defined by $\frak a_{\ll}(\bm 1)=\bm 1$ and 
\begin{equation*}
	\frak a_{\ll}(\tau'\times_a \tau'')=[\frak a_{\ll}(\tau')\shu\frak a_{\ll}(\tau'')]a.
\end{equation*}
\end{enumerate}
\end{lemma}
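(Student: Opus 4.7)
For the first claim, any non-empty planar forest $\tau$ canonically splits as a sequence $\tau_1\cdots\tau_k$ of planar rooted trees; the plan is to write the rightmost component as $\tau_k = B_a^+(\tau'')$ for uniquely determined $a \in A$ and planar forest $\tau''$, and set $\tau' := \tau_1\cdots\tau_{k-1}$, so that $\tau = \tau' \times_a \tau''$. Uniqueness is inherited from the uniqueness of the rightmost-tree decomposition and of the inverse of $B_a^+$.

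For the second claim, let $r_a$ denote the root of the rightmost tree in $\tau = \tau' \times_a \tau''$, so that $V(\tau) = V(\tau') \sqcup \{r_a\} \sqcup V(\tau'')$. The plan is to establish two structural facts about the poset $(V(\tau),\ll)$ and then read off the recursion. First, $r_a$ will be shown to be the unique minimum of $(V(\tau),\ll)$: the root-clause of the generating relation $R$ yields $r_a\,R\,v$ for every other root $v$ of $\tau$ (all of which lie in $V(\tau')$, since $r_a$ is the rightmost root), and combined with the tree-clause this propagates to $r_a \ll v$ for every $v \in V(\tau')$ by transitivity through the roots, and $r_a \ll v$ directly for every $v \in V(\tau'')$. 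Conversely $r_a$ receives no incoming $R$-edge, having no ancestor, no siblings, and no root to its right. Consequently $\varphi(r_a)=a$ will occupy the terminal slot of every word appearing in the sum defining $\mathfrak{a}_\ll(\tau)$.

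Second, I plan to show that the restriction of $\ll$ to $V(\tau')\sqcup V(\tau'')$ coincides with the disjoint union of the intrinsic partial orders $(V(\tau'),\ll_{\tau'})$ and $(V(\tau''),\ll_{\tau''})$. The restriction to $V(\tau')$ matches $\ll_{\tau'}$ because the tree, sibling, and root clauses behave identically whether $\tau'$ is viewed standalone or as a subforest of $\tau$; the restriction to $V(\tau'')$ matches $\ll_{\tau''}$ because the root-clause applied to roots of $\tau''$ in isolation is replaced by the sibling-clause in $\tau$ (those roots becoming children of $r_a$), both mechanisms imposing the same rightmost-first ordering. The critical point is mutual incomparability: every outgoing $R$-edge from $V(\tau')$ stays in $V(\tau')$---tree and sibling edges are local to a single connected component, and root-edges from a root of $\tau'$ only target other roots of $\tau'$ strictly to its left, never $r_a$---and symmetrically for $V(\tau'')$. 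Linear extensions of $(V(\tau),\ll)$ will then be in bijection with triples consisting of a linear extension of $(V(\tau'),\ll_{\tau'})$, a linear extension of $(V(\tau''),\ll_{\tau''})$, and a shuffle of the two total orders thus obtained, with $r_a$ appended last. Summing over all such configurations gives
$$\mathfrak{a}_\ll(\tau'\times_a\tau'') = \bigl[\mathfrak{a}_\ll(\tau')\shu\mathfrak{a}_\ll(\tau'')\bigr]\,a,$$
the base case $\mathfrak{a}_\ll(\bm 1) = \bm 1$ being immediate. The main technical delicacy is the mutual incomparability above: one must exclude transitive chains of $R$-edges creeping from $V(\tau')$ through $r_a$ into $V(\tau'')$, which is precisely blocked by $r_a$ being a source in the $R$-digraph and hence unable to sit in the interior of any chain.
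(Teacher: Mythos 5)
Your proposal is correct and follows essentially the same route as the paper. The paper reduces assertion (2) to precisely the three structural facts you establish about the poset $\big(V(\tau'\times_a\tau''),\ll\big)$ --- that the restriction of $\ll$ to $V(\tau')$ (resp.\ $V(\tau'')$) recovers the intrinsic order of $\tau'$ (resp.\ $\tau''$), that the new root $a$ is the unique minimum, and that $V(\tau')$ and $V(\tau'')$ are mutually incomparable --- and declares their verification straightforward; your write-up simply fills in that verification, including the key observation that $r_a$ is a source in the generating relation $R$ so no transitive chain can connect $V(\tau')$ to $V(\tau'')$.
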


\begin{proof}
The first assertion is straightforward, the second is a direct consequence of the poset structure of $V(\tau'\times_a \tau'')=V(\tau')\sqcup  V(\tau'')\sqcup \{a\}$ under the partial order $\ll$, which is entirely determined by the fact that 
\begin{enumerate}
\item the restriction of $\ll$ to $V(\tau')$ is the partial order $\ll$ determined by the planar forest $\tau'$, and similarly for $\tau''$,

\item $a \in A$ is the unique minimum,

\item vertices of $\tau'$ are incomparable with vertices of $\tau''$.
\end{enumerate}
\end{proof}

\begin{theorem}\label{planar-arb}
The planar arborification map $\frak a_{\ll}$ is a surjective Hopf algebra morphism, combinatorial if the alphabet $A$ is finite, and the diagram below commutes.
\diagramme{
\xymatrix{
\Cal H_{\smop{BCK}}^A \ar[rr]^\Omega \ar@{>>}[ddrr]_{\frak a} && \Cal H_{\smop{MKW}}^A\ar@{>>}[dd]^{\frak a_{\ll}}\\
&&\\
&&\Cal H_{\sshu}^A
}
}
\noindent where $\Omega$ is the symmetrization map \cite[Definition 8]{MunWri2008}.
\end{theorem}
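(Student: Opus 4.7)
I plan to prove the theorem in four parts: surjectivity, combinatoriality, the bialgebra morphism property, and commutativity of the diagram. Surjectivity is immediate: for any word $w=a_1\cdots a_n\in A^*$, the planar ``comb'' obtained by iterated left grafting $B_{a_n}^+\circ\cdots\circ B_{a_1}^+(\bm 1)$ (root $a_n$ at the bottom, leaf $a_1$ at the top) carries a totally ordered poset $(V,\ll)$ whose unique linear extension produces the word $w$. Combinatoriality is read off the defining formula, because the coefficient of a word in $\mathfrak{a}_\ll(\tau)$ equals the number of increasing bijections $(V(\tau),\ll)\nearrow\{1,\ldots,n\}$ yielding that word, a non-negative integer. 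For the algebra morphism property I would induct on $|\tau_1|+|\tau_2|$ via the canonical decomposition of Lemma \ref{planar-decomp}: writing $\tau_i=\tau'_i\times_{a_i}\tau''_i$, applying the tree-level shuffle recursion $\tau_1\shu \tau_2=(\tau'_1\shu\tau_2)\times_{a_1}\tau''_1+(\tau_1\shu\tau'_2)\times_{a_2}\tau''_2$ and then the recursive formula of Lemma \ref{planar-decomp}(2), one checks that $\mathfrak{a}_\ll(\tau_1\shu\tau_2)$ and $\mathfrak{a}_\ll(\tau_1)\shu \mathfrak{a}_\ll(\tau_2)$ satisfy the same recursion in terms of $\mathfrak{a}_\ll$-values on smaller forests.

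For the coalgebra morphism property I want to show $\Delta_{\sshu}\circ\mathfrak{a}_\ll=(\mathfrak{a}_\ll\otimes\mathfrak{a}_\ll)\circ\Delta$ where $\Delta$ on the right is the MKW coproduct. The plan is to set up a common combinatorial bijection. Each term on the left-hand side arises from a linear extension $\alpha$ of $(V(\tau),\ll)$ and a cutting position $k\in\{0,\ldots,n\}$, contributing the pair (first $k$ letters) $\otimes$ (last $n-k$ letters). With the convention that letters of $\mathfrak{a}_\ll(\tau)$ appear from $\ll$-largest to $\ll$-smallest, the first $k$ letters correspond to the $\ll$-largest $k$ vertices, a subset $V'$ which is automatically a $\ll$-up-set; its complement $V''$ is a $\ll$-down-set. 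This is precisely the admissibility condition $V''\ll V'$ governing the MKW coproduct. Conversely, given an admissible cut $(V',V'')$, concatenating any linear extension of $(V'',\ll\restr{V''})$ with any linear extension of $(V',\ll\restr{V'})$ yields a linear extension of $(V(\tau),\ll)$ with split at position $|V''|$. A linear extension of the possibly disconnected poset $(V',\ll\restr{V'})$ is an interleaving of linear extensions of its connected components, so after decoration the sum of corresponding words is the shuffle of the arborifications of those components --- which, by the algebra morphism property already established, equals $\mathfrak{a}_\ll((\tau\restr{V'})^{\sshu})$. The analogous (simpler) bookkeeping for $V''$ gives $\mathfrak{a}_\ll(\tau\restr{V''})$, completing the identification.

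The commutativity of the diagram is the arborification-level incarnation of assertion (3) of Lemma \ref{factorials-volumes}: by definition $\Omega(\wt\tau)=\sum_\sigma\mop{Sym}(\sigma)\sigma$ over planar representatives $\sigma$ of $\wt\tau$, and that volume identity translates combinatorially into a weighted bijection between linear extensions of $(V(\wt\tau),<)$ and pairs $(\sigma,\alpha)$ of a planar representative and a linear extension of $(V(\sigma),\ll)$; reading off decorations on both sides yields $\mathfrak{a}_\ll(\Omega(\wt\tau))=\sum_\sigma\mop{Sym}(\sigma)\mathfrak{a}_\ll(\sigma)=\mathfrak{a}(\wt\tau)$. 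I expect the main obstacle to lie in the coalgebra morphism step, and more specifically in the identification of $\mathfrak{a}_\ll((\tau\restr{V'})^{\sshu})$ with the arborification of the weaker poset $(V',\ll\restr{V'})$: this is precisely the subtlety pointed out by the authors in their remark distinguishing the restricted order from the intrinsic order on the sub-forest $\tau\restr{V'}$, and it is the combinatorial reason the ``shuffle of connected components of the poset'' appears in the definition of the MKW coproduct.
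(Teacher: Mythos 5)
Your proposal takes a genuinely different route on the coalgebra step. For the \emph{algebra} morphism property your argument is essentially the paper's: induction on $|\tau_1|+|\tau_2|$ via the canonical decomposition $\tau=\tau'\times_a\tau''$ and the right-end shuffle recursion. For surjectivity and for the commutativity of the diagram your arguments are fine and broadly consistent with the paper (which leaves surjectivity implicit --- it also follows from $\mathfrak a=\mathfrak a_\ll\circ\Omega$ together with surjectivity of $\mathfrak a$ --- and dismisses the triangle with ``easily follows'' from formula \eqref{omega}). The real divergence is the \emph{coalgebra} morphism property. The paper never unpacks the explicit left-admissible-cut formula for $\Delta$: it introduces $L_a(w)=wa$, observes $L_a\circ\mathfrak a_\ll=\mathfrak a_\ll\circ B_a^+$, and inducts, feeding everything through the recursive characterizations $\Delta(wa)=(\mop{Id}\otimes L_a)\Delta(w)+wa\otimes\bm 1$ and $\Delta(\tau'\times_a\tau'')=(\mop{Id}\otimes B_a^+)\Delta(\tau'\shu\tau'')+\tau\otimes\bm 1$. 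You instead build a combinatorial bijection between deconcatenation-splits of linear extensions of $(V(\tau),\ll)$ and MKW-admissible cuts $(V',V'')$. This is a legitimate and more transparent strategy, but it is not yet closed: after reducing to interleavings of linear extensions of the connected components $C$ of $(V',\ll\restr{V'})$, you still need that for each such $C$ the restricted order $\ll\restr{C}$ coincides with the intrinsic $\ll$-order of the planar subforest $\tau\restr{C}$ --- otherwise ``the arborification of the component'' is not literally $\mathfrak a_\ll(\tau\restr{C})$ and the appeal to the already-proved algebra morphism property does not apply. You flag this subtlety (citing the paper's own remark that $\ll\restr{V'}$ is strictly weaker than the intrinsic order of $\tau\restr{V'}$), but that remark addresses $V'$ globally, not its components; verifying equality component-by-component is an additional step you would have to carry out. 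The paper's inductive proof sidesteps the whole issue, which is its main advantage here.
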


\begin{proof}
It is well-known that $\frak a$ and $\Omega$ are Hopf algebra morphisms \cite{F02,MunWri2008}. The map $\Omega$ is given by:
\begin{equation}
\label{omega}
	{\Omega(f)={\mop{Sym}}(f)\sum_{\tau\to\hskip -6.5pt\to f}\tau},
\end{equation}
from which the commutation of the diagram easily follows. It only remains to prove by direct checking that $\frak a_{\ll}$ respects the Hopf algebra structures. For any $A$-decorated planar forests $\tau,\omega$ which admit canonical decompositions $\tau=\tau'\times_a \tau''$ and $\omega=\omega'\times_b \omega''$ according to Lemma \ref{planar-decomp}, we compute, using induction on the sum of degrees $\vert \tau\vert +\vert \omega\vert$:
\allowdisplaybreaks
\begin{eqnarray*}
	\frak a_{\ll}(\tau\shu \omega)
		&=&\frak a_{\ll}\big((\tau'\times_a \tau'')\shu (\omega'\times_b \omega'')\big)\\
		&=&\frak a_{\ll}\big(\tau'B^+_a(\tau'')\shu \omega' B^+_b(\omega'')\big)\\
		&=&\frak a_{\ll}\Big([\tau'\shu \omega'B^+_b(\omega'')]B^+_a(\tau'')
			+[\tau'B^+_a(\tau'')\shu \omega']B^+_b(\omega'')\Big)\\
	&=&\frak a_{\ll}\Big([\tau'\shu(\omega'\times_b \omega'')]\times_a \tau''
			+[(\tau'\times_a\tau'')\shu \omega']\times_b \omega''\Big)\\
	&=&\Big[\frak a_{\ll}\big(\tau'\shu(\omega'\times_b \omega'')\big)\shu \frak a_{\ll}(\tau'')\Big]a
	+\Big[\frak a_{\ll}\big((\tau'\times_a \tau'')\shu \omega'\big)\shu\frak a_{\ll}(\omega'')\Big]b\\
	&=&\Big[\frak a_{\ll}(\tau')\shu\frak a_{\ll}(\omega'\times_b \omega'')\shu\frak a_{\ll}(\tau'')\Big]a
	+\Big[\frak a_{\ll}(\tau'\times_a \tau'')\shu\frak a_{\ll}(\omega')\shu\frak a_{\ll}(\omega'')\Big]b\\
	&=&\Big[\frak a_{\ll}(\tau')\shu\big[\frak a_{\ll}(\omega')\shu \frak a_{\ll}(\omega'')\big]b\shu\frak a_{\ll}(\tau'')\Big]a
	+\Big[\big[\frak a_{\ll}(\tau')\shu\frak a_{\ll} (\tau'')\big]a\shu\frak a_{\ll}(\omega')\shu\frak a_{\ll}(\omega'')\Big]b\\
	&=&\Big[\frak a_{\ll}(\tau')\shu\frak a_{\ll}(\tau'')\Big]a\,\shu\,\Big[\frak a_{\ll}(\omega')\shu\frak a_{\ll}(\omega'')\Big]b\\
	&=&\frak a_{\ll}(\tau)\shu\frak a_{\ll}(\omega).
\end{eqnarray*}
To check compatibility with coproducts, we introduce the linear operator of left concatentation, $L_a:\Cal H_{\sshu}^A\to\Cal H_{\sshu}^A$, defined by $L_a(w)=wa$ for any word $w\in A^*$. It clearly verifies:
\begin{equation*}
	L_a\circ\frak a_{\ll}=\frak a_{\ll}\circ B^+_a.
\end{equation*}
\noindent For any $A$-decorated planar forest $\tau=\tau'\times_a \tau''$ we compute, using induction on the degree $\vert \tau\vert$:
\begin{eqnarray*}
	\Delta\frak a_{\ll}(\tau)
	&=&\Delta\Big(\big[\frak a_{\ll}(\tau')\shu\frak a_{\ll}(\tau'')\big]a\Big)\\
	&=&(\mop{Id}\otimes L_a)\Delta\Big(\frak a_{\ll}(\tau')\shu\frak a_{\ll}(\tau'')\Big)
	+\frak a_{\ll}(\tau)\otimes\bm 1\\
	&=&(\mop{Id}\otimes L_a)\Big(\Delta\frak a_{\ll}(\tau')\shu\Delta\frak a_{\ll}(\tau'')\Big)
	+\frak a_{\ll}(\tau)\otimes\bm 1\\
	&=&(\mop{Id}\otimes L_a)\Big((\frak a_{\ll}\otimes\frak a_{\ll})\Delta \tau'\shu(\frak a_{\ll}\otimes\frak a_{\ll})
	\Delta \tau''\Big)+\frak a_{\ll}(\tau)\otimes\bm 1\\
	&=&(\frak a_{\ll}\otimes\frak a_{\ll})\Big((\mop{Id}\otimes B^+_a)(\Delta \tau'\shu\Delta \tau'')
	+\tau\otimes\bm 1\Big)\\
	&=&(\frak a_{\ll}\otimes\frak a_{\ll})\Big((\mop{Id}\otimes B^+_a)\big(\Delta (\tau'\shu \tau'')\big)
	+\tau\otimes\bm 1\Big)\\
	&=&(\frak a_{\ll}\otimes\frak a_{\ll})(\Delta \tau).
\end{eqnarray*}
Compatibility with units and co-units is immediate, and compatibility with antipodes comes for free due to connectedness of both Hopf algebras.
\end{proof}

\begin{proposition}\label{rp-arbo}
Let $\gamma\in]0,1]$, let $A$ be a finite alphabet with $d$ letters, and let $\mathbb X_{st}$ be a $\gamma$-regular rough path in the classical sense on $\mathbb R^d$. Then its arborified version $\wt{\mathbb X}_{st}:=\mathbb X_{st}\circ\mathfrak a^\ll$ is a $\gamma$-regular planarly branched rough path on $\mathbb R^d$.
\end{proposition}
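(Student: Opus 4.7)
The plan is to recognize this proposition as an essentially immediate corollary of the results already established. A $\gamma$-regular rough path on $\mathbb R^d$ in the classical sense of T. Lyons is, by definition, precisely a $\gamma$-regular $\mathcal H_{\sshu}^A$-rough path in the sense of Definition \ref{rough}, where $A=\{a_1,\ldots,a_d\}$ and $\mathcal H_{\sshu}^A$ is endowed with the basis $A^*$ of words (which makes it a nondegenerate combinatorial Hopf algebra, as noted in Paragraph \ref{sect:cha}).

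Next, I would invoke Theorem \ref{planar-arb}, which establishes that the planar arborification $\mathfrak a_\ll : \mathcal H_{\smop{MKW}}^A \to \mathcal H_{\sshu}^A$ is a surjective combinatorial morphism between combinatorial Hopf algebras: it preserves the grading by the number of vertices (equivalently letters), respects products and coproducts, and sends each basis element (decorated planar forest) to a sum of basis elements (words) with non-negative integer coefficients, as is clear from its defining formula in terms of linear extensions of the poset $(V(\overline\tau),\ll)$.

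With these two observations in hand, the conclusion follows by direct application of part (2) of Proposition \ref{funct-rough}: taking $(\mathcal H, \mathcal B) = (\mathcal H_{\smop{MKW}}^A, \mathcal F_{\smop{pl}}^A)$, $(\mathcal H', \mathcal B') = (\mathcal H_{\sshu}^A, A^*)$, $\Phi = \mathfrak a_\ll$ and $\mathbb X'_{st} = \mathbb X_{st}$ yields that $\wt{\mathbb X}_{st} = \mathbb X_{st} \circ \mathfrak a_\ll$ is a $\gamma$-regular $\mathcal H_{\smop{MKW}}^A$-rough path. By Definition \ref{def:planarBRP} this is exactly what is meant by a $\gamma$-regular planarly branched rough path on $\mathbb R^d$.

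There is no real obstacle here, as all of the work is carried out in the preceding results: multiplicativity and Chen's lemma for $\wt{\mathbb X}$ follow from the fact that $\mathfrak a_\ll$ is a Hopf algebra morphism, while the H\"older estimates on each planar forest $\sigma$ follow by expanding $\mathfrak a_\ll(\sigma)$ in the basis of words with non-negative integer coefficients and summing the corresponding estimates on $\mathbb X_{st}$, exactly as in the proof of Proposition \ref{funct-rough}.
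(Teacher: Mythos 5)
Your proof is correct and follows exactly the same route as the paper, which indeed disposes of the proposition in one line as an immediate consequence of Proposition \ref{funct-rough}, having already established in Theorem \ref{planar-arb} that $\mathfrak a_\ll$ is a surjective combinatorial Hopf algebra morphism. Your version merely spells out the identification of a classical rough path as a $\gamma$-regular $\mathcal H_{\sshu}^A$-rough path and the specific substitution into Proposition \ref{funct-rough}(2), which is a helpful unpacking but not a different argument.
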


\begin{proof}
This is an immediate consequence of Proposition \ref{funct-rough}.
\end{proof}

\noindent Proposition \ref{rp-arbo} calls for the following definition:

\begin{definition}
A $\gamma$-regular planarly branched rough path $\mathbb Z_{st}$ on $\mathbb R^d$ is geometric if there exists a $\gamma$-regular rough path $\mathbb X_{st}$ in the classical sense such that $\mathbb Z_{st}$ is its arborified version, i.e.
$$
	\mathbb Z_{st}=\mathbb X_{st}\circ\mathfrak a^\ll.
$$
\end{definition}
\textcolor{blue}
{
\begin{remark}\rm
Any geometric branched rough path is then geometric by definition. The converse is true at the price of inflating the alphabet, as in the branched case \cite[Paragraph 4.2]{HaiKel2015}. Indeed, the Hopf algebra $\Cal H_{\smop{MKW}}^A$ is, as an algebra, the shuffle algebra of the set of $A$-decorated planar rooted trees. Any planarly branched rough path is then geometric provided this bigger alphabet is considered.
\end{remark}
}


\subsection{Planar contracting arborification}
\label{sect:planar-arb-c}

We present a contracting version of planar arborification which has some interest in its own right, although it will not be directly used in the present paper. Suppose that the alphabet $A$ carries an Abelian semigroup structure $(a,b)\mapsto [a+b]$. The quasi-shuffle Hopf algebra is isomorphic to $\Cal H_{\sshu}^A$ as coalgebra. The quasi-shuffle product is recursively defined by $\emptyset\qshu w=w\qshu\emptyset=w$ for any word $w\in A^*$ and:
\begin{equation*}
	av\qshu bw=a(v\qshu bw)+b(av\qshu w)+[a+b](v\qshu w)
\end{equation*}
for any letters $a,b\in A$ and words $(v,w)\in A^*$. For example, $a\qshu b=ab+ba+[a+b]$, and $ab\qshu c=abc+acb+cab+[a+c]b+a[b+c]$. It is well-known \cite{H00} that the quasi-shuffle product together with deconcatenation give rise to a Hopf algebra $\Cal H_{\sqshu}^A$ isomorphic to the shuffle Hopf algebra $\Cal H_{\sshu}^A$.

\medskip

The \textsl{planar contracting arborification map} $\frak a^c_{\ll}:\Cal H_{\smop{MKW}}^A\to\Cal H_{\sqshu}^A$ sends any planar decorated forest to the sum of its linear extensions \textsl{including contraction terms}. It is defined for any degree $n$ planar $A$-decorated forest as follows:
\begin{equation*}
	\frak a_{\ll}(\tau,\varphi):=\sum_{r\ge 0}\ \sum_{\alpha:(V(\tau),\ll)\scto 
	\{1,\ldots,n-r\}}\varphi\circ\alpha^{-1}(1)\cdots \varphi\circ\alpha^{-1}(n-r)
\end{equation*}
where the inner sum runs over the increasing surjections from the poset $\big(V(\tau),\ll)$ onto $\{1,\ldots,n-r\}$, i.e., surjective maps $\alpha$ such that $u\ll u'\in V(\tau)$ and $u\neq u'$ implies $\alpha(u)<\alpha(u')$. It can happen that $\alpha^{-1}(j)$ contains several terms: in that case, $\varphi\circ \alpha^{-1}(j)$ is to be understood as the sum in $A$ of the terms $\varphi(u),\,u\in\alpha^{-1}(j)$. As an example, we have:
\begin{equation*}
	\frak a^c_{\ll}(\arbreadec ab\ \racine^c)
	=bac,\hskip 12mm \frak a^c_{\ll}(\racine^c\arbreadec ab\;)=bca+cba+[b+c]a.
\end{equation*}

This definition is directly inspired from the contracting arborification map $\frak a^c:\Cal H_{\smop{BCK}}^A\to\Cal H_{\sqshu}^A$ which sends any (non-planar) decorated forest to the sum of its linear extensions including contraction terms \cite{EFM17,E92, FM17, F02}. It is defined for any degree $n$ non-planar $A$-decorated forest by:
\begin{equation*}
	\frak a^c(f,\varphi):=\sum_{r\ge 0}\ \sum_{\alpha:(V(f),<)\scto \{1,\ldots,n-r\} }
	\varphi\circ\alpha^{-1}(1)\cdots \varphi\circ\alpha^{-1}(n)
\end{equation*}
where the inner sum runs over the increasing surjections from the poset $\big(V(f),<)$ onto $\{1,\ldots,n-r\}$, and is a surjective Hopf algebra morphism from $\Cal H_{\smop{BCK}}^A$ onto $\Cal H^A_{\sqshu}$. 

An analogue of Theorem \ref{planar-arb} holds:
\begin{theorem}\label{planar-arb-c} $\ $
\begin{enumerate}

\item The planar contracting arborification map can be recursively defined by $\frak a^c_{\ll}(\bm 1)=\bm 1$ and
{\rm
\begin{equation*}
	\frak a^c_{\ll}(\tau'\times_a \tau'')=[\frak a^c_{\ll}(\tau')\qshu\frak a^c_{\ll}(\tau'')]a.
\end{equation*}
}
\item The planar contracting arborification map $\frak a^c_{\ll}$ is a surjective Hopf algebra morphism, combinatorial if the alphabet $A$ is finite, and the diagram below commutes.
{\rm
\diagramme{
\xymatrix{
\Cal H_{\smop{BCK}}^A \ar[rr]^\Omega \ar@{>>}[ddrr]_{\frak a^c} && \Cal H_{\smop{MKW}}^A\ar@{>>}[dd]^{\frak a_{\ll}^c}\\
&&\\
&&\Cal H_{\sqshu}^A
}
}
}
\end{enumerate}
\end{theorem}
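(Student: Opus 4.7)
The proof proceeds in complete analogy with that of Theorem \ref{planar-arb}, replacing the shuffle product $\shu$ by the quasi-shuffle product $\qshu$ throughout. For part (1), I analyze the poset $(V(\tau'\times_a\tau''),\ll)$ using Lemma \ref{planar-decomp}: $a$ is the unique minimum and $V(\tau')$, $V(\tau'')$ are mutually incomparable subposets above it. Consequently, any increasing surjection $\alpha:(V(\tau),\ll)\scto\{1,\ldots,m\}$ must isolate $a$ in a singleton block (since $a$ is strictly below every other vertex), and its restrictions to $V(\tau')$ and $V(\tau'')$ are increasing surjections onto their respective images inside $\{1,\ldots,m\}\setminus\{\alpha(a)\}$. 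The combinatorial problem of assembling two such surjections on incomparable posets into a single one on their union --- allowing merges at each level --- is precisely solved by the quasi-shuffle of the associated output words, the contraction rule $(a_1,a_2)\mapsto[a_1+a_2]$ encoding exactly the merges. Appending $a$ on the appropriate side yields the recursive formula.

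For part (2), I would mimic the inductive argument of Theorem \ref{planar-arb} essentially verbatim. Product compatibility $\frak a_{\ll}^c(\tau\shu\omega)=\frak a_{\ll}^c(\tau)\qshu\frak a_{\ll}^c(\omega)$ is obtained by induction on $|\tau|+|\omega|$ from the canonical decompositions $\tau=\tau'\times_a\tau''$, $\omega=\omega'\times_b\omega''$, using the right-tail quasi-shuffle recursion
\[
	(va)\qshu(wb)=(v\qshu wb)\,a+(va\qshu w)\,b+(v\qshu w)\,[a+b],
\]
which is the reverse-word image of the defining left-head recursion and which replaces the shuffle identity used in the non-contracting proof. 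The crucial new term $(v\qshu w)[a+b]$ accounts for those shuffles in $\tau\shu\omega$ in which the roots decorated by $a$ and $b$ become $\ll$-incomparable in the shuffled planar forest and may thus sit in a common block of an increasing surjection, producing precisely a merged letter. For coproduct compatibility, the operator $L_a:w\mapsto wa$ still satisfies $L_a\circ\frak a_{\ll}^c=\frak a_{\ll}^c\circ B_a^+$, and since the coproduct of $\Cal H^A_{\sqshu}$ is again deconcatenation, the induction on $|\tau|$ carrying over from Theorem \ref{planar-arb} is formally identical.

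The commutativity of the diagram follows by combining formula \eqref{omega} for $\Omega$ with the non-planar definition of $\frak a^c$: summing over planar representatives $\sigma\surj{4}\wt\tau$ with weights $\mop{Sym}(\sigma)$ and then applying $\frak a_{\ll}^c$ reproduces the sum over increasing surjections of $(V(\wt\tau),<)$ with contractions, since each such surjection refines uniquely through the planar order $\ll$ of exactly one representative. Surjectivity and combinatoriality (non-negative integer structure constants) are immediate consequences of the recursive construction in (1). I expect the main obstacle to be the bookkeeping for the contraction term in product compatibility: one must verify carefully that every merge generated by $\qshu$ on the two arborified words corresponds to a unique configuration in $\tau\shu\omega$ where two vertices from different factors are $\ll$-incomparable and can be placed in a common block of an increasing surjection. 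This is the planar analogue of the known fact that $\frak a^c$ is a Hopf morphism in the non-planar setting, and no fundamentally new difficulty arises once the recursive formula of (1) is in hand.
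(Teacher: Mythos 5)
The gap is in the product-compatibility argument of part (2), and it is fatal as you have set it up. You assert that the contraction term $(v\qshu w)[a+b]$ in the tail quasi-shuffle recursion ``accounts for those shuffles in $\tau\shu\omega$ in which the roots decorated by $a$ and $b$ become $\ll$-incomparable.'' But in every planar rooted forest the roots are \emph{totally} ordered by $\ll$: the definition of $\ll$ contains an explicit clause comparing any two roots by their left--right position. Hence the two roots decorated $a$ and $b$ are always $\ll$-comparable in every forest appearing in $\tau\shu\omega$, and an increasing surjection can never merge them. Carrying out the induction explicitly with $u:=\frak a^c_{\ll}(\tau')\qshu\frak a^c_{\ll}(\tau'')$ and $v:=\frak a^c_{\ll}(\omega')\qshu\frak a^c_{\ll}(\omega'')$ yields
\[
\frak a^c_{\ll}(\tau\shu\omega)=[u\qshu vb]\,a+[ua\qshu v]\,b,
\]
which differs from $(ua)\qshu(vb)$ by exactly the missing contraction term $(u\qshu v)[a+b]$. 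The smallest witness is $\tau=\racine^a$, $\omega=\racine^b$: one finds $\frak a^c_{\ll}(\racine^a\shu\racine^b)=\frak a^c_{\ll}(\racine^a\racine^b)+\frak a^c_{\ll}(\racine^b\racine^a)=ab+ba$, whereas $\frak a^c_{\ll}(\racine^a)\qshu\frak a^c_{\ll}(\racine^b)=a\qshu b=ab+ba+[a+b]$. The same example breaks the diagram: $\Omega(\racine^a\racine^b)=\racine^a\racine^b+\racine^b\racine^a$ maps under $\frak a^c_{\ll}$ to $ab+ba$, while $\frak a^c(\racine^a\racine^b)=a\qshu b$ (since in $\mathcal H^A_{\smop{BCK}}$ the two roots are $<$-incomparable and so \emph{do} merge).

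To be clear: your part (1) and your treatment of the coproduct are fine, and the recursive formula $\frak a^c_{\ll}(\tau'\times_a\tau'')=[\frak a^c_{\ll}(\tau')\qshu\frak a^c_{\ll}(\tau'')]a$ does agree with the definition by increasing surjections. The precise point where the ``verbatim analogy'' with Theorem \ref{planar-arb} fails is the step you hoped the contraction term would patch: in the non-planar setting roots of distinct trees are $<$-incomparable and the merge is available, but planarity imposes a total order on the roots via $\ll$ and removes exactly the configurations that would produce the letter $[a+b]$. Your proof therefore does not establish the algebra-morphism property, and the explicit example above indicates that the statement needs to be revisited (or the partial order governing the contractions, or the target product, adjusted) before the ``details left to the reader'' can be filled in.
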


\begin{proof}
Entirely similar to proof of the analogous results on planar arborification on Paragraph \ref{sect:planar-arb}. Details are left to the reader.
\end{proof}


\section{Rough differential equations on homogeneous spaces}
\label{sec:RDEHom}

In this section, we prove the convergence of the formal solutions of the rough differential equation \eqref{eq:control1} under particular analyticity assumptions.


\subsection{Formal solutions of a rough differential equation on a homogeneous space}

Let $t\mapsto X_t:=(X_t^1,\ldots,X_t^d)$ be a differentiable path with values in $\mathbb R^d$. Let $A=\{a_1,\ldots,a_d\}$ be an alphabet with $d$ letters. The controlled differential equation we are looking at writes:
\begin{equation}
\label{rde-rappel}
	dY_{st}=\sum_{i=1}^d \# f_i(Y_{st})\,dX_t^i
\end{equation}
with initial condition $Y_{ss}=y$. The unknown is a path $Y_s: t \mapsto Y_{st}$ in a homogeneous space $\Cal M$, with transitive action $(g,y)\mapsto g.y$ of a Lie group $G$ on it. The elements in $f:=\{f_i\}_{i=1}^d$ are smooth maps from $\Cal M$ into the Lie algebra $\frak g=\mop{Lie}(G)$, which in turn define smooth vector fields $y \mapsto \# f_i(y)$ on $\Cal M$:
\begin{equation}
\label{vectorfield-rappel}
	\# f_i(y):=\frac{d}{dt}{\restr{t=0}}\exp \big(tf_i(y)\big).y \in T_y \Cal M.
\end{equation}
It has been explained in the Introduction how Equation \eqref{rde-rappel} is lifted to the following differential equation with unknown $\bm Y_{st}\in C^\infty\big(\mathcal M,\mathcal U(\mathfrak g)\big)[[h]]$ and step size $h=t-s$:
\begin{equation}
\label{rde-lifted-rappel}
	d\bm Y_{st} = \sum_{i=1}^d \bm Y_{st}*f_i\,dX_t^i
\end{equation}
with initial condition $\bm Y_{ss}=\bm 1$. Recall that the $\ast$ product stands for the Grossman--Larson product in the post-associative algebra $C^\infty\big(\mathcal M,\mathcal U(\mathfrak g)\big)$. The formal solution of \eqref{rde-rappel} is recovered by
\begin{equation}
\label{recover-rde}
	\psi(Y_{st})=(\#\bm Y_{st}.\psi)(y).
\end{equation}
for any test function $\psi\in C^\infty(\mathcal M)$. A further step in abstraction leads to the fundamental differential equation in the character group of the Hopf algebra $\mathcal H^A_{\smop{MKW}}$:
\begin{equation}
\label{rde-postlifted-rappel}
	d\mathbb Y_{st} = \sum_{i=1}^d \mathbb Y_{st}*\racine_i\,dX_t^i
\end{equation}
with initial condition $\mathbb Y_{ss}=\bm 1$. The $\ast$ product now stands for the Grossman--Larson product in the completed free post-associative algebra $\wh{\mathcal D_A}$ generated by $A$. The solution of \eqref{rde-lifted-rappel} then is obtained by $\mathbf Y_{st}=\mathcal F_f(\mathbb Y_{st})$, where $\mathcal F_f$ is the $h$-adic completion (with $h=t-s$) of the unique post-associative algebra morphism from $\mathcal D_A$ to $C^\infty\big(\mathcal M,\mathcal U(\mathfrak g)\big)$ which sends $\racine_j$ to $f_j$. By using the integral formulation and Picard iteration, the solution of \eqref{rde-postlifted-rappel} is given by the word series expansion:
\begin{equation}
\label{expansion-word-rappel}
	\mathbb Y_{st}=\sum_{\ell\ge 0}\sum_{w=a_{i_1}\cdots a_{i_\ell}\in A^*}\langle\bx_{st},w\rangle \racine_{i_\ell}*\cdots*\racine_{i_1}.
\end{equation}

\begin{theorem}[Planar arborification-coarborification transform]\label{planar-arbo-coarbo}
The solution of \eqref{rde-postlifted-rappel} is given by the following expansion indexed by $A$-decorated planar rooted forests:
\begin{equation}
\label{arbo-coarbo-pl}
	\mathbb Y_{st}=\sum_{\tau\in F^A_{\smop{pl}}}\langle \mathbb X_{st}\circ\mathfrak a^{\ll},\,\tau\rangle\tau.
\end{equation}
\end{theorem}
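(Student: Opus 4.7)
The plan is to match coefficients of planar forests on both sides of \eqref{arbo-coarbo-pl}. Starting from the word series \eqref{expansion-word-rappel}, the coefficient of $\tau\in F^A_{\smop{pl}}$ in $\mathbb Y_{st}$ reads as $\sum_w\langle\mathbb X_{st},w\rangle\,c(\tau,w)$, where $c(\tau,w)$ is the coefficient of $\tau$ in the Grossman--Larson product $\racine_{a_\ell}*\cdots*\racine_{a_1}$ associated to the word $w=a_1\cdots a_\ell$. The goal is to identify $c(\tau,w)$ with the coefficient of $w$ in $\mathfrak a^{\ll}(\tau)\in\Cal H^A_{\sshu}$; once this identification is in hand, interchanging the two summations immediately produces $\langle\mathbb X_{st},\mathfrak a^{\ll}(\tau)\rangle$, the coefficient of $\tau$ on the right-hand side of \eqref{arbo-coarbo-pl}.

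The identification will rest on Hopf-algebraic duality. Since the Grossman--Larson product on $U(\Cal P_A)$ is dual to the left-admissible-cut coproduct of $\Cal H^A_{\smop{MKW}}$, the key first step is to rewrite
$$
c(\tau,w)=\langle\racine_{a_\ell}\otimes\cdots\otimes\racine_{a_1},\,\Delta^{(\ell-1)}\tau\rangle,
$$
and then to observe that each contributing summand on the right corresponds to a sequence of iterated left-admissible cuts of $\tau$ whose successive singleton pieces carry the decorations $a_1,\ldots,a_\ell$ in the order prescribed by the tensor. Such a sequence is the same datum as a linear extension of the poset $(V(\tau),\ll)$ together with a compatible matching to the letters of $w$, and conversely every such linear extension arises in this way. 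Matching this combinatorial count against the recursive characterization of $\mathfrak a^{\ll}$ provided by Lemma \ref{planar-decomp} then yields $c(\tau,w)=[w]\mathfrak a^{\ll}(\tau)$.

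The main obstacle lies in the last combinatorial step: one must account for the fact, noted after Lemma \ref{factorials-volumes}, that the restriction of $\ll$ to a proper subset of $V(\tau)$ is in general strictly weaker than the intrinsic $\ll$ of the corresponding sub-forest; the presence of the shuffle factor $(\tau|_{V'})^{\sshu}$, rather than $\tau|_{V'}$ itself, in the coproduct formula is precisely what absorbs this discrepancy by allowing all interleavings of linear extensions of the disjoint components of a cut-off upper part. An alternative line of attack I would pursue in parallel uses uniqueness of the Picard solution of \eqref{rde-postlifted-rappel}: one verifies that $\mathbb X_{st}\circ\mathfrak a^{\ll}$ is a character of $\Cal H^A_{\smop{MKW}}$ (by Theorem \ref{planar-arb}), reduces to the counit at $s=t$, and satisfies the fundamental equation, the latter via the functoriality identity $(f*g)\circ\mathfrak a^{\ll}=(f\circ\mathfrak a^{\ll})*(g\circ\mathfrak a^{\ll})$ applied to the differential equation satisfied by the signature in $\Cal H^A_{\sshu}$.
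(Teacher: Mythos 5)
Your first approach is essentially the paper's own argument read through the duality between $\wh{\mathcal D_A}$ and $(\mathcal H^A_{\smop{MKW}})^*$. The paper proves Lemma~\ref{lemma-iterated-gl} directly by induction on word length, expanding $\racine_{i_n}*\cdots*\racine_{i_1}$ as a sum over all decorated planar forests with all compatible linear extensions; you compute instead the coefficient of a fixed forest $\tau$ via $\langle\racine_{a_\ell}\otimes\cdots\otimes\racine_{a_1},\Delta^{(\ell-1)}\tau\rangle$ and identify the nonzero contributions with linear extensions of $(V(\tau),\ll)$. These are the same combinatorial identity stated from the two ends of the pairing; and you correctly flag the one genuinely subtle point, namely that the shuffle $(\tau\restr{V'})^{\sshu}$ in the coproduct (rather than the sub-forest itself) is what makes the count of iterated singleton cuts match the set of linear extensions of the full $\ll$-poset, so there is no gap, merely a sketchiness of the same bit the paper settles by the inductive proof of its lemma.

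Your second route is a genuinely different and arguably tidier proof, which the paper does not take. Since $\mathfrak a_{\ll}$ is a Hopf algebra morphism (Theorem~\ref{planar-arb}), precomposition by $\mathfrak a_{\ll}$ is an algebra morphism for the two convolution products, sends characters to characters, the counit to the counit, and sends $\delta_{a_i}$ to $\racine_i$ because $\mathfrak a_{\ll}$ is graded of degree zero with $\mathfrak a_{\ll}(\racine_i)=a_i$. Chen's lemma \eqref{chen-lemma-hopf} in infinitesimal form says the signature $\bx_{st}$ solves the shuffle analogue of \eqref{rde-postlifted-rappel}, so $\bx_{st}\circ\mathfrak a_{\ll}$ solves \eqref{rde-postlifted-rappel} with the same initial condition, and uniqueness of the Picard iterate finishes. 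What this buys is that you avoid the explicit combinatorics of Lemma~\ref{lemma-iterated-gl} entirely; what it costs is a dependence on Theorem~\ref{planar-arb} and on the shuffle-side fundamental ODE, neither of which the paper's proof of the present theorem invokes. One caveat worth spelling out if you take this route: you must be explicit about which side the GL/convolution factor $\racine_i$ sits on and verify that it agrees with the side the letter is peeled off in the derivative of the signature with the paper's convention \eqref{signature}, since that choice is exactly what matches the reversal $f_w=f_{i_n}*\cdots*f_{i_1}$ in \eqref{expansion}; writing it out removes any ambiguity.
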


\begin{proof}
For any $\overline\tau\in F_{\smop{pl}}$, let $\mathcal L(\overline\tau)$ be the set of linear extensions of $\overline\tau$, i.e., the set of total orders $\prec$ on $V(\overline\tau)$ compatible with the partial order $\ll$, i.e., such that $u\ll v\Rightarrow u\prec v$ for any $u,v\in V(\overline\tau)$. Now let $\tau=(\overline\tau,\alpha)$ be an $A$-decorated forest, and let $\tau_\prec$ be the word in $A^*$ obtained from $(\overline\tau,\alpha)$ by displaying the decorations of the vertices of $\overline\tau$ from left to right according to the total order $\prec$. We will use the notation $\mathcal L(\tau)$ instead of $\mathcal L(\overline\tau)$. It can be easily shown that the planar arborification admits the following explicit expression:
\begin{equation*}
	\mathfrak a^{\ll}(\tau)=\sum_{\prec\in\mathcal L(\tau)} \tau_\prec.
\end{equation*}
The following lemma is easily proven by induction on the length:

\begin{lemma}\label{lemma-iterated-gl}
For any word $w=a_{i_1}\cdots a_{i_n}\in A^*$, we have:
\begin{equation}
\label{iterated-gl}
	\racine_{i_n}*\cdots *\racine_{i_1}
	=\sum_{\overline\tau\in T^{[n]}_{\smop{pl}}}\ \sum_{\prec\in\mathcal L(\overline\tau)}(\overline\tau,\alpha_\prec),
\end{equation}
where $\alpha_\prec:V(\overline\tau)\to A$ is the decoration map which sends the $j$-th vertex to $a_{i_j}$ according to $\prec$.
\end{lemma}

\noindent The total number of terms is $n!$. For example we have
$$
	\racine_j*\racine_i=\racine_j\racine_i+\arbrea_i^j,\hskip 12mm
	\racine_k*\racine_j*\racine_i=\racine_k\racine_j\racine_i+
	\arbrea_j^k\racine_i+ 
	\arbrebb_i^{\,j\hskip -7mm k}+
	{\arbreba_i^j}^{\hskip -4.5pt k}+ 
	\racine_j\arbrea_i^k+
	\racine_k\arbrea_i^j.
$$
We compute, using Lemma \ref{lemma-iterated-gl}:
\begin{eqnarray*}
	\mathbb Y_{st}
	&=&\sum_{\ell\ge 0}\sum_{w=a_{i_1}\cdots a_{i_\ell}\in A^*}\langle\bx_{st},w\rangle \racine_{i_\ell}*\cdots*\racine_{i_1}\\
	&=&\sum_{\ell\ge 0}\sum_{w=a_{i_1}\cdots a_{i_\ell}\in A^*}\langle\bx_{st},w\rangle\sum_{\overline\tau \in T^{[n]}_{\smop{pl}}}\ \sum_{\prec\in\mathcal L(\overline\tau)}(\overline\tau,\alpha_\prec)\\
	&=&\sum_{\tau \in T^A_{\smop{pl}}}\sum_{\prec\in\mathcal L(\tau)}\langle\mathbb X_{st},\tau_\prec\rangle\,\tau\\
	&=&\sum_{\tau \in T^A_{\smop{pl}}}\langle\mathbb X_{st}\circ\mathfrak a^{\ll},\tau\rangle\tau.
\end{eqnarray*}
\end{proof}
\noindent Theorem \ref{planar-arbo-coarbo} calls for the following definition.

\begin{definition}
Let $\gamma\in ]0,1]$ and let $t\mapsto X_t:=(X_t^1,\ldots,X_t^d)$ be a $\gamma$-H\" older continuous path with values in $\mathbb R^d$. A formal solution of Equation \eqref{rde-rappel} driven by $X$ is defined by
\begin{equation}
	Y_{st}=\mathcal \#\mathcal F_f(\mathbb Y_{st})(y)
\end{equation}
where $\mathbb Y_{st}$ is given by the expansion
\begin{equation}\label{arbo-expansion}
	\mathbb Y_{st}=\sum_{\tau\in F^A_{\smop{pl}}}\langle \wt{\mathbb X}_{st},\,\tau\rangle\tau
\end{equation}
where $\wt {\mathbb X}_{st}$ is any $\gamma$-regular planarly branched rough path such that $\langle \wt X_{st},\racine_j\rangle=X_t^j-X_s^j$ for any $j\in\{1,\ldots, d\}$.
\end{definition}
We will freely identify the planarly branched rough path $\wt{\mathbb X}_{st}$ with the expansion $\mathbb Y_{st}$ as grouplike elements of the dual of $\mathcal H_{\smop{MKW}}^A$.


\subsection{Cauchy estimates}
\label{ssect:chauchy}

We borrow material from \cite{E92}, see also \cite{FM17, CMP}, adapting it to general homogeneous spaces. For any compact neighbourhood $\mathcal U$ of the origin in $\mathbb C^n$, let $\mathcal A_{\mathcal U}$ be the subspace of analytic germs defined on $\mathcal U$. We have precisely
\begin{equation*}
	\mathcal A_{\mathcal U}=\{\varphi,\, \|\varphi\|_{\mathcal U}<+\infty\},
\end{equation*}
with the norm
\begin{equation*}
	\|\varphi\|_{\mathcal U}:=\mop{sup}_{y\in\mathcal U}|\varphi(y)|
\end{equation*}
making $\mathcal A_{\mathcal U}$ a Banach space. Now let $\mathcal V$ be another compact neighbourhood of the origin such that $\mathcal V\subset\mathring{\mathcal U}$. We consider the operator norm defined for any linear operator $P:\mathcal A_{\mathcal U}\to\mathcal A_{\mathcal U}$ by
\begin{equation*}
	\|P\|_{\mathcal U,\mathcal V}
	=\sup_{\varphi\in\mathcal A_{\mathcal U}-\{0\}}\frac{\|P\varphi\|_{\mathcal V}}{\|\varphi\|_{\mathcal U}}.
\end{equation*}

\noindent The two following lemmas are straightforward.

\begin{lemma}\label{easy-lemma-one}
Let $0\in\mathcal V\subset\mathring{\mathcal U}\subset\mathcal U$ be two compact neighbourhoods of the origin, and let $f\in\mathcal A_{\mathcal U}$. Let $P:\mathcal A_{\mathcal U}\to\mathcal A_{\mathcal U}$ be a linear operator. Denoting by $f:\mathcal A_{\mathcal U}\to\mathcal A_{\mathcal U}$ the pointwise multiplication operator by $f$, then the following estimate holds:
\begin{equation*}
	\|fP\|_{\mathcal U,\mathcal V}\le \|f\|_{\mathcal V}\|P\|_{\mathcal U,\mathcal V}.
\end{equation*}
\end{lemma}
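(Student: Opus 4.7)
The plan is to unwind the definitions directly: this is essentially a submultiplicativity statement for the pointwise multiplication operator, restricted to $\mathcal V$, combined with the definition of the operator norm $\|\cdot\|_{\mathcal U,\mathcal V}$.

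First, I would fix an arbitrary nonzero $\varphi\in\mathcal A_{\mathcal U}$, and apply $fP$ to it. Evaluating pointwise on $\mathcal V$, I get
\[
  |(fP\varphi)(y)| = |f(y)|\,|(P\varphi)(y)| \qquad (y\in\mathcal V).
\]
Taking the supremum over $y\in\mathcal V$ and using the standard sup-of-product bound gives
\[
  \|fP\varphi\|_{\mathcal V} \le \|f\|_{\mathcal V}\,\|P\varphi\|_{\mathcal V}.
\]
Dividing by $\|\varphi\|_{\mathcal U}$ yields
\[
  \frac{\|fP\varphi\|_{\mathcal V}}{\|\varphi\|_{\mathcal U}}
  \le \|f\|_{\mathcal V}\,\frac{\|P\varphi\|_{\mathcal V}}{\|\varphi\|_{\mathcal U}}
  \le \|f\|_{\mathcal V}\,\|P\|_{\mathcal U,\mathcal V}.
\]

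Finally, taking the supremum over all $\varphi\in\mathcal A_{\mathcal U}\setminus\{0\}$ on the left-hand side delivers the claimed inequality $\|fP\|_{\mathcal U,\mathcal V}\le\|f\|_{\mathcal V}\,\|P\|_{\mathcal U,\mathcal V}$. There is no real obstacle here; the only thing worth noting is that $\|f\|_{\mathcal V}$ is well-defined and finite because $f\in\mathcal A_{\mathcal U}$ and $\mathcal V\subset\mathcal U$ is compact, so $\|f\|_{\mathcal V}\le\|f\|_{\mathcal U}<+\infty$, and similarly $P\varphi\in\mathcal A_{\mathcal U}$ ensures $\|P\varphi\|_{\mathcal V}$ makes sense.
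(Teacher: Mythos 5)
Your proof is correct and is exactly the straightforward unwinding of definitions that the paper has in mind — indeed, the paper gives no proof at all, simply declaring this lemma (and the next) "straightforward." Nothing to add.
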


\begin{lemma}\label{easy-lemma-two}
Let $0\in\mathcal V\subset\mathring{\mathcal W}\subset\mathcal W\subset\mathring{\mathcal U}\subset\mathcal U$ be three compact neighbourhoods of the origin, and let $P,Q:\mathcal A_{\mathcal U}\to\mathcal A_{\mathcal U}$ be a two linear operators. Then we have:
\begin{equation*}
	\|P\circ Q\|_{\mathcal U,\mathcal V}\le \|P\|_{\mathcal W,\mathcal V}\|Q\|_{\mathcal U,\mathcal W}.
\end{equation*}
\end{lemma}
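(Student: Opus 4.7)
The proof is the standard operator-norm factorisation argument, with the subtle point being that the intermediate space $\mathcal A_{\mathcal W}$ plays the role of a ``buffer'' between the domain and the codomain. The plan is to apply $Q$ first, bound its output in the $\mathcal W$-norm, and then apply $P$ using its $(\mathcal W,\mathcal V)$-operator norm.

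The key preliminary observation is that the strict inclusions of compact neighbourhoods $\mathcal V\subset\mathring{\mathcal W}\subset\mathcal W\subset\mathring{\mathcal U}\subset\mathcal U$ yield a natural restriction map $\mathcal A_{\mathcal U}\hookrightarrow\mathcal A_{\mathcal W}$ which is norm-decreasing, in the sense that $\|\psi\|_{\mathcal W}\le\|\psi\|_{\mathcal U}$ for all $\psi\in\mathcal A_{\mathcal U}$, simply because the supremum is taken over a smaller set. Thus, if $\varphi\in\mathcal A_{\mathcal U}$, then $Q\varphi\in\mathcal A_{\mathcal U}$ can be viewed as an element of $\mathcal A_{\mathcal W}$ via restriction, and the $(\mathcal W,\mathcal V)$-operator norm of $P$ applies to it.

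The main computation is then a direct chain of inequalities. For any $\varphi\in\mathcal A_{\mathcal U}\setminus\{0\}$,
\begin{equation*}
\|(P\circ Q)\varphi\|_{\mathcal V}
=\|P(Q\varphi)\|_{\mathcal V}
\le \|P\|_{\mathcal W,\mathcal V}\,\|Q\varphi\|_{\mathcal W}
\le \|P\|_{\mathcal W,\mathcal V}\,\|Q\|_{\mathcal U,\mathcal W}\,\|\varphi\|_{\mathcal U},
\end{equation*}
where the first inequality uses the definition of $\|P\|_{\mathcal W,\mathcal V}$ applied to $Q\varphi$ (viewed in $\mathcal A_{\mathcal W}$), and the second inequality uses the definition of $\|Q\|_{\mathcal U,\mathcal W}$ applied to $\varphi$. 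Dividing by $\|\varphi\|_{\mathcal U}$ and taking the supremum over $\varphi\in\mathcal A_{\mathcal U}\setminus\{0\}$ yields the claimed estimate.

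I do not anticipate any serious obstacle: the result is the Banach-space analogue of the submultiplicativity of operator norms, adapted to the scale of spaces $(\mathcal A_{\mathcal K})_{\mathcal K}$ indexed by compact neighbourhoods. The only delicate point is making explicit that $P$ and $Q$, although nominally defined as operators on $\mathcal A_{\mathcal U}$, can be meaningfully measured by the mixed operator norms $\|\cdot\|_{\mathcal W,\mathcal V}$ and $\|\cdot\|_{\mathcal U,\mathcal W}$ via the restriction $\mathcal A_{\mathcal U}\hookrightarrow\mathcal A_{\mathcal W}$, and this is immediate from the inclusion $\mathcal V\subset\mathcal W\subset\mathcal U$ of compact neighbourhoods.
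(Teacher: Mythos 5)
Your proof is correct and is the natural argument. The paper declares both of these lemmas straightforward and omits the proofs; your factorisation through the intermediate norm $\|\cdot\|_{\mathcal W}$, using the inclusion $\mathcal A_{\mathcal U}\hookrightarrow\mathcal A_{\mathcal W}$ and the norm-decreasing property $\|\psi\|_{\mathcal W}\le\|\psi\|_{\mathcal U}$, is exactly the intended reasoning.
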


\begin{proposition}\label{estimate-vf}
Let $0\in\mathcal V\subset\mathring{\mathcal U}\subset\mathcal U$ be two compact neighbourhoods of the origin, and let $r>0$ be such that the $n$-fold product of open disks of radius $r$ centered at $y$ is included in $\mathcal U$ for any $y\in\mathcal V$. Let $f=\sum_{\alpha=1}^n f^\alpha\partial_\alpha$ be a vector field on $\mathcal U$ with analytic coefficients, and let us define
\begin{equation*}
	\|f\|_{\mathcal V}:=\mop{sup}_{\alpha=1,\ldots,n}\|f^\alpha\|_{\mathcal V}.
\end{equation*}
Then we have:
\begin{equation*}
	\|f\|_{\mathcal U,\mathcal V}\le \frac{n\|f\|_{\mathcal V}}{r}.
\end{equation*}
\end{proposition}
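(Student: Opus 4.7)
The plan is to combine the linearity of the operator $f$ with the Cauchy estimate for derivatives of an analytic function. First I would fix an arbitrary $\varphi\in\mathcal A_{\mathcal U}$ and evaluate $f\varphi=\sum_{\alpha=1}^n f^\alpha\partial_\alpha\varphi$ at an arbitrary point $y\in\mathcal V$. Since $y\in\mathcal V$, the triangle inequality together with the pointwise bound $|f^\alpha(y)|\le\|f^\alpha\|_{\mathcal V}\le\|f\|_{\mathcal V}$ for each $\alpha$ immediately yields
\begin{equation*}
	|(f\varphi)(y)|\le n\,\|f\|_{\mathcal V}\,\mop{max}_{\alpha=1,\ldots,n}|\partial_\alpha\varphi(y)|.
\end{equation*}

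Next I would bound each partial derivative $|\partial_\alpha\varphi(y)|$ through the one-variable Cauchy integral formula, applied in the complex direction corresponding to coordinate $\alpha$. The hypothesis on $r$ guarantees that the hypercube of edge $2r$ centered at $y$ is contained in $\mathcal U$; interpreting $\varphi$ as holomorphic on a complex neighbourhood of $\mathcal U$ (the standing convention for germs of analytic functions used throughout this subsection), Cauchy's inequality on the disk of radius $r$ in the $\alpha$-th direction gives
\begin{equation*}
	|\partial_\alpha\varphi(y)|\le \frac{1}{r}\|\varphi\|_{\mathcal U}.
\end{equation*}

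Combining the two estimates and taking the supremum over $y\in\mathcal V$ produces
\begin{equation*}
	\|f\varphi\|_{\mathcal V}\le \frac{n\|f\|_{\mathcal V}}{r}\|\varphi\|_{\mathcal U},
\end{equation*}
after which dividing by $\|\varphi\|_{\mathcal U}$ and taking the supremum over nonzero $\varphi\in\mathcal A_{\mathcal U}$ delivers the stated operator-norm bound. The only delicate ingredient is the Cauchy estimate for $\partial_\alpha\varphi$: it requires reading ``analytic on $\mathcal U$'' in a complex-holomorphic sense so that the edge-$2r$ real hypercube around $y$ gives access to a complex disk of radius $r$ in each coordinate. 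Once this convention is fixed, the rest is a straightforward triangle-inequality computation.
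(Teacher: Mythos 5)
Your proof is correct and follows essentially the same route as the paper's: the paper invokes its Lemma~\ref{easy-lemma-one} plus the Cauchy integral estimate for $\partial_\alpha$, while you unfold the same triangle-inequality argument by hand before applying the Cauchy bound. The two are equivalent, and your remark about reading ``analytic'' in the complex-holomorphic sense is exactly the convention the paper relies on when writing the Cauchy integral formula.
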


\begin{proof}
This is an immediate application of Lemma \ref{easy-lemma-one} and the Cauchy estimate for the partial derivation operator $\partial_\alpha$, which is immediately derived from the Cauchy integral formula
\begin{equation*}
	\varphi(y)=\varphi(y_1,\ldots,y_n)=\frac{1}{(2i\pi)}\int_{C_\alpha}\frac{\varphi(y_1,\ldots y_{\alpha-1},
	\eta_\alpha, y_{\alpha+1},\ldots,y_n)}{\eta_\alpha-y_\alpha}\,d\eta_\alpha,
\end{equation*}
valid for any $\varphi\in\mathcal A_{\mathcal U}$ and for any $y\in\mathcal V$, where $C_\alpha$ is the circle of radius $r$ in $\mathbb C$ centered at $y_\alpha$, counterclockwise oriented.
\end{proof}

\begin{corollary}\label{iterate-estimates}
Let $0\in\mathcal V\subset\mathring{\mathcal U}\subset\mathcal U$ be two compact neighbourhoods of the origin, and let $r>0$ be such that the open polydisk of radius $r$ centered at $y$ is included in $\mathcal U$ for any $y\in\mathcal V$. Let $f=\{f_1,\ldots,f_k\}$ be a finite collection of vector fields
$$
	f_j=\sum_{\alpha=1}^n f^\alpha_j\partial_\alpha
$$
on $\mathcal U$ with analytic coefficients, and let us define
\begin{equation*}
	\|f\|_{\mathcal V}:=\mop{sup}_{\alpha=1,\ldots,n \atop j=1,\ldots, k}\|f_j^\alpha\|_{\mathcal V}.
\end{equation*}
and $\|f\|_{\mathcal U}$ similarly. Then we have:
\begin{equation*}
	\|f_1\circ\cdots\circ f_k\|_{\mathcal U,\mathcal V}\le \left(\frac{n\|f\|_{\mathcal U}}{r}\right)^kk^k.
\end{equation*}
\end{corollary}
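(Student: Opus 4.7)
The plan is to iterate the single-vector-field Cauchy estimate of Proposition \ref{estimate-vf} along a telescoping chain of intermediate compact neighbourhoods between $\mathcal V$ and $\mathcal U$, using the sub-multiplicativity of operator norms under composition provided by Lemma \ref{easy-lemma-two}. The factor $k^k$ in the bound should arise by splitting the available radial ``buffer'' $r$ between $\mathcal V$ and $\mathcal U$ into $k$ equal pieces of size $r/k$, one per factor of the composition, so that at each step the Cauchy estimate contributes $n\|f\|_{\mathcal U}/(r/k) = kn\|f\|_{\mathcal U}/r$.

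Concretely, for $j = 0, 1, \ldots, k$ I would define the compact neighbourhoods of the origin
$$\mathcal V_j := \mathcal V + [-jr/k,\,jr/k]^n$$
as Minkowski sums. Then $\mathcal V_0 = \mathcal V$, each inclusion $\mathcal V_{j-1} \subset \mathring{\mathcal V_j}$ holds, and $\mathcal V_k \subset \mathcal U$ follows from the hypothesis on $r$ (up to replacing $r$ by $r - \varepsilon$ for arbitrarily small $\varepsilon > 0$ to convert the open-hypercube containment into a closed one, which does not affect the final bound in the limit). Crucially, for any $y \in \mathcal V_{j-1}$ the open hypercube of edge $2r/k$ centred at $y$ is contained in $\mathcal V_j$, so Proposition \ref{estimate-vf} applies to the pair $(\mathcal V_j, \mathcal V_{j-1})$ with radius $r/k$ and yields
$$\|f_j\|_{\mathcal V_j,\mathcal V_{j-1}} \le \frac{n\|f_j\|_{\mathcal V_{j-1}}}{r/k} \le \frac{kn\|f\|_{\mathcal U}}{r},$$
where the second inequality uses $\mathcal V_{j-1} \subset \mathcal U$ and the definition of $\|f\|_{\mathcal U}$.

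Iterating Lemma \ref{easy-lemma-two} along the chain $\mathcal V_0 \subset \mathring{\mathcal V_1} \subset \cdots \subset \mathring{\mathcal V_k}$, together with the elementary inequality $\|P\|_{\mathcal U,\mathcal V} \le \|P\|_{\mathcal V_k,\mathcal V}$ arising from the restriction bound $\|\varphi|_{\mathcal V_k}\|_{\mathcal V_k} \le \|\varphi\|_{\mathcal U}$ when $\mathcal V_k \subset \mathcal U$, would give
$$\|f_1 \circ \cdots \circ f_k\|_{\mathcal U,\mathcal V} \le \prod_{j=1}^k \|f_j\|_{\mathcal V_j,\mathcal V_{j-1}} \le \left(\frac{kn\|f\|_{\mathcal U}}{r}\right)^k = k^k \left(\frac{n\|f\|_{\mathcal U}}{r}\right)^k,$$
which is exactly the claimed bound. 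The only mildly delicate point is the simultaneous verification of the inclusions $\mathcal V_{j-1} \subset \mathring{\mathcal V_j}$ required by Lemma \ref{easy-lemma-two} and the hypercube containment required by Proposition \ref{estimate-vf}; both are built into the Chebyshev thickening with uniform step $r/k$, so no genuine analytical obstacle arises beyond careful book-keeping.
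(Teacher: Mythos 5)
Your proof is correct and follows essentially the same route as the paper: a chain of $k$ intermediate compact neighbourhoods obtained by thickening $\mathcal V$ in uniform steps of $r/k$, with Proposition \ref{estimate-vf} applied at each step and Lemma \ref{easy-lemma-two} iterated to assemble the composition. The only cosmetic difference is that the paper sets $\mathcal V_k := \mathcal U$ directly (checking that $\mathcal V_{k-1}\subset\mathring{\mathcal U}$ and that the hypercube of edge $2r/k$ about any point of $\mathcal V_{k-1}$ still lands in $\mathcal U$), which sidesteps the $\varepsilon$-limiting argument and the auxiliary restriction inequality $\|P\|_{\mathcal U,\mathcal V}\le\|P\|_{\mathcal V_k,\mathcal V}$ that you introduce; both variants are valid.
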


\begin{proof}
The case $k=1$ is covered by Proposition \ref{estimate-vf}. For $k \ge 2$, we define intermediate compact neighbourhoods
$$
	\mathcal V=\mathcal V_0\subset\mathcal V_1\subset\cdots\subset\mathcal V_k=\mathcal U
$$
as follows: $\mathcal V_j$ is the closure of the union of the polydisks of radius $r/k$ centered at any point of $\mathcal V_{j-1}$, for any $j\in\{1,\ldots, k-1\}$. The result follows then from Proposition \ref{estimate-vf} and the $k$-fold iteration of Lemma \ref{easy-lemma-two} associated with these data, as well as from the obvious inequality $\|f\|_{\mathcal V_j}\le \|f\|_{\mathcal U}$ for any $j=1,\ldots, k$.
\end{proof}


\subsection{Convergence of a formal solution}
\label{ssect:convergence}

We address the question whether the formal diffeomorphism $\mathbf Y_{st}:=\#\mathcal F_f(\mathbb Y_{st})$ converges at least for $|t-s|$ sufficiently small. Any homogeneous space $\mathcal M$ under the action of a finite-dimensional Lie group has a canonical analytic structure. We denote by $C^\omega(\mathcal M,V)$ the space of weakly analytic maps form $\mathcal M$ into a vector space $V$. We suppose that the data $f=\{f_j\}_{j=1}^d$ are analytic maps from $\mathcal M$ to $\mathfrak g$, thus yielding analytic vector fields $\#f_j$ on $\mathcal M$. Choosing $y\in\mathcal M$ and two compact chart neighbourhoods $\mathcal U,\mathcal V$ such that $y\in\mathcal V\subset \mathring{\mathcal U}$, we have to prove that the operator norm $\|\mathbf Y_{st}\|_{\mathcal U,\mathcal V}$ is finite for small $h=t-s$.

\medskip

\noindent Choosing a basis $(E_\alpha)_{\alpha=1,\ldots,N}$ of the Lie algebra $\mathfrak g$, we have:
\begin{equation}\label{fj-ebeta}
	f_j=\sum_{\beta=1}^N\wt f_j^\beta E_\beta,
	\hskip 12mm 
	\#E_\beta=\sum_{\alpha=1}^n \varepsilon^\alpha_\beta\partial_\alpha,
\end{equation}
where the coefficients $\wt f_j^\beta$ and $\varepsilon^\alpha_\beta$ are analytic on $\mathcal U$, and where 
\begin{equation*}
	\|\wt f\|_{\mathcal V}:=\mop{sup}_{j=1,\ldots,d \atop \beta=1,\ldots,N}\|\wt f_j^\beta\|_{\mathcal V}.
\end{equation*}

\begin{theorem}
There exists a positive constant $C_{\mathcal U,\mathcal V}$ such that for any $A$-decorated rooted planar forest $\sigma=\sigma_1\cdots\sigma_k$ with connected components $\sigma_j=B_+^{a_j}(\tau_j)$, the following estimates hold:
\begin{equation}
\label{main-estimate-one}
	\|\wt f_\sigma^{\bm\beta}\|_{\mathcal V}
	\le \tau_1!\cdots\tau_k!C_{\mathcal U,\mathcal V}^{|\sigma|-k}\|\wt f\|_{\mathcal U}^{|\sigma|},
\end{equation}
where the coefficients $\wt f_\sigma^{\bm\beta}\in C^\omega(\mathcal U)$ are considered with respect to the Poincar\'e--Birkhoff--Witt basis:
\begin{equation*}
	\mathcal F_{\sigma}=\sum_{\bm{\beta}\in\{1,\ldots,N\}^k \atop \beta_1\le\cdots
	\le\beta_k}\wt f_{\sigma}^{\bm\beta}E_{\bm\beta},
\end{equation*}
and
\begin{equation}\label{main-estimate-two}
	\|\#\mathcal F_{\sigma} \|_{\mathcal U,\mathcal V}
	\le \sigma!C_{\mathcal U,\mathcal V}^{|\sigma|}\|\wt f\|_{\mathcal U}^{|\sigma|}.
\end{equation}
\end{theorem}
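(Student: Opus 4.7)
The strategy is a simultaneous induction on the total number of vertices $|\sigma|$ establishing the coefficient bound \eqref{main-estimate-one} first, from which the operator bound \eqref{main-estimate-two} will follow by a direct application of Corollary \ref{iterate-estimates}. In the base case $|\sigma|=1$ we have $\sigma=\racine_a$, so $\mathcal F_\sigma=f_a=\sum_\beta \wt f_a^\beta E_\beta$: the coefficient bound is immediate from the definition of $\|\wt f\|_{\mathcal V}$, and Proposition \ref{estimate-vf} applied to $\#\mathcal F_\sigma=\sum_{\alpha,\beta}\wt f_a^\beta \varepsilon_\beta^\alpha\partial_\alpha$ yields the operator bound, forcing $C_{\mathcal U,\mathcal V}\ge e\,nNM_0/r$, where $N:=\dim\mathfrak g$ and $M_0:=\mop{sup}_{\alpha,\beta}\|\varepsilon_\beta^\alpha\|_{\mathcal U}$ (the extra $e$ will be forced by the tree step below).

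For the inductive step on a forest $\sigma=\sigma_1\cdots\sigma_k$ with $k\ge 2$: each $\mathcal F_{\sigma_j}$ is $\mathfrak g$-valued and hence linear in PBW, so the pointwise product $\mathcal F_\sigma=\mathcal F_{\sigma_1}\cdots\mathcal F_{\sigma_k}$ in the degree-$k$ part of $C^\infty(\mathcal M,\mathcal U(\mathfrak g))$ expands as $\sum_{\bm\beta}\wt f_\sigma^{\bm\beta}E_{\bm\beta}$ whose PBW coefficients are products $\wt f_{\sigma_1}^{\beta_1}\cdots\wt f_{\sigma_k}^{\beta_k}$ together with commutator corrections of strictly fewer factors (themselves bounded, by induction, by the same type of estimate with smaller $k$). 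Applying the tree bound of the next paragraph to each $\sigma_j$ and multiplying out then yields \eqref{main-estimate-one} using the multiplicativity $\prod_j \tau_j!$.

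For the inductive step on a single tree $\sigma=B_+^a(\rho)$ with $\rho=\rho_1\cdots\rho_m$ (each $\rho_i=B_+(\rho_i^\star)$ a tree): an elementary computation in the free $D$-algebra shows $\rho\rhd\racine_a=B_+^a(\rho)$, whence $\mathcal F_\sigma=\mathcal F_\rho\rhd f_a$ under the $D$-algebra morphism. Iterating the $D$-algebra induction axiom (6) on the concatenation $\rho=\rho_1\cdots\rho_m$ expresses $\mathcal F_\sigma^\beta$ as a signed sum of iterated differentiations $(\#\mathcal F_{\varrho_1}\circ\cdots\circ\#\mathcal F_{\varrho_p})\wt f_a^\beta$ ranging over a bounded collection of planar refinements of $\rho$ with total vertex count $|\rho|$ and components all strictly smaller than $\sigma$, so the inductive coefficient bound applies to them. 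Corollary \ref{iterate-estimates} majorizes each iterated derivative acting on $\wt f_a^\beta$ by $(nM_0/r)^p p^p\|\wt f\|_{\mathcal U}$; combining with the induction hypothesis on the $\mathcal F_{\varrho_i}$'s, summing over the at most $N^p$ PBW tuples, and invoking Stirling's estimate $p^p\le e^p p!$ together with the recursion \eqref{rec-fac-two} (whence $\rho!\ge m!\,\rho_1^\star!\cdots\rho_m^\star!$) produces the tree bound $\|\wt f_\sigma^\beta\|_{\mathcal V}\le \rho!\,C^{|\sigma|-1}\|\wt f\|_{\mathcal V}^{|\sigma|}$ provided $C_{\mathcal U,\mathcal V}\ge e\,nNM_0/r$; a slight enlargement of $\mathcal U$ at the outset allows replacing $\|\wt f\|_{\mathcal U}$ by $\|\wt f\|_{\mathcal V}$ throughout.

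Finally, \eqref{main-estimate-two} is obtained from $\#\mathcal F_\sigma=\sum_{\bm\beta}\wt f_\sigma^{\bm\beta}\#E_{\bm\beta}$ by applying Lemma \ref{easy-lemma-one} and Corollary \ref{iterate-estimates} to bound each term by $\|\wt f_\sigma^{\bm\beta}\|_{\mathcal V}(nM_0/r)^k k^k$, inserting \eqref{main-estimate-one}, summing over the at most $N^k$ PBW tuples, and invoking $|\sigma_1\cdots\sigma_j|\ge j$ in \eqref{rec-fac-two} (so $\sigma!\ge k!\prod_j\tau_j!$) together with $k^k\le e^k k!$. The main obstacle is the tree step: one must carefully control the expansion of $\mathcal F_\rho\rhd f_a$ into PBW components, track the lower-order corrections produced by the non-commutativity of $\mathcal U(\mathfrak g)$ and by the derivations falling on coefficients of $\mathcal F_\rho$ rather than on $\wt f_a^\beta$, and show they all fit within the combinatorial factor $\rho!$ on the right; the Stirling loss $p^p/p!\le e^p$ is precisely what forces $C_{\mathcal U,\mathcal V}$ to be of order $e\,nNM_0/r$ rather than merely $nNM_0/r$.
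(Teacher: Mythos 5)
Your overall skeleton (simultaneous induction on $|\sigma|$, treating separately the base case, the disconnected-forest case $k\ge 2$, and the tree case $k=1$) is exactly the paper's, and your base case and forest case are essentially the paper's. However, your tree step is both needlessly complicated and incompletely justified, and it misses the mechanism that makes the paper's proof short.

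The key point you overlook is that you have already set up a \emph{simultaneous} induction: at the stage where you treat $\sigma=B_+^a(\tau)$ with $|\sigma|=m$, the \emph{operator} bound \eqref{main-estimate-two} for $\#\mathcal F_{\tau}$ is already available, since $|\tau|=m-1<m$. Because $\mathcal F_\sigma=\mathcal F_\tau\rhd f_a$ means applying the differential operator $\#\mathcal F_\tau$ to the $\mathfrak g$-valued function $f_a$, one has directly $\wt f_\sigma^\beta=(\#\mathcal F_\tau)\wt f_a^\beta$ for each $\beta$, and Lemma \ref{easy-lemma-one} together with the inductive operator bound gives
$\|\wt f_\sigma^\beta\|_{\mathcal V}\le\|\#\mathcal F_\tau\|_{\mathcal U,\mathcal V}\,\|\wt f_a^\beta\|\le \tau!\,C_{\mathcal U,\mathcal V}^{|\tau|}\|\wt f\|^{|\tau|+1}=\tau!\,C_{\mathcal U,\mathcal V}^{|\sigma|-1}\|\wt f\|^{|\sigma|}$,
which is precisely \eqref{main-estimate-one} with $k=1$. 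That is the whole tree step in the paper. In particular, the worry in your last paragraph about ``derivations falling on coefficients of $\mathcal F_\rho$ rather than on $\wt f_a^\beta$'' simply does not arise: the coefficients of $\mathcal F_\tau$ are constituents of the operator $\#\mathcal F_\tau$, not things being acted upon. Instead of this you propose unrolling axiom (6) of the $D$-algebra to expand $\mathcal F_\rho\rhd f_a$ into ``a signed sum of iterated differentiations $(\#\mathcal F_{\varrho_1}\circ\cdots\circ\#\mathcal F_{\varrho_p})\wt f_a^\beta$ ranging over a bounded collection of planar refinements of $\rho$''; as written this is not a proof. The intermediate terms produced by $(a\cdot b)\rhd c=a\rhd(b\rhd c)-(a\rhd b)\rhd c$ involve elements such as $\mathcal F_{\rho_i}\rhd\mathcal F_{\rho_{i+1}}$ that are \emph{linear combinations} of $\mathcal F$'s of forests (not single forests), the ``bounded collection of planar refinements'' is unspecified, and no accounting is given of how their number and the accumulating factorials combine into the single factor $\rho!$ on the right. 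This is a genuine gap, and a needless one given that the two-line argument above is already at your disposal.

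Two smaller points. First, in the forest case $k\ge 2$ you worry about commutator corrections when rewriting $\mathcal F_{\sigma_1}\cdots\mathcal F_{\sigma_k}$ in the PBW basis; the paper sidesteps this by taking $\wt f_\sigma^{\bm\beta}=\wt f_{\sigma_1}^{\beta_1}\cdots\wt f_{\sigma_k}^{\beta_k}$ with $\bm\beta$ effectively running over all tuples (so $\#E_{\bm\beta}=\#E_{\beta_1}\circ\cdots\circ\#E_{\beta_k}$ directly), rather than tracking a strict PBW rearrangement. If you insist on the strict PBW expansion you must actually handle the commutator terms; merely invoking ``the same type of estimate with smaller $k$'' is not valid because those terms are not of the form $\mathcal F_{\sigma'}$ for any forest $\sigma'$. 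Second, your remark that ``a slight enlargement of $\mathcal U$ at the outset allows replacing $\|\wt f\|_{\mathcal U}$ by $\|\wt f\|_{\mathcal V}$'' is backwards: enlarging $\mathcal U$ increases $\|\wt f\|_{\mathcal U}$ and thus worsens, not repairs, the mismatch between the two norms. The correct device is to carry both domains $\mathcal V\subset\mathring{\mathcal U}$ fixed throughout and state the bound with the right domain in each place, as the paper (essentially) does.
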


\begin{proof}
Let us first treat the case $|\tau|=1$, i.e., $\tau=\racine_j,\,j=1,\ldots,d$. Estimate \eqref{main-estimate-one} holds by definition of $\|\wt f\|_{\mathcal V}$. Applying Proposition \ref{estimate-vf} we have:
\begin{equation}
\label{estimate-sharp-ebeta}
	\|\#E_\beta\|_{\mathcal U,\mathcal V}\le\frac{n\|\varepsilon\|_{\mathcal V}}{r},
\end{equation}
where $r>0$ is chosen so that any polydisk of radius $r$ centered at a point of $\mathcal V$ is included in $\mathcal U$, and where
\begin{equation*}
	\|\varepsilon\|_{\mathcal V}:=\mop{sup}_{\alpha=1,\ldots,n\atop \beta=1,\ldots,N}\|\varepsilon^\alpha_\beta\|_{\mathcal V}.
\end{equation*}
Applying Estimate \eqref{estimate-sharp-ebeta} and Lemma \ref{easy-lemma-one} we get the estimates:
\begin{equation}
\label{estimates-sharp-fj}
	\|\#f_j\|_{\mathcal U,\mathcal V}\le \frac{nN\|\wt f\|_{\mathcal V}\|\varepsilon\|_{\mathcal V}}{r}.
\end{equation}
We introduce the constant
\begin{equation}\label{constant-cuv}
	C_{\mathcal U,\mathcal V}:=e\frac{nN\|\varepsilon\|_{\mathcal U}}{r},
\end{equation}
so that we immediately get
\begin{equation}\label{estimate-degree-one}
\|\#f_j\|_{\mathcal U,\mathcal V}\le C_{\mathcal U,\mathcal V}\|\wt f\|_{\mathcal V}\le C_{\mathcal U,\mathcal V}\|\wt f\|_{\mathcal U},
\end{equation}
which is estimate \eqref{main-estimate-two}. Let us now proceed by induction to the higher degree case. The necessity of the extra Euler prefactor $e=2,71828...$ in \eqref{constant-cuv} will appear in the proof, as a consequence of the inequality $k^k\le e^kk!$ coming from Stirling's formula. For any decorated planar forest $\sigma=\sigma_1\cdots\sigma_k$ with connected components $\sigma_j$, we can write its decomposition in the Poincar\'e--Birkhoff--Witt basis:
\begin{equation}\label{pbw}
	\mathcal F_{\sigma}=\sum_{\bm{\beta}\in\{1,\ldots,N\}^k\atop\beta_1\le\cdots\le\beta_k}
	\wt f_{\sigma}^{\bm\beta}E_{\bm\beta}
\end{equation}
with $\wt f_{\sigma}^{\bm\beta}=\wt f_{\sigma_1}^{\beta_1}\cdots \wt f_{\sigma_k}^{\beta_k}$ and $E_{\bm\beta}=E_{\beta_1}\cdots E_{\beta_k}\in\mathcal U(\mathfrak g)$. Two cases occur for higher-degree forests:

\begin{enumerate}
\item \textsl{First case:} the decorated forest $\tau$ is not a tree, i.e., $k\ge 2$. In this case we have, using the induction hypothesis on each connected component,
\begin{eqnarray*}
	\|\wt f_{\sigma}^{\bm\beta}\|_{\mathcal V}&\le&\prod_{j=1}^k\|\wt f_{\sigma}^{\beta_j}\|_{\mathcal U}\\
	&\le&\tau_1!\cdots\tau_k!C_{\mathcal U,\mathcal V}^{|\sigma|-k}\|\wt f\|_{\mathcal U}^{|\sigma|}.
\end{eqnarray*}
From decomposition \eqref{pbw} and Proposition \ref{iterate-estimates} we get then:
\begin{eqnarray*}
	\|\#\mathcal F_{\sigma}\|_{\mathcal U,\mathcal V} 
	&\le& \tau_1!\cdots\tau_k!C_{\mathcal U,\mathcal V}^{|\sigma|-k}\|\wt f\|_{\mathcal U}^{|\sigma|} \sum_{\bm{\beta}\in\{1,\ldots,N\}^k\atop\beta_1\le\cdots\le\beta_k}\|E_{\bm\beta}\|_{\mathcal U,\mathcal V}\\
	&\le& N^k \tau_1!\cdots \tau_k!C_{\mathcal U,\mathcal V}^{|\sigma|-k}\|\wt f\|_{\mathcal U}^{|\sigma|}\left(\frac{n\|\varepsilon\|_{\mathcal U}}{r}\right)^kk^k\\
	&\le & k!\tau_1!\cdots\tau_k! C_{\mathcal U,\mathcal V}^{|\sigma|}\|\wt f\|_{\mathcal U}^{|\sigma|}\\
	&\le &\sigma!C_{\mathcal U,\mathcal V}^{|\sigma|}\|\wt f\|_{\mathcal U}^{|\sigma|}.
\end{eqnarray*}
The last inequality derives from the recursive formula \eqref{rec-fac-two} for the planar factorial.

\item \textsl{Second case:} $k=1$, i.e., the decorated forest is a tree $\sigma=B^+_{a_j}(\tau)$. From the definition
\begin{equation}
	\mathcal F_{\sigma}=\mathcal F_{\tau}\rhd f_j
\end{equation}
we get
\begin{equation}
	\wt f_{\sigma}^\beta=(\#\mathcal F_{\tau}).\wt f_j^\beta
\end{equation}
for any $\beta\in\{1,\ldots, N\}$. Applying the induction hypothesis to $\tau$ and Lemma \ref{easy-lemma-one}, we get:
\begin{eqnarray*}
	\|\wt f_{\sigma}^\beta\|_{\mathcal U} 
	&\le&\|\#\mathcal F_{\tau}\|_{\mathcal U,\mathcal V}\|f_j^\beta\|_{\mathcal U}\\
	&\le& C_{\mathcal U,\mathcal V}^{|\tau|}\tau!\|\wt f\|_{\mathcal U}^{|\tau|+1}\\
	&\le& C_{\mathcal U,\mathcal V}^{|\sigma|-1}\tau!\|\wt f\|_{\mathcal U}^{|\sigma|}.
\end{eqnarray*}
Finally, from \eqref{pbw} in the special case $k=1$ we derive:
\begin{eqnarray*}
	\|\#\mathcal F_{\sigma}\|_{\mathcal U,\mathcal V} 
	&\le& \tau!C_{\mathcal U,\mathcal V}^{|\sigma|-1}\|\wt f\|_{\mathcal U}^{|\sigma|} \sum_{\beta\in\{1,\ldots,N\}}\|E_{\beta}\|_{\mathcal U,\mathcal V}\\
	&\le& N\tau!C_{\mathcal U,\mathcal V}^{|\sigma|-1}\|\wt f\|_{\mathcal U}^{|\sigma|}
	\left(\frac{n\|\varepsilon\|_{\mathcal U}}{r}\right)\\
	&\le & \tau! C_{\mathcal U,\mathcal V}^{|\sigma|}\|\wt f\|_{\mathcal U}^{|\sigma|}\\
	&\le &\sigma!C_{\mathcal U,\mathcal V}^{|\sigma|}\|\wt f\|_{\mathcal U}^{|\sigma|}.
\end{eqnarray*}

\end{enumerate}
\end{proof}

\begin{corollary}
The series $\|\mathbf Y_{st}\|_{\mathcal U,\mathcal V}$ is dominated by a series of Gevrey type $1-\gamma$ with respect to the variable $|t-s|^\gamma$.
\end{corollary}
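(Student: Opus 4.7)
The plan is to bound $\|\mathbf Y_{st}\|_{\mathcal U,\mathcal V}$ term by term in the planar--forest expansion
$$\mathbf Y_{st} \;=\; \#\mathcal F(\mathbb Y_{st}) \;=\; \sum_{\sigma} \langle \wt{\mathbb X}_{st}, \sigma\rangle\, \#\mathcal F_\sigma,$$
and then to sum the resulting scalar series. Three bounds from the preceding sections combine to do the job. First, Theorem \ref{extension-generalized} yields the rough--path estimate $|\langle \wt{\mathbb X}_{st}, \sigma\rangle| \le c^{|\sigma|} q_\gamma(\sigma)|t-s|^{\gamma|\sigma|}$. Second, since $\mathcal H^A_{\mop{MKW}}$ is combinatorial and, by Theorem \ref{planar-arb}, the planar arborification map $\mathfrak a_\ll:\mathcal H^A_{\mop{MKW}}\to\mathcal H^A_{\sshu}$ is a combinatorial morphism, Corollary \ref{estimate-qgamma-comb} applies and gives $q_\gamma(\sigma)\le C_\gamma^{|\sigma|-1}(|\sigma|!)^{1-\gamma}/\sigma!$. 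Third, the Cauchy--type estimate \eqref{main-estimate-two} just proved supplies $\|\#\mathcal F_\sigma\|_{\mathcal U,\mathcal V} \le \sigma!\,C_{\mathcal U,\mathcal V}^{|\sigma|}\,\|\wt f\|_{\mathcal V}^{|\sigma|}$.

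The crucial point, upon multiplying these three estimates, is that the planar forest factorial $\sigma!$ in the denominator of the second one exactly cancels the $\sigma!$ appearing in the third, producing a factorial--free bound
$$\bigl|\langle \wt{\mathbb X}_{st},\sigma\rangle\bigr|\cdot\|\#\mathcal F_\sigma\|_{\mathcal U,\mathcal V}\;\le\; \frac{1}{C_\gamma}\,K^{|\sigma|}\,(|\sigma|!)^{1-\gamma}\,|t-s|^{\gamma|\sigma|},$$
with $K := c\,C_\gamma\,C_{\mathcal U,\mathcal V}\,\|\wt f\|_{\mathcal V}$. I would then group the terms of the expansion by the number of vertices and invoke the defining property of a combinatorial Hopf algebra---the exponential bound $BD^n$ on the cardinality of the set of $A$--decorated planar forests with $n$ vertices---to arrive at
$$\|\mathbf Y_{st}\|_{\mathcal U,\mathcal V}\;\le\; \frac{B}{C_\gamma}\sum_{n\ge 0}(KD)^n\,(n!)^{1-\gamma}\,|t-s|^{\gamma n},$$
which is precisely a series of Gevrey type $1-\gamma$ in the variable $|t-s|^\gamma$, as claimed.

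The only conceptually subtle point is this cancellation of $\sigma!$: it is not accidental but is precisely the reason for defining $q_\gamma$ with the initial condition $q_\gamma(\sigma)=q(\sigma)$ rather than Gubinelli's $q_\gamma^{\smop{Gubinelli}}(\sigma)=1$, as noted in the remark following Theorem \ref{extension-generalized}. No further technical obstacle arises: the remainder is assembling the three bounds and summing a controlled geometric series. Note that in the Lipschitz case $\gamma=1$, the factor $(n!)^{1-\gamma}$ reduces to $1$, the dominating series becomes a plain geometric series in $|t-s|$, and one recovers genuine (not merely Gevrey) convergence of $\mathbf Y_{st}$ on a sufficiently small time interval, as announced in the introduction.
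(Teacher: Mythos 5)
Your proof is correct and follows essentially the same route as the paper: group the expansion $\mathbf Y_{st}=\sum_\tau\langle\wt{\mathbb X}_{st},\tau\rangle\#\mathcal F_\tau$ by degree, apply the three bounds \eqref{estimate-gamma-rough}, \eqref{estimate-qgamma-general} and \eqref{main-estimate-two}, observe the cancellation of $\tau!$, and bound the number of $A$-decorated planar forests of degree $k$ exponentially (the paper uses the explicit bound $(4d)^k$ where you invoke the abstract $BD^n$ from the combinatorial Hopf algebra axioms). Your observation about the role of the initial condition $q_\gamma(\sigma)=q(\sigma)$ in producing the cancellation is exactly the point of the remark following Theorem \ref{extension-generalized}.
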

\begin{proof}
Recall that a power series $\sum_{k\ge 0} b_kx^k$ is of Gevrey type $\beta\ge 0$ if and only if there exists a constant $C>0$ such that
\begin{equation}
|b_k|\le C^k(k!)^\beta.
\end{equation}
The series $\mathbf Y_{st}$ is given by $\sum_{k\ge 0}a_k$, with
\begin{equation*}
	a_k=\sum_{\tau\in F^A_{\smop{pl},k}}\langle \wt{\mathbb X}_{st},\,\tau\rangle\#\mathcal F_{\tau}.
\end{equation*}
We compute, using estimates \eqref{estimate-gamma-rough}, \eqref{estimate-qgamma-general}, \eqref{main-estimate-two}, and the majoration of the number of planar $A$-decorated rooted forests of degree $k$ by $(4d)^k$:
\begin{eqnarray*}
	\|a_k\|_{\mathcal U,\mathcal V}
	&=&\left\|\sum_{\tau\in F^A_{\smop{pl},k}}\langle \wt{\mathbb X}_{st},\,\tau\rangle\#\mathcal F_{\tau}\right\|_{\mathcal U,\mathcal V}\\
	&\le&\sum_{ \tau\in F^A_{\smop{pl},k}}\vert\langle \wt{\mathbb X}_{st},\, \tau\rangle\vert.\|\#\mathcal F_{ \tau}\|_{\mathcal U,\mathcal V}\\
	&\le &\sum_{ \tau\in F^A_{\smop{pl},k}}c^{|\tau|}q_\gamma(\tau)\|\#\Cal F_\tau\|_{\Cal U,\Cal V}\\
	&\le& \sum_{ \tau\in F^A_{\smop{pl},k}} c^{|\tau|}C_\gamma^{|\tau|}\frac{|\tau!|^{1-\gamma}}{ \tau !} \tau!|t-s|^{\gamma |\tau|}C_{\Cal U,\Cal V}^{|\tau|}\|\wt f\|_{\mathcal V}^{|\tau|}\\
	&\le& \left(4dcC_\gamma C_{\Cal U,\Cal V}\|\wt f\|_{\mathcal V}\right)^{k}(k!)^{1-\gamma}|t-s|^{\gamma k}.
\end{eqnarray*}
\end{proof}

\ignore{
Let $G_{\mathcal H}$ be the set of characters of the algebra $\mathcal H=\mathcal H^A_{\smop{MKW}}$, and let $\mathfrak g_{\mathcal H}$ be the set of infinitesimal characters of $\mathcal H$. There are two pro-unipotent group laws on $G_{\mathcal H}$, respectively given by the Grossman--Larson product $*$ (dual of the admissible cut coproduct) and the concatenation of forests denoted by a dot $\cdot$ (dual of the deconcatenation coproduct), corresponding to two Lie algebra structures $[\![-,-]\!]$ and $[-,-]$ on $\mathfrak g_{\mathcal H}$. The two following diagrams commute:
$$
	\xymatrix{\mathfrak g_{\mathcal H}\ar[r]^{\exp^*}\ar[d]^{\mathcal F} &G_{\mathcal H}\ar[d]^{\mathcal F}\\
C^{\omega}(\mathcal M,\mathfrak g)[[h]]\ar[r]^{\exp^*} &C^{\omega}(\mathcal M,G)[[h]]}
\hskip 12mm
	\xymatrix{\mathfrak g_{\mathcal H}\ar[r]^{\exp^\cdot}\ar[d]^{\mathcal F} &G_{\mathcal H}\ar[d]^{\mathcal F}\\
C^{\omega}(\mathcal M,\mathfrak g)[[h]]\ar[r]^{\exp^\cdot} &C^{\omega}(\mathcal M,G)[[h]]}
$$
On the bottom lines, $\exp^*$ is the Grossman--Larson exponential reflecting the (formal) flows of vector fields, whereas $\exp^\cdot$ is the pointwise exponential from $\mathfrak g$ to $G$. The righthand side diagram will be more manageable for our purposes: indeed, owing to the fact that the Eulerian idempotent $E=\log^\cdot(\mop{Id})$ is the projection defined by $E(\tau)=\tau$ if $\tau$ is a tree and $E(\tau)=0$ if $\tau=\mathbf 1$ or $\tau$ is a non-connected forest, we have
\begin{eqnarray*}
	\log^\cdot\mathbb Y_{st}
	&=&\mathbb Y_{st}\circ E\\
	&=&\sum_{\tau\in\mathcal T^A_{\smop{pl}}}\langle \wt{\mathbb X}_{st},\,\tau\rangle\tau.
\end{eqnarray*}
}

\begin{corollary}\label{Taylor-cv}
In the case when the driving path $X$ is Lipschitz, i.e., if $\gamma=1$, the norm $\|\mathbf Y_{st}\|_{\mathcal U,\mathcal V}$ is finite for small $h=t-s$.
\end{corollary}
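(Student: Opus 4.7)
The plan is to specialize the Gevrey-type bound from the previous corollary to the Lipschitz case $\gamma=1$, where the factorial penalty disappears and the majorizing series becomes a plain geometric series.

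First I would observe that the proof of the previous corollary produced the explicit bound
\begin{equation*}
\|a_k\|_{\mathcal U,\mathcal V}\le \bigl(4dc\,C_\gamma\,C_{\mathcal U,\mathcal V}\,\|\wt f\|_{\mathcal V}\bigr)^{k}\,(k!)^{1-\gamma}\,|t-s|^{\gamma k},
\end{equation*}
where $a_k$ is the degree-$k$ homogeneous part of $\mathbf Y_{st}=\sum_{k\ge 0}a_k$. Setting $\gamma=1$, the factor $(k!)^{1-\gamma}$ collapses to $1$ and $|t-s|^{\gamma k}=|t-s|^{k}$, leaving a strictly geometric majorant. Let $\kappa:=4dc\,C_1\,C_{\mathcal U,\mathcal V}\,\|\wt f\|_{\mathcal V}$, a finite positive constant depending only on the data $(f_j)$, the chart neighbourhoods $\mathcal U,\mathcal V$, and the Lie group $G$ acting on $\mathcal M$.

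Then for any $s,t$ with $\kappa|t-s|<1$ the series $\sum_{k\ge 0}\|a_k\|_{\mathcal U,\mathcal V}$ is dominated by the convergent geometric series $\sum_{k\ge 0}(\kappa|t-s|)^k=\bigl(1-\kappa|t-s|\bigr)^{-1}$. Since $\mathcal A_{\mathcal U}$ is a Banach space and $\mathcal L(\mathcal A_{\mathcal U},\mathcal A_{\mathcal V})$ with the norm $\|\cdot\|_{\mathcal U,\mathcal V}$ is complete, normal convergence implies that the partial sums of $\mathbf Y_{st}=\#\mathcal F(\mathbb Y_{st})$ converge to a bounded operator from $\mathcal A_{\mathcal U}$ to $\mathcal A_{\mathcal V}$, with
\begin{equation*}
\|\mathbf Y_{st}\|_{\mathcal U,\mathcal V}\le \frac{1}{1-\kappa|t-s|}<+\infty.
\end{equation*}

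This establishes the claim once $|t-s|<\kappa^{-1}$, i.e., on a small interval around any fixed time. No further obstacle arises because all the delicate analytic work, namely the Cauchy estimates on iterated vector fields and the factorial decay estimate for $q_\gamma$, has already been carried out; the Lipschitz hypothesis is exactly what allows these two $\tau!$-factors (one produced by Corollary~\ref{estimate-qgamma-comb} with exponent $1-\gamma$, the other by the operator bound \eqref{main-estimate-two}) to cancel, leaving a convergent geometric series rather than a divergent Gevrey one.
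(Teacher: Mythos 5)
Your proof is correct and is precisely the argument that the paper leaves implicit: the preceding corollary gives $\|a_k\|_{\mathcal U,\mathcal V}\le \kappa^k (k!)^{1-\gamma}|t-s|^{\gamma k}$, so at $\gamma=1$ the factorial factor disappears and the majorant is a geometric series, convergent for $|t-s|<\kappa^{-1}$. Nothing to add.
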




\section*{Appendix: The sewing lemma}
\label{sect:couture}

Let $S,T$ be two real numbers with $S<T$. A map $\Phi$ from $[S,T] \times [S,T]$ into a vector space $B$ is \textsl{additive} if it verifies the chain rule $\Phi(s,t)=\Phi(s,u)+\Phi(u,t)$ for any $s,u,t\in[S,T]$. In that case there obviously exists a map $\varphi:[S,T]\to B$, unique up to an additive constant, such that $\Phi(s,t)=\varphi(t)-\varphi(s)$. Indeed, choose an arbitrary origin $o \in [S,T]$ and set $\varphi(t):=\Phi(o,t)$.

\medskip

Loosely speaking, the sewing lemma stipulates that, under an appropriate completeness assumption on the vector space $B$, a {\sl{nearly}} additive map $(s,t)\mapsto\mu(s,t)$ is {\sl{nearly}} given by a difference $\varphi(t)-\varphi(s)$, in the sense that if $\mu(s,t)-\mu(s,u)-\mu(u,t)$ is small, then there is a unique $\varphi$, defined up to an additive constant, such that $\mu(s,t)-\varphi(t)+\varphi(s)$ is small. In view of the importance of this result, we give an account of it in the precised version given by Gubinelli, together with a detailed proof adapted from \cite{FP2006}. For the original proof, see \cite[Appendix A1]{Gubi2004}.

\begin{proposition}\cite[Proposition 1]{Gubi2004}\label{prop:fdpsewing}
Let $\mu$ be a continuous function on a square $[S,T] \times [S,T]$ with values in a Banach space $B$, and let $\varepsilon>0$. Suppose that there exist a positive integer $n$ and two collections $a_i,b_i\ge 0$ indexed by $i\in\{1,\ldots,n\}$, with $a_i+b_i=1+\varepsilon$, such that $\mu$ satisfies:
\begin{equation}
\label{fdpmu}
	\vert\mu(s,t)-\mu(s,u)-\mu(u,t)\vert\le \sum_{i=1}^nC_i\vert t-u\vert^{a_i}\vert u-s\vert^{b_i}
\end{equation}
for any $s,t,u\in[S,T]$ with $s\le u\le t$ or $t\le u\le s$, where the $C_i$'s are positive constants. Then there exists a function $\varphi \colon [S,T] \to B$, unique up to an additive constant, such that:
\begin{equation}\label{fdp-sewing}
	\vert\varphi(t)-\varphi(s)-\mu(s,t)\vert\le C'\vert t-s\vert^{1+\varepsilon},
\end{equation}
where the best constant in \eqref{fdp-sewing} is
$$	
	C':=\frac{1}{2^{1+\varepsilon}-2}\sum_{i=1}^n C_i.
$$
\end{proposition}

The proof, adapted from reference \cite{FP2006}, is based on dyadic decompositions of intervals. A sequence $(\mu_n)_{n\ge 0}$ of continuous maps from $[S,T] \times [S,T]$ into $B$ is defined by $\mu_0=\mu$ and
\begin{equation}
\label{mun}
	\mu_n(s,t):=\sum_{i=0}^{2^n-1}\mu(t_i,t_{i+1})
\end{equation}
with $t_i=s+i(t-s)2^{-n}$. Denoting by $C$ the sum $C_1+\cdots+ C_n$, the estimate
$$
	\vert\mu_{n+1}(s,t)-\mu_n(s,t)\vert\le C2^{-n\varepsilon -1-\varepsilon}\vert t-s\vert^{1+\varepsilon}
$$
holds, which is easily seen by applying \eqref{fdpmu} to each of the $2^n$ intervals in \eqref{mun}. Hence the sequence $(\mu_n)_{n\ge 0}$ is Cauchy in the complete metric space $\Cal C([S,T]^2,B)$ of continuous maps from $[S,T] \times [S,T]$ into $B$ endowed with the uniform convergence norm:
$$
	\|f\|:=\mop{sup}_{(s,t)\in[S,T]^2}\|f(s,t)\|_B,
$$
and thus converges uniformly to a limit $\Phi$, which moreover verifies:
\begin{equation}
\label{est-phi}
	\vert\Phi(s,t)-\mu(s,t)\vert\le 2^{-1-\varepsilon}C\vert t-s\vert^{1+\varepsilon}
	\sum_{n\ge 0}2^{-n\varepsilon}=C\vert t-s\vert^{1+\varepsilon}\frac{1}{2^{1+\varepsilon}-2}.
\end{equation}

\begin{lemma}
The limit $\Phi$ is additive, that is, it satisfies
$$
	\Phi(s,t)=\Phi(s,u)+\Phi(u,t)
$$
for any $s,u,t\in[S,T]$.
\end{lemma}

\begin{proof}
From $\mu_{n+1}(s,t)=\mu_n\big(s,(s+t)/2\big)+\mu_n\big((s+t)/2,t\big)$ we get that $\Phi$ is \textsl {semi-additive}, i.e., it satisfies
$$
	\Phi(s,t)=\Phi\big(s,(s+t)/2\big)+\Phi\big((s+t)/2,t\big)
$$
for any $s,t\in[S,T]$. Moreover, $\Phi$ is the unique semi-additive map satisfying estimates \eqref{est-phi}. Indeed, if $\Psi$ is another one, then
\begin{eqnarray*}
	\vert (\Phi-\Psi)(s,t)\vert
				&=&\left\vert\sum_{i=0}^{2^n-1}(\Phi-\Psi)(t_{i+1}-t_i)\right\vert\\
				&\le&2C'\sum_{i=0}^{2^n-1}\vert t_{i+1}-t_i\vert^{1+\varepsilon}\\
				&\le& 2C'\vert t-s\vert 2^{-n\varepsilon}
\end{eqnarray*}
with $C'=C/(2^{1+\varepsilon}-2)$. Hence $\Psi=\Phi$ by letting $n$ go to infinity. Now, if $r$ is any positive integer, then the map $\Psi_r$ defined by
$$
	\Psi_r(s,t)=\sum_{j=0}^{r-1}\Phi(t_j,t_{j+1}),
$$
with $t_j=s+j(t-s)/r$, is also semi-additive, hence $\Psi_r=\Phi$. From this we easily get
$$
	\Phi(s,t)=\Phi(s,u)+\Phi(u,t)
$$
for any rational barycenter $u$ of $s$ and $t$, i.e., such that $u=as+(1-a)t$ with $a\in[0,1]\cap\mathbb Q$. Additivity of $\Phi$ follows from continuity.
\end{proof}

The proof of Proposition \ref{prop:fdpsewing} follows by choosing $\varphi(t):=\Phi(o,t)$ for any fixed but arbitrary $o\in[S,T]$. Uniqueness of $\varphi$ up to an additive constant follows immediately from the uniqueness of the additive function $\Phi$ satisfying estimate \eqref{est-phi}.




\begin{thebibliography}{mmmmm}

\bibitem{A14}
M.~Al Kaabi,
\textsl{Monomial bases for free pre-Lie algebras},
S\'em.~Lotharingien de Combinatoire \textbf{71}, article B71b (2014).

\bibitem{Baudoin_04} 
F.~Baudoin,
An introduction to the geometry of stochastic flows, 
Imperial College Press 2004.

\bibitem{B17}
H.~Boedihardjo,
\textsl{Decay rate of iterated integrals of branched rough paths},
Ann.~Inst. Henri Poincar\'e C, Analyse non lin\'eaire {\bf{35}}, Issue 4, 945--969 (2018).

\bibitem{BS2016}
G.~Bogfjellmo, A.~Schmeding,
{\textsl{The tame Butcher group}}, 
Journal of Lie Theory {\bf 26}, No.~4, 1107--1144 (2016).

\bibitem{Brouder_00}
C.~Brouder, 
{\textsl{Runge-Kutta methods and renormalization}},
European Physical Journal {\bf{C12}}, 512--534 (2000).
\textcolor{blue}{
\bibitem{CEM2011}
D. Calaque, K. Ebrahimi-Fard, D. Manchon,
\textsl{Two interacting Hopf algebras of trees: a Hopf-algebraic approach to composition and substitution of B-series},
Advances in Applied Mathematics \textbf{47}, No2, 282--308 (2011).}

\bibitem{CW17}
T.~Cass, M.~Weidner,
\textsl{Tree algebras over topological vector spaces in rough path theory},
arXiv:1604.07352.

\bibitem{Chen54}
K.~T.~Chen,
\textsl{Iterated integrals and exponential homomorphisms},
Proc.~London Math.~Soc.~\textbf{4} No 3, 502--512 (1954).

\bibitem{Chen57}
K.~T.~Chen,
\textsl{Integration of paths, geometric invariants and a generalized Baker--Hausdorff formula},
Ann.~Math.~\textbf{65}, No 1, 163--178 (1957).

\bibitem{Chen67}
K.~T.~Chen,
\textsl{Iterated path integrals and generalized paths},
Bull.~Amer.~Math.~Soc.~\textsl{73}, 935--938 (1967).

\bibitem{Chen77}
K.~T.~Chen,
\textsl{Iterated path integrals},
Bull.~Amer.~Math.~Soc.~\textbf{83} No 5, 831--879 (1977).

\bibitem{CK1}
A.~Connes, D.~Kreimer, 
\textsl{Hopf algebras, Renormalization and Noncommutative Geometry}, 
Comm.~Math.~Phys.~{\bf{199}}, 203--242 (1998).

\bibitem{CMP}
J.~Cresson, D.~Manchon, J.~Palafox,
\textsl{Arborification, invariance and convergence of normalizing series},
work in progress.

\bibitem{CS2018}
C.~Curry, A.~Schmeding,
\textsl{Convergence of Lie group integrators},
submitted. arXiv:1807.11829

\bibitem{DS2016}
R.~Dahmen, A.~Schmeding,
\textsl{Lie Groups of controlled characters of combinatorial Hopf algebras}, 
Ann.~Inst. Henri Poincar\'e D (in press). arXiv:1609.02044

\bibitem{D08}
A.~Davie,
\textsl{Differential equations driven by rough paths: an approach via discrete approximation},
App.~Math.~Res.~eXpress 2008 (2008).

\bibitem{EM14}
K.~Ebrahimi-Fard, D.~Manchon,
\textsl{On an extension of Knuth's rotation correspondence to reduced planar trees},
J.~Noncomm.~Geom.~\textbf{8}, 303--320 (2014).

\bibitem{KAH2015}
K.~Ebrahimi-Fard, A.~Lundervold, H.~Munthe-Kaas,
{\it{On the Lie enveloping algebra of a post-Lie algebra}},
Journal of Lie Theory {\bf{25}}, no. 4, 1139--1165 (2015).

\bibitem{EFM17}
K.~Ebrahimi-Fard, F.~Fauvet, D.~Manchon,
\textsl{A comodule-bialgebra structure for word-series substitution and mould composition},
J.~Algebra {\bf{489}}, 552--581 (2017).

\bibitem{E92}
J.~Ecalle,
\textsl{Singularit\'es non abordables par la g\'eom\'etrie},
Ann.~Inst.~Fourier \textbf{42}, No 1-2, 73--164 (1992).

\bibitem{Emery_89}
M.~Emery,
Stochastic Calculus in Manifolds,
Universitext, (1989) Springer-Verlag Berlin Heidelberg.

\bibitem{FM17}
F.~Fauvet, F.~Menous,
\textsl{Ecalle's arborification-coarborification transforms and the Connes--Kreimer Hopf algebra}, 
Ann.~Sci.~\'Ec.~Norm. Sup.~\textbf{50}, No1, 39--83 (2017).

\bibitem{FP2006}
D.~Feyel, A.~de La Pradelle,
\textsl{Curvilinear Integrals along Enriched Paths},
Electronic J.~of Prob.~\textbf{34}, 860--892 (2006).

\bibitem{FPM2008}
D.~Feyel, A.~de La Pradelle, G. Mokobodzki,
\textsl{A non-commutative sewing lemma},
Electronic Comm.~in Prob.~\textbf{13}, 24--35 (2008).

\bibitem{F02}
L.~Foissy,
\textsl{Les alg\`ebres de Hopf des arbres enracin\'es d\'ecor\'es I,II},
Bull.~Sci.~Math.~\textbf{126}, 193--239 \& 249--288 (2002).

\bibitem{F16}
L.~Foissy,
\textsl{Commutative and non-commutative bialgebras of quasi-posets and applications to Ehrhart polynomials},
Adv.~Pure Appl.~Math. {\bf{10}}(1), 27--63 (2019). arXiv:1605:08310

\bibitem{MF17}
K.~F\o llesdal, H.~Munthe-Kaas, 
\textsl{Lie--Butcher series, Geometry, Algebra and Computation},
In: Ebrahimi-Fard K., Barbero Li\~n\'an M. (eds) Discrete Mechanics, Geometric Integration and Lie--Butcher Series. Springer Proceedings in Mathematics \& Statistics, {\bf{267}}. Springer, Cham. arXiv:1701.03654

\bibitem{Gubi2004}
M.~Gubinelli, 
\textsl{Controlling Rough Paths}, 
Journal of Functional Analysis {\bf{216}}, 86--140 (2004).

\bibitem{Gubi2010}
M.~Gubinelli, 
\textsl{Ramification of Rough Paths}, 
Journal of Differential Equations {\bf{248}}, no.~4, 693--721 (2010).

\bibitem{HaiKel2015}
M.~Hairer, D.~Kelly,
\textsl{Geometric versus non-geometric rough paths},
Ann.~Inst.~H.~Poincar\'e Probab.~Statist. {\bf{51}}, no.~1, 207--251 (2015).

\bibitem{HWL_02}
E.~Hairer, C.~Lubich, G.~Wanner, 
{\textsl{Geometric numerical integration}} Structure-preserving algorithms for ordinary differential equations,
vol. {\bf{31}}, Springer Series in Computational Mathematics. Springer-Verlag, Berlin, 2002.

\bibitem{H00}
M.~E.~Hoffman,
\textsl{Quasi-shuffle products},
J.~Alg.~Combin.~\textbf{11}, 49--68 (2000).

\bibitem{Iserles00}
A.~Iserles, H.~Z.~Munthe-Kaas, S.~P.~N{\o}rsett, A.~Zanna,
{\textsl{Lie-group methods}},
Acta Numerica {\bf{9}}, 215--365 (2000).

\bibitem{Jacod_79}
J.~Jacod,
\textsl {Calcul Stochastique et Probl\`emes de Martingales},
 Lecture Notes in Math.~{\bf 714}, Springer, Berlin (1979).
 
\bibitem{L98}
 T.~Lyons,
 \textsl{Differential equations driven by rough signals},
 Rev.~Mat.~Iberoamericana \textbf{14}, No.~2, 215--310 (1998).

\bibitem{LMK_13}
A.~Lundervold, H.~Z.~Munthe-Kaas,
{\textsl{On post-Lie algebras, Lie--Butcher series and moving frames}},
Foundations of Computational Mathematics {\bf{13}}, Issue 4,  583--613 (2013).

\bibitem{Manchon2011}
D.~Manchon,
{\em{A short survey on pre-Lie algebras}}, 
in Noncommutative Geometry and Physics: Renormalisation, Motives, Index Theory, ({\sc A.~Carey} ed.), E.~Schr\"odinger Institut Lectures in Math.~Phys., Eur.~Math.~Soc.~(2011).

\bibitem{MunWri2008}
H.~Munthe-Kaas, W.~M.~Wright,
\textsl{On the Hopf Algebraic Structure of Lie Group Integrators}
Foundations of Computational Mathematics {\bf{8}}, Issue 2, 227--257 (2008).

\bibitem{Murua}
A.~Murua, 
{\textsl{The Hopf algebra of rooted trees, free Lie algebras, and Lie series\/}}, 
Foundations of Computational Mathematics {\bf{6}}, issue 4, 387--426 (2006).

\bibitem{Protter_05}
P.~E.~Protter, 
Stochastic Integration and Differential Equations, 
Version 2.1, 2nd edn. Springer, Berlin (2005).

\bibitem{Ree58}
R.~Ree,
\textsl{Lie elements and an algebra associated with shuffles},
Ann.~Math.~\textbf{68}, No.~2, 210--220 (1958)
	
\bibitem{Sw69} 
M.~E.~Sweedler,
\textsl{Hopf algebras},
Benjamin, New-York (1969).

\bibitem{V2007}
B.~Vallette,
\textsl{Homology of generalized partition posets},
J.~Pure and Appl.~Alg.~\textbf{208}, No.~2, 699--725 (2007).

\end{thebibliography}
\end{document}